\documentclass[11pt]{article}

\usepackage{comment}
\usepackage{enumerate}
\usepackage{amsthm}
\usepackage{amsmath}
\usepackage{amssymb}
\usepackage{mathtools}
\usepackage{bbold,bbm}
\usepackage{hyperref}
\hypersetup{
    colorlinks,
    citecolor=red,
    filecolor=black,
    linkcolor=red,
    urlcolor=black
}

\usepackage{amsthm}

\usepackage{amsthm}

\providecommand{\keywords}
{
	\textbf{\textit{Keywords and phrases:}}
}
\providecommand{\amssubj}
{
	\textbf{\textit{AMS 2010 subject classification:}}
}

\newtheorem{theorem}{Theorem}[section]
\newtheorem{corollary}[theorem]{Corollary}
\newtheorem{lemma}[theorem]{Lemma}
\newtheorem{definition}[theorem]{Definition}
\newtheorem{remark}[theorem]{Remark}
\newtheorem{proposition}[theorem]{Proposition}

\newcommand{\N}{\ensuremath{\mathbb{N}}}

\newcommand{\norm}[1]{\left\lVert#1\right\rVert}
\newcommand{\normm}[1]{{\left\vert\kern-0.25ex\left\vert\kern-0.25ex\left\vert #1 
    \right\vert\kern-0.25ex\right\vert\kern-0.25ex\right\vert}}
\numberwithin{equation}{section}

\DeclareMathOperator{\Borel}{{\mathfrak B}}
\renewcommand{\d}{{\mathrm d}}
\newcommand{\abs}[1]{\left\lvert #1 \right\rvert}    

\newcommand{\scapro}[2]{\langle #1,#2\rangle}  
\DeclareMathOperator{\R}{{\mathbb R}}
\newcommand{\F}{{\mathcal F}}

\newcommand{\M}{\mathcal M}
\DeclareMathOperator{\Id}{{\mathrm{Id}}}
\newcommand{\drift}{F}
\newcommand{\diff}{G}

\allowdisplaybreaks

\newcommand{\1}{\mathbbm{1}} 
\renewcommand{\phi}{\varphi}
\allowdisplaybreaks

\newcommand{\ducp}{d^{\rm ucp}}
\newcommand{\dem}{d^{\rm em}}

\DeclareMathOperator*{\esssup}{ess\,sup}

\title{Stochastic evolution equations\\ driven by arbitrary cylindrical Lévy processes}

\makeatletter
\renewcommand{\thefootnote}{\fnsymbol{footnote}} 
\makeatother

\author{%
Gergely Bod\'o$^{1}$\thanks{Gergely Bod\'o was supported by the NWO wiskundeclusters grant \#613.009.151.}\and
Sonja Cox$^{1}$\thanks{Sonja Cox (ORCID ID: 0000-0002-9417-1542) was partially supported by the NWO grant VI.Vidi.213.070.}\and
Adam Jakubowski$^{2}$\and
Markus Riedle$^{3}$%
}

\date{%
\small
$^{1}$Korteweg--de Vries Institute for Mathematics, University of Amsterdam, 1090 GE Amsterdam, The Netherlands.\\
$^{2}$Faculty of Mathematics and Computer Science, Nicolaus Copernicus University, Toru\'n, Poland.\\
$^{3}$Department of Mathematics, King's College London, London WC2R 2LS, United Kingdom.
\texttt{markus.riedle@kcl.ac.uk}%
}

\begin{document}
\maketitle

\renewcommand{\thefootnote}{\arabic{footnote}}
\setcounter{footnote}{0}

\begin{abstract}
We establish the first existence and uniqueness result for mild solutions of abstract stochastic evolution equations driven by arbitrary cylindrical Lévy processes in Hilbert spaces. The coefficients are assumed to satisfy  global Lipschitz conditions, and no moment assumptions are imposed on the driving noise. The principal difficulty arises from the fact that cylindrical Lévy processes exist solely in a generalised sense and typically admit no semimartingale or Lévy–Itô decomposition, which precludes the use of classical existence methods. To overcome these obstacles, we develop a pathwise adaptive Euler–Peano approximation scheme based on noise-dependent stopping times and a fixed-point formulation of the mild solution operator. The resulting approach avoids stochastic calculus techniques relying on semimartingale decompositions and provides a robust and flexible framework for treating multiplicative cylindrical Lévy noise in infinite-dimensional systems.
\end{abstract}

\begin{flushleft}
	\amssubj{60H15, 60G20, 47D06, 60G51}
	
	\keywords{cylindrical Lévy processes, stochastic partial differential equations, mild solutions, adaptive Euler–Peano approximation}
\end{flushleft}

\section{Introduction}

Stochastic evolution equations in infinite-dimensional spaces arise naturally in the modelling of complex systems, such as fluid dynamics, quantum fields, and spatially extended phenomena. Random perturbations in these settings are often highly irregular and cannot be represented by genuine processes taking values in the state space. Cylindrical Lévy processes provide a natural framework for such noises: they generalise cylindrical Brownian motion to incorporate jumps and heavy tails, offering a rich class of driving signals for infinite-dimensional dynamics.

However, two intrinsic features of cylindrical Lévy processes make the analysis of stochastic evolution equations  particularly challenging. First, these processes exist only in a generalised sense and do not take values in the underlying space. Second, they typically lack a semimartingale or Lévy–Itô decomposition, which rules out classical stochastic calculus techniques. As a result, standard approaches—such as fixed-point arguments, Galerkin approximations, and variational methods—cannot be applied directly. In the multiplicative-noise setting, even embedding the equation into a larger space to recover a genuine Lévy process fails, a strategy that works for additive noise.

 In this paper, we establish the first existence and uniqueness result for an abstract stochastic evolution equation driven by an arbitrary cylindrical Lévy process, under global Lipschitz assumptions on the coefficients and without imposing moment conditions on the noise. This result goes  beyond previous work, which has been restricted to additive noise, such as Brzeźniak and Zabczyk \cite{brz_zab_10}, and Priola and Zabczyk \cite{priola_zab_11},  or to specific classes of the driving noise such as symmetric $\alpha$-stable cylindrical processes in  Bodó, Týbl, and Riedle \cite{b_t_r_2024spdes}, and Kosmala and Riedle \cite{KR}.

A first systematic study of cylindrical Lévy processes in Hilbert and Banach spaces was carried out by Applebaum and Riedle in \cite{Applebaum-Riedle}. A general integration theory with respect to arbitrary cylindrical Lévy processes was subsequently developed by Jakubowski and Riedle \cite{jakubowski_riedle_2017}, and later refined in joint work of Bodó and Riedle \cite{BR}. Both contributions are based on the decoupling method introduced by Kwapień and Woyczyński \cite{kwapien_woyczynski_1992}. Our approach in the current work makes essential use of this integration theory and requires several additional technical refinements, which are developed in the paper.

Our existence and uniqueness result is obtained via an adaptive Euler–Peano approximation scheme specifically designed for equations driven by cylindrical Lévy noise. The method is formulated in terms of the mild solution operator associated with the equation and can be viewed as a pathwise fixed-point approximation. Starting from the initial condition, the approximation is updated only at noise-dependent stopping times determined by the first exit of the current iterate from a prescribed neighbourhood of its image under the mild solution map. Between successive update times the approximation is frozen, while at each update it is reset according to the evolution prescribed by the equation. This construction, which goes back in spirit to the Euler–Peano approach described by Bichteler \cite[Chapter 5.4]{bichteler_2002} and is inspired by the pathwise solution theory for classical stochastic differential equations developed by Lowther \cite{Lowther}, avoids any reliance on semimartingale or Lévy–Itô decompositions and is therefore well suited to the cylindrical setting.
From a deterministic viewpoint, the resulting scheme can be seen as an infinite-dimensional analogue of the Peano–Carathéodory construction, or equivalently of Amann’s notion of $\epsilon$-approximate solutions for ordinary differential equations; see, for example, \cite[Chapter II.7]{Amann:1990}.

A key ingredient of our analysis is a stochastic Grönwall-type inequality due to Lowther \cite{Lowther}. For completeness, we include a proof of this result in Proposition~\ref{pro.ucp_convergence}. While several variants of stochastic Grönwall lemmas are available in the literature—see, for instance, the recent work \cite{SG24} and the references therein—their application typically relies on an explicit identification of the martingale and finite-variation components of the underlying stochastic integrals. This step is particularly delicate in the present setting, since cylindrical integrators generally do not admit a tractable semimartingale decomposition. In particular, a careful analysis of their jump structure is required, as carried out in \cite{b_t_r_2024spdes}.

The paper is organised as follows. Section~2 introduces the functional-analytic and probabilistic framework, including the space of adapted c\`adl\`ag processes, the metrics used in the analysis, and preparatory continuity results in an \'Emery-type topology. In Section~3, we recall the construction of stochastic integrals with respect to cylindrical L\'evy processes and establish auxiliary results, such as dominated convergence statements and quadratic variation identities in the cylindrical setting. Section~4 is devoted to abstract stochastic evolution equations driven by cylindrical L\'evy noise: we introduce the notion of a mild solution, develop a pathwise adaptive Euler--Peano approximation scheme based on noise-dependent stopping times, and, using a stochastic Gr\"onwall-type inequality, prove existence and uniqueness under standard Lipschitz assumptions.

\section{Semimartingales in Hilbert spaces }

Let $U, V$ and $H$ be separable Hilbert spaces equipped with inner products $\langle \cdot, \cdot \rangle$ and corresponding norms $\norm{\cdot}$. We identify each Hilbert space with its dual. The Borel $\sigma$-algebra on $H$ is denoted by $\Borel(H)$.

We denote by $L(U, H)$ the Banach space of bounded linear operators from $U$ to $H$, equipped with the operator norm $\norm{\cdot}_{U \rightarrow H}$. The subspace of Hilbert-Schmidt operators is denoted by $L_2(U, H)$, with the standard Hilbert-Schmidt norm $\norm{\cdot}_{\rm HS}$. For each $F \in L(U, H)$, we write $F^\dagger$ for the adjoint operator of $F$.

Let $(\Omega, \mathcal{A}, (\mathcal{F}_t)_{t \ge 0}, P)$ be a complete filtered probability space. We denote by $L_P^0(\Omega; H)$ the space of equivalence classes of $H$-valued measurable functions on $\Omega$, equipped with the topology of convergence in probability.

An $H$-valued stochastic process $(X(t): \, t \ge 0)$ is said to have c\`adl\`ag paths if its sample paths are right-continuous on $[0,\infty)$ and have left limits  on $(0, \infty)$. The definition of c\`agl\`ad paths is analogous. For such a process $X$, we define the running supremum process by
\[
X^*(t):=\sup_{s\leq t}\norm{X(s)}.
\] 
Given a stopping time $\tau$, the corresponding stopped process $X^\tau$ is defined by
$X^\tau(t)=X(\tau \wedge t)$ for all $t\ge 0$.

Let $T>0$ be fixed. For $H$-valued c\`adl\`ag processes $X$ and $Y$ define 
\[
 \ducp_T(X,Y):=  E\left[ \sup_{t\in [0,T]} \norm{X(t)-Y(t)}\wedge 1\right]
\] 
By identifying processes which are almost surely indistinguishable, $\ducp_T$ becomes a metric on the space 
of $H$-valued c\`adl\`ag processes on $[0,T]$.

Let $D_H$ denote the space of $H$-valued adapted c\`adl\`ag processes, which is equipped with the topology of  uniform convergence on compacts in probability, which is induced by the pseudo-metric
\[
\ducp (X,Y):=\sum_{k=1}^\infty \frac{1}{2^k} \ducp_k(X,Y). 
\]
By identifying processes which are almost surely indistinguishable, $\ducp$ becomes a metric on $D_H$, turning $(D_H, \ducp)$ into a complete metrisable topological vector space.

We provide three simple observations regarding convergence in $\ducp$. 
\begin{lemma}\label{le.ucp_consequence}
Let $(X_n)_{n \in \mathbb{N}}$ be a sequence of processes in $D_H$ converging to $0$ in $\ducp$. Then there exists a sequence $(\epsilon_n)_{n \in \mathbb{N}}$ of strictly positive numbers converging to $0$ such that, for each $T>0$, 
	\[ \lim_{n \to  \infty}P\left(X_n^*(T)>\epsilon_n\right)=0. \]
\end{lemma}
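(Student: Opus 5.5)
The plan is to reduce convergence in $\ducp$ to convergence of the quantities $a_n(k):=\ducp_k(X_n,0)$ for each fixed $k\in\N$, and then to choose $\epsilon_n$ by a diagonal argument.

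\emph{Step 1: reduction to each $k$.} Since $\ducp(X_n,0)=\sum_k 2^{-k}a_n(k)\to 0$ and every term is nonnegative, we get $a_n(k)\le 2^k\ducp(X_n,0)\to 0$ for each $k$. Moreover, $X_n^*(T)\le X_n^*(k)$ whenever $T\le k$. So it suffices to prove the claim for integer $T=k$.

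\emph{Step 2: a Markov bound.} For $0<\epsilon\le 1$ we have
\[
P\bigl(X_n^*(k)>\epsilon\bigr)\le \epsilon^{-1}E\bigl[X_n^*(k)\wedge 1\bigr]=\epsilon^{-1}a_n(k).
\]

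\emph{Step 3: choice of $\epsilon_n$.} Set $b_n:=\ducp(X_n,0)$, which tends to $0$, and define
\[
\epsilon_n:=\max\bigl(\sqrt{b_n},\,1/n\bigr)\wedge 1 .
\]
Then $\epsilon_n>0$ and $\epsilon_n\to 0$. For fixed $k$, $a_n(k)\le 2^k b_n$, so
\[
P\bigl(X_n^*(k)>\epsilon_n\bigr)\le \frac{2^k b_n}{\epsilon_n}\le 2^k\sqrt{b_n}\to 0 .
\]
In the case $b_n=0$ the bound is still valid, since then $a_n(k)=0$ and the probability vanishes. The truncation by $\wedge 1$ is harmless: if $\epsilon_n=1$, the Markov bound with $\epsilon=1$ gives the same estimate.

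There is no real obstacle here. The only point requiring care is that $\epsilon_n$ must be independent of $T$. This is exactly why we pass to the single summable quantity $b_n$, which controls all $k$ at once with the explicit constant $2^k$; a separate diagonalisation over $k$ is then unnecessary.
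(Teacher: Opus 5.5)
Your proof is correct, but it takes a different and more direct route than the paper.

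\textbf{The paper's route.} It first proves an abstract diagonal lemma: for any doubly indexed sequence $(a_{n,m})$ with $a_{n,m}\to 0$ as $n\to\infty$ for each fixed $m$, there is a sequence $m_n\to\infty$ with $a_{n,m_n}\to 0$. It applies this to $a_{n,m}=P(X_n^*(m)>m^{-1})$ and sets $\epsilon_n=1/m_n$. Then $P(X_n^*(T)>\epsilon_n)\le a_{n,m_n}$ as soon as $m_n\ge T$. This argument only uses that $X_n^*(m)\to 0$ in probability for each fixed $m$, and the resulting $\epsilon_n$ is not explicit.

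\textbf{Your route.} You observe that the metric $\ducp$ has already done the diagonalisation. The single number $b_n=\ducp(X_n,0)$ controls every horizon through $E[X_n^*(k)\wedge 1]\le 2^k b_n$. Markov's inequality with the explicit threshold $\epsilon_n=\max(\sqrt{b_n},1/n)$ then gives the quantitative bound $P(X_n^*(k)>\epsilon_n)\le 2^k\sqrt{b_n}$.

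\textbf{Comparison.} Your version is shorter, constructive, and comes with a rate. The paper's version is more abstract: it would apply verbatim to any countable family of quantities converging in probability, regardless of how they are weighted in a metric.

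\textbf{A minor remark.} Since $b_n\le\sum_k 2^{-k}=1$ and $1/n\le 1$, the truncation $\wedge 1$ in your definition of $\epsilon_n$ is never active, so the case distinction for it is unnecessary. The estimate you claim in that case, $2^k b_n\le 2^k\sqrt{b_n}$, holds precisely because $b_n\le 1$, which you use without stating it.
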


\begin{proof}
	Let $(a_{n,m})_{n,m\in \N}$ be a doubly indexed $\R$-valued sequence with $\lim_{n\rightarrow \infty} a_{n,m} = 0$ for all $m\in \N$. We claim that there exists a sequence $(m_n)_{n\in \N}$ tending to infinity such that $\lim_{n\rightarrow \infty} a_{n,m_n}=0$. Indeed, by rescaling we can assume $\sup_{n\in\N} | a_{n,1} | \leq 1$.
    By assumption, for all $m\in \N$ there exists an $N_m\in \N$ such that $N_m = \inf\{ n\in \N : \sup_{k\geq n} | a_{k,m} |\leq  \frac{1}{m}\}$. As $N_1=1$, we can define $m_n \in \N$ by $m_n = \sup\{\ell \in \{1,\ldots,n\} : N_\ell \leq n\}$. Note that $(m_n)_{n\in \mathbb{N}}$ is non-decreasing. Moreover, $\lim_{n\rightarrow \infty}m_n= \infty$, since if we had $\sup_{n\in \N} m_n = M$ for some $M \in \mathbb{N}$, then  $N_{M+1}=\infty$, which leads to a contradiction. Finally, $|a_{n,m_n}|\leq \frac{1}{m_n}$ by definition of $m_n$, which confirms our claim. The lemma now follows from applying this observation to $a_{n,m}:= P(X_n^*({m})>m^{-1})$.
\end{proof}

\begin{lemma}\label{le.bounded_in_prob}
	Let $(\xi_n)_{n \in \mathbb{N}}$ be a sequence of $H$-valued random variables such that 
	$$ \lim_{n \rightarrow \infty}\lambda_n \xi_n =0\qquad\text{in }L_P^0(\Omega; H)$$
	 for every real sequence $(\lambda_n)_{n \in \mathbb{N}}$ converging to $0$ as $n\rightarrow \infty$. Then the collection $\{\xi_n:n \in \mathbb{N}\}$ is bounded in probability, i.e., for all $\epsilon>0$ there exists an $m>0$ such that $\sup_{n\in \N} P(\| \xi_n \|>m)<\epsilon$.
\end{lemma}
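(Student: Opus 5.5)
I argue by contradiction, choosing a sequence $(\lambda_n)$ that defeats the hypothesis. Suppose the family $\{\xi_n:n\in\N\}$ is not bounded in probability. Then there is an $\epsilon>0$ such that for every $m>0$ we have $\sup_{n\in\N}P(\norm{\xi_n}>m)\ge\epsilon$. Each single random variable $\xi_n$ is finite almost surely, so $P(\norm{\xi_n}>m)\to0$ as $m\to\infty$ for each fixed $n$. The same holds for any finite subfamily. Hence, for every $m$ and every finite set $F\subset\N$, the supremum over $n\notin F$ still satisfies $\sup_{n\notin F}P(\norm{\xi_n}>m)\ge\epsilon$, at least once $m$ exceeds a threshold depending on $F$; for smaller $m$ this follows by monotonicity in $m$. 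This lets me pick indices recursively.

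\textbf{Construction.} I take $m_k=k^2$ and choose strictly increasing indices $n_1<n_2<\cdots$ with
\[
P\bigl(\norm{\xi_{n_k}}>k^2\bigr)\ge\epsilon/2 .
\]
At step $k$, the finite set to avoid is $\{1,\dots,n_{k-1}\}$. Now define $\lambda_{n_k}=1/k$ and $\lambda_n=0$ for $n\notin\{n_k\}$. This sequence converges to $0$. However,
\[
P\bigl(\norm{\lambda_{n_k}\xi_{n_k}}>1\bigr)=P\bigl(\norm{\xi_{n_k}}>k\bigr)\ge P\bigl(\norm{\xi_{n_k}}>k^2\bigr)\ge\epsilon/2 ,
\]
so $\lambda_n\xi_n$ does not tend to $0$ in probability along the subsequence $(n_k)$. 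This contradicts the hypothesis. A simpler choice $\lambda_{n_k}=1/k$ with threshold $m_k=k$ also works directly.

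\textbf{Main obstacle.} The only delicate point is ensuring the chosen indices can be taken strictly increasing. If they could not, the $\lambda$'s would be assigned inconsistently, or the indices would stay bounded and the argument would collapse. The per-variable tightness handles this: $P(\norm{\xi_n}>m)\to0$ as $m\to\infty$ for each $n$. So for large $m$ the finitely many excluded indices contribute less than $\epsilon/2$, and the supremum condition forces a new index beyond them.
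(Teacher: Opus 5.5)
Your proof is correct and follows the same contradiction argument as the paper: choose indices $n_k$ along which $P(\norm{\xi_{n_k}}>k)$ stays bounded below, and scale by $1/k$. You also make explicit two steps the paper leaves implicit: tightness of each single $\xi_n$ lets you choose the $n_k$ strictly increasing, and setting $\lambda_n=0$ off the subsequence turns this into a sequence to which the hypothesis applies.
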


\begin{proof}
	Suppose that $(\xi_n)_{n\in \N}$ is not bounded in probability. Then there exists an $\epsilon >0$ and a sequence $(n_m)_{m\in \N}$ of integers tending to infinity such that $P(\|\xi_{n_m}\|>m)>\epsilon$. In particular, $(m^{-1} \xi_{n_m})_{m\in \N}$ fails to converge to $0$ in $L_P^0(\Omega; H)$, which contradicts the assumption.
\end{proof}

\begin{lemma}\label{lem:sum_of_processes}
Let $(X_n)_{n\in \N}$ be a sequence of adapted $H$-valued c\`adl\`ag processes on $[0,T]$ such that $ \lim_{n\rightarrow \infty} \ducp_T(X_n,0)=0$. Then there exists a subsequence $(X_{n_k})_{k\in \N}$ such that $\sum_{k\in \N} \| X_{n_k}\|$ is an adapted real-valued c\`adl\`ag process.
\end{lemma}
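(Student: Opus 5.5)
Since $\ducp_T(X_n,0)\to 0$, I will first extract a subsequence along which the running suprema $X_n^*(T)$ are summable almost surely. Once the series of suprema converges, the series of norms converges uniformly in $t$ on $[0,T]$. Uniform limits of adapted c\`adl\`ag processes are adapted and c\`adl\`ag, which gives the claim.

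\emph{Step 1.} By Markov's inequality, $P(X_n^*(T)\wedge 1 > \delta)\le \delta^{-1}\ducp_T(X_n,0)$ for every $\delta\in(0,1)$. I therefore choose indices $n_1<n_2<\cdots$ such that $\ducp_T(X_{n_k},0)\le 4^{-k}$. This gives
\[
P\bigl(X_{n_k}^*(T)>2^{-k}\bigr)\le 2^k\,4^{-k}=2^{-k}.
\]
By the Borel--Cantelli lemma there is a null set $N$ such that, for every $\omega\notin N$, we have $X_{n_k}^*(T)(\omega)\le 2^{-k}$ for all $k\ge k_0(\omega)$. Hence $\sum_k X_{n_k}^*(T)(\omega)<\infty$ for $\omega\notin N$.

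\emph{Step 2.} Put $S_m(t):=\sum_{k=1}^m\norm{X_{n_k}(t)}$. Each $S_m$ is real-valued, adapted and c\`adl\`ag, because $\norm{\cdot}$ is continuous and each $X_{n_k}$ is adapted and c\`adl\`ag. For $\omega\notin N$,
\[
\sup_{t\in[0,T]}\bigl|S(t)-S_m(t)\bigr|\le \sum_{k>m}X_{n_k}^*(T)\to 0,
\]
so $S_m\to S:=\sum_k\norm{X_{n_k}}$ uniformly on $[0,T]$. A uniform limit of c\`adl\`ag functions is c\`adl\`ag, by the usual $\epsilon/3$ argument for right limits and for the existence of left limits.

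\emph{Step 3: adaptedness.} This needs a little care. For each $t$, $S(t)$ is the pointwise limit of the $\mathcal F_t$-measurable random variables $S_m(t)$ on $\Omega\setminus N$. Since the probability space is complete, $\mathcal F_t$ contains $N$, provided the filtration is augmented, which I will assume is part of the standing setup (the definition of $D_H$ modulo indistinguishability implicitly requires this). I then redefine $S:=0$ on $N$, or equivalently take $\limsup_m S_m$ truncated where it is infinite. This makes $S$ everywhere finite, adapted and c\`adl\`ag, and indistinguishable from the series.

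The only potential obstacle is this null-set issue. If the filtration were not assumed to contain the null sets, one would instead define $S(t):=\liminf_m S_m(t)$ wherever it is finite, which is $\mathcal F_t$-measurable, and conclude adaptedness up to indistinguishability.
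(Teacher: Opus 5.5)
Your proof is correct and follows the paper's argument: extract a subsequence with $P(X_{n_k}^*(T)>2^{-k})\le 2^{-k}$, apply Borel--Cantelli to get $\sum_k X_{n_k}^*(T)<\infty$ a.s., and conclude that the partial sums converge uniformly on $[0,T]$ to a c\`adl\`ag limit. Your Step 3 makes explicit the null-set and adaptedness point that the paper leaves implicit; since the filtration satisfies the usual conditions, $N\in\mathcal F_0$ and setting the sum to $0$ on $N$ settles it, so your fallback paragraph is unnecessary (and on its own it would not give everywhere c\`adl\`ag paths).
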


\begin{proof}
By ucp convergence, there exists a subsequence $(X_{n_k})_{k\in \N}$ such that 
\begin{equation*}P(X_{n_k}^*(T)>2^{-k})<2^{-k},
\end{equation*}
so that the Borel-Cantelli lemma implies that $\sum_{k\in \N} X_{n_k}^*(T)<\infty$ almost surely.

For $\omega$ in this almost sure event, define
	\[
	S_m(t,\omega):=\sum_{k=1}^m \|X_{n_k}(t,\omega)\|,
	\qquad t\in[0,T].
	\]
	Each $S_m(\cdot,\omega)$ is càdlàg, being a finite sum of càdlàg functions. Moreover, for $n\ge m$,
	\[
	\sup_{t\in[0,T]} |S_n(t,\omega)-S_m(t,\omega)|
	\le \sum_{k=m+1}^n X_{n_k}^*(T,\omega).
	\]
	Since $\sum_k X_{n_k}^*(T,\omega)<\infty$, the sequence $(S_m(\cdot,\omega))_{m\in\N}$ is uniformly Cauchy on $[0,T]$. Therefore it converges uniformly to
	$\sum_{k=1}^\infty \|X_{n_k}(t,\omega)\|$ for all $t\in[0,T]$, which completes the proof.
\end{proof}

We introduce another metric on $D_H$. To this end, fix $T>0$ and let  $\mathcal{S}_{{\rm prd}}^{1, {\rm op}}:=\mathcal{S}_{{\rm prd}}^{1, {\rm op}}(H)$ denote the space of all processes $(\Gamma(t):\ t\in [0,T])$ of the form
\begin{align}\label{eq.simple-op-1}
	\Gamma (\omega, t)=\Gamma_0(\omega)\mathbb{1}_{\{0\}}(t)+\sum_{i=1}^{n-1} \left(\sum_{k=1}^{N(i)}O_{i,k}\mathbbm{1}_{A_{i,k}}(\omega)\right) \mathbbm{1}_{ (t_i,t_{i+1}]}(t),
\end{align}
where $\Gamma_0$ is an $\mathcal{F}_0$-measurable $L(H)$-valued random variable, $0=t_1<\cdots < t_{n}=T$, $A_{i,k} \in \mathcal{F}_{t_i}$ and $O_{i,k} \in L(H)$ for all $i=1,...,n-1$ and $k=1,...,N(i)$, such that 
\begin{align*}
	\sup_{(\omega,t)\in \Omega \times [0,T]}\norm{\Gamma(\omega,t)}_{H \rightarrow H}\leq 1.
\end{align*}
The stochastic integral process of a simple process $\Gamma$ of the form \eqref{eq.simple-op-1} with respect to any $H$-valued, c\`adl\`ag process $(X(t):\, t\in [0,T])$ is defined by 
\[
\int_{(0,t]} \Gamma(s)\, \d X(s):= \sum_{i=1}^{n-1}  \left(\sum_{k=1}^{N(i)}O_{i,k}\mathbbm{1}_{A_{i,k}}(\omega)\right)\big(X(t_{i+1}\wedge t)- X(t_i\wedge t)\big). 
\]
We introduce for $H$-valued c\`adl\`ag processes $X$ and $Y$  on $[0,T]$ the mapping 
\[
\dem_T(X,Y):=  \sup_{\Gamma \in \mathcal{S}_{{\rm prd}}^{1, {\rm op}}}E\Bigg[\sup_{t\in [0,T]} 
\norm{\Gamma(0)(X-Y)(0)+ \int_{(0,t]} \Gamma(s)\, \d(X-Y)( s)}\wedge 1 \Bigg]. 
\]
By identifying processes which are almost surely indistinguishable, $\dem_T$ becomes a metric
on the space  of $H$-valued c\`adl\`ag processes on $[0,T]$. Hence, we obtain a metric on $D_H$ by 
\[
\dem (X,Y):=\sum_{k=1}^\infty \frac{1}{2^k} \dem_k(X,Y). 
\]

The metric $\dem$ is defined analogously to the metric associated with the semimartingale topology, also known as the \'Emery topology, on the space of real-valued semimartingales, originally introduced in \cite{Emery}. In contrast to \cite{Emery}, where the metric is defined via a supremum over all measurable, bounded functions,  we construct the metric using the class of simple processes $\mathcal{S}_{{\rm prd}}^{1, {\rm op}}$. This approach not only avoids the non-separability of $L(H)$, the space of bounded linear operators on $H$, but also simplifies the definition of the stochastic integral in the following section. For similar reasons, we restrict attention to deterministic time intervals when defining simple processes, rather than allowing random intervals. In the real-valued case, both choices lead to equivalent metrics; see, for example, \cite{Karandikar-Rao}.

Obviously, by taking $\Gamma(\omega,t)=\Id_H\1_{(0,T]}(t) $ for all $t\in [0,T]$ and $\omega\in\Omega$,  where $\Id_H$ is the identity on $H$, it follows that 
$d_T^{em}(X,Y)\to 0$ implies $d_T^{ucp}(X,Y)\to 0$. Consequently, $\dem$ induces a topology on $D_H$ stronger than $\ducp$.  

The metric $\dem_T$, defined for c\`adl\`ag processes on $[0, T]$, generates the same topology on $D_H$ even when formulated without taking a supremum over time. This observation, established in the following lemma, enables us to relate the present results to those in our earlier work \cite{BR}.

\begin{lemma}\label{le.Emery-ucp}
Let $T>0$. On the space of \(H\)-valued, adapted, c\`adl\`ag processes on \([0,T]\), define a quasi-metric \(\rho_T^{\mathrm{em}}\) by setting, for each pair of processes \(X,Y\) in this space,
\[
\rho_T^{em}(X,Y)=  \sup_{\Gamma \in \mathcal{S}_{{\rm prd}}^{1, {\rm op}}}E\Bigg[
\norm{\Gamma(0)(X-Y)(0)+ \int_{(0,T]} \Gamma(s)\, \d(X-Y)( s)}\wedge 1 \Bigg].
\]
Then $\rho_T^{\rm em}$ and $d_T^{\rm em}$ generate the same topology on the space of $H$-valued, adapted, c\`adl\`ag processes on $[0, T]$.
\end{lemma}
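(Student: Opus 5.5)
One inequality is immediate. Since the supremum over $t\in[0,T]$ dominates the value at $t=T$, we have $\rho_T^{\rm em}(X,Y)\le d_T^{\rm em}(X,Y)$. Hence convergence in $d_T^{\rm em}$ implies convergence in $\rho_T^{\rm em}$. Both quantities depend only on $X-Y$ and are translation invariant, so it suffices to show the following. If $(Z_n)$ is a sequence of adapted càdlàg processes with $\rho_T^{\rm em}(Z_n,0)\to 0$, then $d_T^{\rm em}(Z_n,0)\to 0$. (Strictly speaking, the topologies are generated by the balls of the quasi-metric, so I would phrase everything via sequences and neighbourhoods of $0$ in this translation-invariant setting.)

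The plan is a stopping-time argument that converts a uniform-in-time bound into an endpoint bound. Fix $\epsilon\in(0,1)$ and $\Gamma\in\mathcal{S}_{\rm prd}^{1,{\rm op}}$ with grid $0=t_1<\dots<t_n=T$. Write $M(t)=\Gamma(0)Z(0)+\int_{(0,t]}\Gamma\,\d Z$, a càdlàg adapted process, and set $\tau=\inf\{t\ge0:\norm{M(t)}>\epsilon\}\wedge T$. I want a $\Gamma'\in\mathcal{S}_{\rm prd}^{1,{\rm op}}$ such that $\Gamma'(0)Z(0)+\int_{(0,T]}\Gamma'\,\d Z=M(\tau)$. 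Then
\[
P\bigl(M^*(T)>\epsilon\bigr)\le P\bigl(\norm{M(\tau)}\ge\epsilon\bigr)\le \epsilon^{-1}\rho_T^{\rm em}(Z,0),
\]
and so $E[M^*(T)\wedge1]\le\epsilon+\epsilon^{-1}\rho_T^{\rm em}(Z,0)$, uniformly in $\Gamma$. Taking the supremum over $\Gamma$ and letting first $n\to\infty$ and then $\epsilon\to0$ would finish the proof.

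The main obstacle is that $\Gamma\1_{(0,\tau]}$ is not of the allowed form, because the simple processes in $\mathcal{S}_{\rm prd}^{1,{\rm op}}$ use deterministic time intervals only. I would handle this by discretising $\tau$. Refine the grid to a fine deterministic partition $\pi=\{s_j\}\supseteq\{t_i\}$ and let $\tau_\pi=\inf\{s_j\in\pi: s_j\ge\tau\}$. The event $\{\tau_\pi> s_j\}=\{\tau>s_j\}$ lies in $\mathcal{F}_{s_j}$. Hence $\Gamma_\pi:=\Gamma(0)\1_{\{0\}}+\sum_j \Gamma(s_{j+1})\1_{\{\tau>s_j\}}\1_{(s_j,s_{j+1}]}$ is again simple, with coefficients of the form $O_{i,k}\1_{A_{i,k}\cap\{\tau>s_j\}}$, has operator norm at most $1$, and satisfies $\Gamma_\pi(0)Z(0)+\int_{(0,T]}\Gamma_\pi\,\d Z=M(\tau_\pi)$. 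This realises the stopped value only at the discretised time $\tau_\pi$, not at $\tau$.

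To close the gap I would take a sequence of refining dyadic partitions $\pi_m$, so that $\tau_{\pi_m}\downarrow\tau$ with $\tau_{\pi_m}>\tau$ on $\{\tau<T\}$. By right-continuity of $M$, $M(\tau_{\pi_m})\to M(\tau)$ almost surely. On $\{M^*(T)>\epsilon\}$ we have $\tau<T$, and also $\norm{M(\tau)}\ge\epsilon$ by right-continuity. To get a strict inequality, I would define $\tau$ with threshold $\epsilon/2$ and then estimate $P(M^*(T)>\epsilon)\le\liminf_m P(\norm{M(\tau_{\pi_m})}>\epsilon/4)$. Each of these probabilities is at most $(4/\epsilon)\rho_T^{\rm em}(Z,0)$, by Markov's inequality applied to $\norm{\cdot}\wedge1$. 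This yields the bound above with harmless constants, and the argument is uniform in $\Gamma$.
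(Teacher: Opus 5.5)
Your proof is correct and is essentially the paper's argument. Both realise the stopped value through a simple integrand $\Gamma\1_{A}$, where $A\in\F_{s_j}$ is a ``not yet exceeded'' event on a deterministic grid, and both pass to refining partitions via right-continuity and Fatou; the paper stops in discrete time along the partition (first index $k$ with $\norm{S_k}\ge c$) after bounding $d_T^{\rm em}$ by suprema over partition points, while you stop in continuous time and discretise $\tau$ from above.

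One small point: $\{\tau>s_j\}\in\F_{s_j}$ needs right-continuity of the filtration, which the paper only assumes from Section~3 on, whereas its discrete-time stopping never needs it. Replacing $\{\tau>s_j\}$ by $\{\tau\ge s_j\}$ removes this requirement, since $\{\tau<s_j\}=\bigcup_{q\in\mathbb{Q}\cap[0,s_j)}\{\norm{M(q)}>\epsilon/2\}\in\F_{s_j}$. Then $\tau_\pi$ becomes the first grid point strictly after $\tau$ (or $T$), which also makes your claim $\tau_{\pi_m}>\tau$ on $\{\tau<T\}$ literally true.
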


\begin{proof}
Let $(\pi_n)_{n\in\N}$ be a sequence of partitions of $(0,T]$ satisfying $\lim_{n\rightarrow \infty} |\pi_n|=0$. By Fatou's lemma and the fact that the integral process is c\`adl\`ag, it holds for all $X,Y$ in the space of $H$-valued, adapted, c\`adl\`ag processes on $[0,T]$, that 
\begin{align*}
d_T^{\rm em}(X,Y)&= \sup_{\Gamma \in \mathcal{S}_{{\rm prd}}^{1, {\rm op}}}E\Bigg[ \sup_{t\in [0,T]}
\norm{\Gamma(0)(X-Y)(0)+ \int_{(0,t]} \Gamma(s)\, \d(X-Y)( s)}\wedge 1 \Bigg]
\\ & \quad \leq 
\sup_{\Gamma \in \mathcal{S}_{{\rm prd}}^{1, {\rm op}}}
\liminf_{n\rightarrow \infty}
E\Bigg[ \sup_{t\in \pi_n}
\norm{\Gamma(0)(X-Y)(0)+ \int_{(0,t]} \Gamma(s)\, \d(X-Y)( s)}\wedge 1 \Bigg],
\\ & \quad \leq 
\liminf_{n\rightarrow \infty}
\sup_{\Gamma \in \mathcal{S}_{{\rm prd}}^{1, {\rm op}}}E\Bigg[ \sup_{t\in \pi_n}
\norm{\Gamma(0)(X-Y)(0)+ \int_{(0,t]} \Gamma(s)\, \d(X-Y)( s)}\wedge 1 \Bigg],
\end{align*}
whereas the reverse inequality follows trivially. It thus suffices to prove that for every partition $\pi$, the quasi-metric defined for each $X,Y$ in the space of $H$-valued, adapted, c\`adl\`ag processes on $[0,T]$, given by
\begin{align*}
& \sup_{\Gamma \in \mathcal{S}_{{\rm prd}}^{1, {\rm op}}}E\Bigg[ \sup_{t\in \pi}
\norm{\Gamma(0)(X-Y)(0)+ \int_{(0,t]} \Gamma(s)\, \d(X-Y)( s)}\wedge 1 \Bigg],
\end{align*}
generates the same topology as $\rho_T^{\rm em}$ on the space of $H$-valued, adapted, c\`adl\`ag processes on the interval $[0,T]$.
\par
Let $Z_0$ be an $\mathcal{F}_0$-measurable $H$-valued random variable, and $Z_i$ be $\F_{t_{i+1}}$-measurable $H$-valued random variables for some $0\le t_1\le \dots \le t_{n}\le T$ and $i=1,\dots, n-1$. Define the partial sums $S_k:=\sum_{i=0}^k Z_i$ for $k=0,1,\dots,n-1$. 
	Letting
	\[
	A_i:=\Big\{\omega\in\Omega:\ \sup_{k\in\{0,\dots,i-1\}} \norm{S_k(\omega)}< c \Big\},\qquad i=1,\dots,n-1,
	\]
	we obtain $A_i\in \F_{t_i}$ for all $i=1,\dots,n-1$. Let
	\[
	\tau:=\inf\Big\{k\in\{0,1,\dots,n-1\}:\ \norm{S_k}\ge c\Big\},
	\]
	with the convention $\tau=\infty$ if the set is empty. On the event $\{\tau<\infty\}$ we have $\1_{A_i}=1$ for all $i\le \tau$ and $\1_{A_i}=0$ for all $i>\tau$, and therefore
	\[
	\sum_{i=1}^{n-1} \1_{A_i} Z_i = \sum_{i=1}^{\tau} Z_i.
	\]
	In particular,
	\[
	\Big\{\sup_{k\in\{0,1,\dots,n-1\}} \norm{S_k}\ge c \Big\}
	= \{\tau<\infty\}
	\subseteq
	\Big\{ \norm{Z_0+\sum_{i=1}^{n-1} \1_{A_i} Z_i}\ge c \Big\},
	\]
	and hence
	\[
	P\Big( \sup_{k\in\{0,1,\dots, n-1\}} \norm{\sum_{i=0}^k  Z_i}\ge c \Big)
	\le  P\Big( \norm{Z_0+\sum_{i=1}^{n-1} \1_{A_i} Z_i}\ge c  \Big).
	\]
	
	Now let $\Gamma\in \mathcal{S}_{{\rm prd}}^{1, {\rm op}}$; let $0\leq t_1 \leq \ldots \leq t_n \leq T$ and let $\xi_0$ be an $L(H)$-valued $\mathcal{F}_0$-measurable random variable, and $\xi_1,\ldots,\xi_{n-1}$ be $L(H)$-valued $\F_{t_i}$-measurable simple random variables with
	$
	\Gamma(\omega,t) = \xi_0\mathbb{1}_{0}(t)+\sum_{i=1}^{n-1} \xi_i(\omega)\, \mathbbm{1}_{(t_i,t_{i+1}]}(t).
	$
	For any $H$-valued, adapted, c\`adl\`ag processes $X$ and $Y$ on $[0, T]$, define $Z_0:=\xi_0(X-Y)(0)$, and
	\[
	Z_i:= \xi_i\big((X-Y)(t_{i+1})-(X-Y)(t_i)\big),\qquad i=1,\dots,n-1.
	\]
	Since $A_i\in \F_{t_i}$, the process
	$
	\tilde{\Gamma}(\omega,t)
	= \xi_0(\omega)\mathbb{1}_{0}(t)+
	\sum_{i=1}^{n-1} \xi_i(\omega)\, \1_{A_i}(\omega)\, \1_{(t_i,t_{i+1}]}(t)
	$
	belongs to $\mathcal{S}_{{\rm prd}}^{1, {\rm op}}$, and we have
	\[
	\sum_{i=1}^{n-1} \1_{A_i} Z_i
	=
	\int_{(0,T]} \tilde{\Gamma}(s)\, d(X-Y)(s).
	\]
	Applying the inequality above yields the desired conclusion by the equivalent characterization of the topology of convergence in probability.
\end{proof}

An adapted, $H$-valued stochastic process $(X(t):\, t\ge 0)$ with c\`adl\`ag trajectories is called a semimartingale if it admits a decomposition $X=M+A$, where $M$ is a locally square-integrable, $H$-valued martingale, and $A$ is a stochastic process in $H$ with paths of locally finite variation $P$-almost surely. 
Semimartingales in Hilbert spaces are studied in detail in \cite{met82}, to which we refer for the results used in this work. In particular, stochastic integration with respect to $H$-valued semimartingales is understood in the sense introduced in \cite{met82}. 

\begin{lemma}\label{le.cont_of_semimartingales}
    Let $H$ and $V$ be separable Hilbert spaces,  $B \in L(H,V)$, and let $X$ be an $H$-valued semimartingale. 
    Then, for every $0\le p\le q$ and $S\in \F_p$, we have 
         \[d_T^{em}\left(\1_S \1_{(p,q]}BX,0\right)\leq {\max\{\norm{B}_{H\to V}, 1\} }\, d_T^{em}\big(X,0\big).\]
\end{lemma}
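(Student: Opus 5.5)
Throughout, I read $\1_S\1_{(p,q]}BX$ as the $V$-valued semimartingale $Y(t):=\int_{(0,t]}\1_S\1_{(p,q]}(s)B\,\d X(s)=\1_S\,B\big(X(t\wedge q)-X(t\wedge p)\big)$. This is the natural reading in view of how the lemma is used later. Then $Y(0)=0$. The plan is to show that every integral of a simple $V$-operator-valued process against $Y$ equals an integral of a simple $H$-operator-valued process against $X$, up to a scaling factor bounded by $\max\{\norm{B},1\}$.

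\textbf{Step 1: rewrite the integral.} Fix $\Gamma\in\mathcal{S}_{\rm prd}^{1,{\rm op}}(V)$ with partition $0=t_1<\dots<t_n=T$ and coefficients $\xi_i=\sum_k O_{i,k}\1_{A_{i,k}}$. I refine the partition by inserting the points $p$ and $q$; this does not change the integral. Because $Y$ does not move outside $(p,q]$, we get
\[
\int_{(0,t]}\Gamma\,\d Y=\sum_{i:\,(t_i,t_{i+1}]\subseteq(p,q]}\xi_i\1_S B\big(X(t_{i+1}\wedge t)-X(t_i\wedge t)\big).
\]
The coefficient $\xi_i\1_S B$ is an $L(H,V)$-valued simple random variable. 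It is $\F_{t_i}$-measurable, since $S\in\F_p\subseteq\F_{t_i}$ for $t_i\ge p$. Its norm is at most $\norm{B}$.

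\textbf{Step 2: pass to $H$-valued processes.} The test class in $d^{em}_T(X,0)$ consists of $L(H)$-valued processes, not $L(H,V)$-valued ones, so this is the main obstacle. I handle it as follows.
\begin{itemize}
\item I pick an isometry $J:H\to V$ onto a subspace, or a coisometry in the reverse direction if $\dim V<\dim H$. Then I write $\xi_i\1_SB=\|B\|\,\eta_i\,\Theta$, where $\eta_i$ is a simple contraction-valued random variable.
\item A cleaner route is to use the norm identity: for $u\in V$, $\norm{u}=\sup$ over a countable set of unit vectors $e$ of $|\langle u,e\rangle|$. This does not commute well with the expectation of a supremum, however.
\item The simplest fix is the polar decomposition. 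For each deterministic operator $O_{i,k}B\in L(H,V)$, write $O_{i,k}B=W_{i,k}\,|O_{i,k}B|$, where $W_{i,k}$ is a partial isometry and $|O_{i,k}B|\in L(H)$ has norm at most $\norm{B}$.
\item Then $\norm{\sum_i\sum_k W_{i,k}\1_{A_{i,k}\cap S}|O_{i,k}B|\,\Delta_iX}$ must be compared with $\norm{\sum_i\tilde\xi_i\Delta_iX}$. Here $\tilde\xi_i$ is $L(H)$-valued, so the varying partial isometries $W_{i,k}$ still have to be absorbed.
\end{itemize}

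\textbf{Step 3: absorb the partial isometries by embedding.} To absorb the $W_{i,k}$ I embed into a common space. Since $V$ and $H$ are separable, $V$ is isometrically isomorphic to a closed subspace of $H$ or to a space containing $H$. If $\dim V\le\dim H$, I take an isometry $\iota:V\to H$. Then
\[
\norm{\textstyle\int\Gamma\,\d Y}=\norm{\iota\int\Gamma\,\d Y}=\norm{\textstyle\int\tilde\Gamma\,\d X},\qquad \tilde\Gamma:=\iota\,\Gamma\,\1_S\1_{(p,q]}B\in L(H),
\]
and $\tilde\Gamma/\max\{\norm{B},1\}$ belongs to $\mathcal{S}_{\rm prd}^{1,{\rm op}}(H)$. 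In the opposite case, $\dim V>\dim H$, which means $H$ is finite-dimensional. There $\int\Gamma\,\d Y$ takes values in the finite-dimensional range of the operators applied to $\Delta X$, so one can restrict to $\overline{\mathrm{span}}$ of the ranges. This is the step I expect to need the most care.

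\textbf{Step 4: scaling.} It remains to handle the constant. For $c=\max\{\norm B,1\}\ge1$ I use the elementary inequality $E[\norm{cZ}\wedge1]\le c\,E[\norm{Z}\wedge1]$. Taking the supremum over $t\le T$ and then over $\Gamma$ gives the claim.
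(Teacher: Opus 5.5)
In the case $\dim V\le\dim H$, which holds automatically when $H$ is infinite-dimensional, your Step~3 is precisely the paper's proof. You embed $V$ isometrically into $H$ via $\iota$, observe that $\iota\,\1_S\1_{(p,q]}\Gamma B/M\in\mathcal{S}_{\rm prd}^{1,{\rm op}}(H)$ with $M=\max\{\norm{B},1\}$, and use $(Mz)\wedge 1\le M(z\wedge 1)$. The detours in Step~2 (coisometries, polar decomposition) are not needed.

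The gap is the case $\dim V>\dim H$, and it cannot be closed, because the inequality is false there. Passing to the closed span $W$ of the ranges of the operators $O_{i,k}B$ does not help. The $O_{i,k}\in L(V)$ may rotate $\operatorname{Ran}B$ out of itself, so $\dim W$ can exceed $\dim H$, and there is still no isometry $W\to H$. The paper's reduction ``take $V=\operatorname{Ran}(B)$'' has the same defect: $\dem_T$ computed with test operators in $L(\overline{\operatorname{Ran}B})$ can be strictly smaller than with test operators in $L(V)$.

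Here is a counterexample. Let $H=\mathbb{R}$, $V=\mathbb{R}^2$, $Bx=(x,0)$, so $M=1$, and take $S=\Omega$, $p=0$, $q=T=3$. Let $\epsilon_1,\epsilon_2$ be independent Rademacher variables, $c=2^{-1/2}$, and $X(t)=c\epsilon_1\1_{[1,\infty)}(t)+c\epsilon_2\1_{[2,\infty)}(t)$ with its natural filtration.

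A real test integrand multiplies the first jump by a deterministic $\delta_1$ and the second jump by a $\sigma(\epsilon_1)$-measurable $\delta_2$, where $|\delta_1|,|\delta_2|\le 1$. Set $u=|\delta_1|$ and $v=|\delta_2|$. Conditionally on $\epsilon_1$, the running supremum of the integral equals either $c(u+v)$ or $c\max\{u,|u-v|\}\le c$, each with probability $\tfrac12$. Hence $\dem_3(X,0)\le\tfrac12(1+c)<1$.

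On the other hand, take $\Gamma=\Id_V\1_{(0,3/2]}+R\,\1_{(3/2,3]}$, where $R$ is the rotation by $\pi/2$. This gives $\int_{(0,t]}\Gamma\,\d(BX)=c\epsilon_1e_1\1_{[1,\infty)}(t)+c\epsilon_2e_2\1_{[2,\infty)}(t)$, whose norm at $t=3$ is $c\sqrt2=1$. So $\dem_3(BX,0)=1>M\,\dem_3(X,0)$.

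The lemma therefore needs the additional hypothesis $\dim V\le\dim H$. Under that hypothesis your argument is complete.
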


\begin{proof}
As $B\in L(H,V)$, we can assume, by taking $V=\operatorname{Ran}(B)$, that $\operatorname{dim}(V)\leq \operatorname{dim}(H)$.
Let $i:V\rightarrow H$ be an isometric embedding and set $M:=\max\{\norm{B}_{H\to V}, 1\}$.    For each $\Gamma \in \mathcal{S}_{{\rm prd}}^{1, {\rm op}}(V)$, it follows that $i\, \1_S \1_{(p,q]}\Gamma B/M\in \mathcal{S}_{{\rm prd}}^{1, {\rm op} }(H)$. Hence, 
\begin{align*}
    d_T^{em}\big(\1_S \1_{(p,q]}BX,0\big)&=\sup_{\Gamma \in \mathcal{S}_{{\rm prd}}^{1, {\rm op}}(V)}E\Bigg[
\norm{ \int_{(0,T]} \Gamma(s)\, \d\big(\1_S \1_{(p,q]}BX\big)( s)}_V\wedge 1 \Bigg]\\
&=\sup_{\Gamma \in \mathcal{S}_{{\rm prd}}^{1, {\rm op}}(V)}E\Bigg[
\norm{ \int_{(0,T]} \1_{(p,q]}\1_S\Gamma(s) \, \d (BX)(s)}_V\wedge 1 \Bigg]\\
&=\sup_{\Gamma \in \mathcal{S}_{{\rm prd}}^{1, {\rm op}}(V) }E\Bigg[
\norm{ \int_{(0,T]} i\,\1_{(p,q]}\1_S\Gamma(s)B\, \d X(s)}_H\wedge 1 \Bigg]\\
&\leq M\sup_{\Delta \in \mathcal{S}_{{\rm prd}}^{1, {\rm op}}(H)}E\Bigg[
\norm{ \int_{(0,T]} \Delta(s)\, \d X(s)}_H\wedge 1 \Bigg]\\
&\leq\, M\, d_T^{em}(X,0),
\end{align*}
from which our claim immediately follows.
\end{proof}

Let $X$ be an $H$-valued semimartingale. Its (scalar) quadratic variation is defined 
as the real-valued stochastic process $([X](t):t\ge 0)$ given by
\begin{align}\label{def.quad_var}
	[X](t):=\norm{X(t)}^2 -\norm{X(0)}^2-2 \int_{(0,t]} \scapro{X(s-)}{\cdot}\, \d X(s) \qquad\text{for all }t\ge 0.
\end{align}
The quadratic variation $[X]$ is an increasing, right-continuous and adapted process with $[X](0)=0$; see \cite[Th. 26.5]{met82}. Since the paths are monotone, $[X]$ is of bounded variation on finite intervals. For any $H$-valued semimartingales $X$ and $Y$, we may introduce the notion of covariation via the polarisation identity by setting
\[[ X,Y ]:=\frac{1}{4}\left([X+Y]-[X-Y]\right).\]
Similarly to the real-valued case, it can be shown that the covariation is bilinear. 

The following result on the continuity of the quadratic variation (and covariation) in the metric $\dem$ in the real-valued case can be found in \cite[Prop.\ 2.10]{kardaras2013closure}. A similar argument yields the following extension to the Hilbert space-valued setting:
\begin{lemma}\label{le.cont_of_quad_variation}
	Let  $(X_n)_{n \in \mathbb{N}}$ be a sequence of $H$-valued semimartingales, and let $X$ be an $H$-valued semimartingale. Then, for every $T>0$, the following implication holds:
    \[\lim_{n\rightarrow \infty}\dem_T(X_n,X)= 0 \quad\Rightarrow \quad \lim_{n\rightarrow \infty}\dem_T([X_n],[X])=0. \]
\end{lemma}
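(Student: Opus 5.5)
We follow the real-valued argument of \cite[Prop.\ 2.10]{kardaras2013closure}, lifted to $H$ through the definition \eqref{def.quad_var}. Set $Y_n:=X_n-X$, so $\dem_T(Y_n,0)\to0$. Bilinearity of the covariation gives
\[
[X_n]-[X]=[Y_n]+2[Y_n,X].
\]
Since $\dem_T$ is built from the $F$-norm $d\mapsto \dem_T(d,0)$, which is subadditive, it suffices to show $\dem_T([Y_n],0)\to0$ and $\dem_T([Y_n,X],0)\to0$.

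\emph{Step 1: $[Y_n]$.} Because $[Y_n]$ is increasing, for every $\Gamma$ with $\norm{\Gamma}\le1$ the integral $\int\Gamma\,\d[Y_n]$ is dominated by the total variation $[Y_n](T)$. Hence $\dem_T([Y_n],0)\le E\big[[Y_n](T)\wedge1\big]$, and it is enough to show $[Y_n](T)\to0$ in probability. By \eqref{def.quad_var},
\[
[Y_n](T)=\norm{Y_n(T)}^2-\norm{Y_n(0)}^2-2\int_{(0,T]}\scapro{Y_n(s-)}{\cdot}\,\d Y_n(s).
\]
The first two terms tend to $0$ in probability, since $\dem_T$-convergence implies $\ducp_T$-convergence. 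The stochastic integral is the real difficulty.

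\emph{Step 2: the integral term.} First localise. By Lemma~\ref{le.ucp_consequence} choose $\epsilon_n\downarrow0$ with $P(Y_n^*(T)>\epsilon_n)\to0$, and put $\sigma_n:=\inf\{t:\norm{Y_n(t)}>\epsilon_n\}$. On $[0,\sigma_n]$ the integrand $\scapro{Y_n(s-)}{\cdot}$ is predictable with norm at most $\epsilon_n$. Then approximate it by simple integrands. On a fine partition, the Riemann sums $\sum_i\scapro{Y_n(t_i)}{\cdot}\1_{\{t_i<\sigma_n\}}$ of norm $\le\epsilon_n$ can be written as $\epsilon_n$ times an element $\Gamma\in\mathcal S^{1,\rm op}_{\rm prd}$, composed with a fixed unit vector to pass from $L(H,\R)$ to $L(H)$. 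This is similar to the embedding trick in Lemma~\ref{le.cont_of_semimartingales}.

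For such $\Gamma$, the definition of $\dem_T$ gives
\[
E\Big[\Big|\epsilon_n^{-1}\!\int\epsilon_n\Gamma\,\d Y_n\Big|\wedge1\Big]\le \dem_T(Y_n,0)=:\delta_n .
\]
Choosing $\epsilon_n$ so that additionally $\delta_n/\epsilon_n\to0$ is not automatic. Instead, I would use the scaling trick: for fixed $c>0$,
\[
P\Big(\Big|\int\epsilon_n\Gamma\,\d Y_n\Big|>c\Big)\le P\Big(\Big|\int\Gamma\,\d Y_n\Big|>c\Big),
\]
and $\epsilon_n\le1$ eventually, so this is at most $\delta_n/(c\wedge1)\to0$ for each $c$. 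The Riemann sums converge in probability to the stochastic integral for fixed $n$ (Métivier's theory \cite{met82}), so the bound passes to the limit. Finally, $P(\sigma_n\le T)\to0$ removes the localisation, and $[Y_n](T)\to0$ in probability.

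\emph{Step 3: the covariation.} Use a Kunita–Watanabe type inequality for the total variation of $[Y_n,X]$:
\[
\int_{(0,T]}|\d[Y_n,X]|\le [Y_n](T)^{1/2}\,[X](T)^{1/2}.
\]
I would prove it via polarisation and positivity of $[Y_n+\lambda X]$ for rational $\lambda$. The right-hand side tends to $0$ in probability by Step 1, so as in Step 1 the $\dem_T$-norm of $[Y_n,X]$ vanishes.

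The main obstacle I expect is Step 2: making the uniform approximation of the predictable integrand by elements of $\mathcal S^{1,\rm op}_{\rm prd}$ rigorous, uniformly in $n$. Here the deterministic-time restriction in the definition of $\dem$, together with Lemma~\ref{le.Emery-ucp}, should allow handling the stopping at $\sigma_n$ through the $\1_{A_i}$ construction of that proof.
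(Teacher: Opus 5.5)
Your proposal is correct and has the same skeleton as the paper's proof. You control $\dem_T$ of the finite-variation process $[X_n]-[X]$ by $E[{\rm TV}(\cdot)(T)\wedge1]$; the paper cites Kardaras for this, while you check it directly. You then use the Kunita--Watanabe bound ${\rm TV}[Y_n,X](T)\le\sqrt{[Y_n](T)[X](T)}$, and your increment argument via $[Y_n+\lambda X]$ is a sound way to get it. Next you reduce to $[Y_n](T)\to0$ in probability via \eqref{def.quad_var}. Finally you bound $\int_{(0,T]}\scapro{Y_n(s-)}{\cdot}\,\d Y_n(s)$ by testing the definition of $\dem_T$ against a localized, norm-bounded integrand.

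The genuine difference is the localization. The paper passes to a subsequence for which $\xi=\sum_k\norm{X_{n_{m_k}}}$ is c\`adl\`ag (Lemma~\ref{lem:sum_of_processes}) and stops at a fixed level $\ell$ of $\xi$, paying a factor $\max\{\ell,1\}$. It then lets $k\to\infty$ and $\ell\to\infty$ and concludes with the subsequence principle. You instead stop at the vanishing level $\epsilon_n$ from Lemma~\ref{le.ucp_consequence}, the device the paper itself uses in Proposition~\ref{pro.ucp_convergence}. The localized integrand then has norm at most $\epsilon_n\le1$, so the single bound $\dem_T(Y_n,0)$ applies to every $n$, and neither a subsequence nor a second limit is needed. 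Your route is therefore shorter.

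For the same reason, your detour through $\delta_n/\epsilon_n$ and the scaling trick is unnecessary. The integrand $\scapro{Y_n(s-)}{\cdot}\1_{(0,\sigma_n]}(s)\,e$ is already admissible, so you get directly
\[
E\Bigl[\Bigl|\int_{(0,T]}\scapro{Y_n(s-)}{\cdot}\1_{(0,\sigma_n]}(s)\,\d Y_n(s)\Bigr|\wedge1\Bigr]\le\dem_T(Y_n,0).
\]
The approximation by elements of $\mathcal{S}_{{\rm prd}}^{1, {\rm op}}$ is needed only for each fixed $n$; uniformity in $n$ comes from the supremum in $\dem_T$. Lemma~\ref{le.Emery-ucp} plays no role, since $\1_{\{t_i<\sigma_n\}}$ is already $\F_{t_i}$-measurable. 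When removing the localization, use $\{\sigma_n<T\}\subseteq\{Y_n^*(T)>\epsilon_n\}$ rather than $P(\sigma_n\le T)\to0$.
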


\begin{proof} 
Let ${\rm TV}(f)(t)$ denote the total variation of the function $f\colon [0,\infty)\to\R$ on $[0,t]$.
Since $\dem_T([X_n],[X])=\dem_T([X_n]-[X],0)$, and the sequence $([X_n]-[X])_{n\in \mathbb{N}}$ consists of real-valued semimartingales of finite variation, it follows from \cite[Pr.\ 2.7]{kardaras2013closure} that
\begin{equation}\label{eq.Emery_finite_var}
    \lim_{n\rightarrow \infty}E\left[\big(
{\rm TV}\left([X_n]-[X]\right)(T)\big)\wedge 1 \right]=0 \quad \text{implies}\quad \lim_{n\rightarrow \infty}\dem_T([X_n],[X])=0.
\end{equation}
Moreover, as in the real-valued case, it follows from the characterisation of covariation as a limit over partitions (see \cite[Th.\ 26.5]{met82}) and Hölder’s inequality that the real-valued covariation process of two $H$-valued semimartingales $Y$ and $Z$ satisfies $\big([Y,Z](t)\big)^2 \le [Y](t) [Z](t)$ for all $ t \ge 0$. From this, we deduce, that
${\rm TV}[Y,Z](T)\le \sqrt{[Y](T)}\sqrt{[Z](T)}$.
Using this inequality and bilinearity of the covariation, we get for each  $t\in [0,T]$ that
\begin{align*}
       {\rm TV}\left([X_n]-[X]\right)(t)&={\rm TV}\left([X_n-X]+2[X,X_n-X])\right)(t)\\
       &\leq [X_n-X](T)+2 \sqrt{[X](T)}\sqrt{[X_n-X](T)}\,\quad a.s.
\end{align*}
Consequently, it is sufficient by \eqref{eq.Emery_finite_var} to show  that $[X_n-X](T)$ converges to $0$ in probability. 
For this, we may assume $X=0$ and it remains  to show that 
\[\lim_{n\rightarrow \infty} E[[X_n](T)\wedge 1]=0.\]
    
    As $\dem_T(X_n,0)\to 0$ implies $\ducp_T(X_n,0)\to 0$, it follows that $X_n(0)$ and $X_n(T)$ both converge to $0$ in probability. Hence, by Equation \eqref{def.quad_var}, it remains only to show that $(I_n(T))_{n\in\N}$ converges to $0$ in probability, where 
	\[I_n(T):=\int_{(0,T]} \langle X_n(s-),\cdot\rangle\,{\rm d}X_n(s).\]
	By the subsequence principle, it suffices to show that any subsequence has a further subsequence that converges to $0$ in $L_P^0(\Omega;\mathbb{R})$. To this end, let $(I_{n_m}(T))_{m \in \mathbb{N}}$ be a subsequence of $(I_n(T))_{n\in \mathbb{N}}$.  Since our assumptions imply that $\dem_T(X_{n_m},0)\to 0$ as $m\rightarrow \infty$, it follows that $\ducp_T(X_{n_m},0)\to 0$ as $m\rightarrow \infty$. By Lemma~\ref{lem:sum_of_processes} there exists a subsequence  $(X_{n_{m_k}})_{k \in \mathbb{N}}$ such that the sum $\xi(t)=\sum_{k=1}^\infty \norm{X_{n_{m_k}}(t)}$ defines a real-valued c\`adl\`ag process. 
    
   Using $\xi$, we define a sequence $(\tau_\ell)_{\ell \in \mathbb{N}}$ of stopping times by 
    \[\tau_\ell:=\inf\left\{t\in [0,T]:{\xi(t)}>\ell\right\} \quad \text{for all }\ell \in \mathbb{N},\]
    where we note that since $\xi$ is c\`adl\`ag, we have $P(\cup_{\ell=1}^{\infty}\{\tau_\ell=\infty\})=1$.
    Fix $e \in H$ with $\norm{e}=1$, and define for each $k,\ell \in \mathbb{N}$ a process
    \[Y_{k,\ell}(t):=\langle \ell^{-1}X_{n_{m_k}}(t-)\mathbb{1}_{(0,\tau_\ell \wedge T]}(t),\cdot\rangle \,e \qquad \text{for all }t \in [0,T].\]
Clearly, for each $k,\ell \in \mathbb{N}$ we have that $Y_{k,\ell}(t)\in L(H)$ for all $t \in [0,T]$, moreover, it follows from the very definition of $\tau_\ell$ that $\sup_{t \in [0,T]}\norm{Y_{k,\ell}(t)}_{H \rightarrow H}\leq 1$. Approximating $Y_{k,\ell}$ by a sequence $(S_{k,\ell,r})_{r \in \mathbb{N}} \subseteq\mathcal{S}_{{\rm prd}}^{1,{\rm op}}$ and using \cite[Th. 26.3]{met82}, we get
	\begin{align*}
		E[\vert I_{n_{m_k}}(\tau_\ell \wedge T)\vert\wedge 1]&\leq \max\{\ell,1\}\,E\left[\left\vert \int_{(0,T]} \langle \ell^{-1}X_{n_{m_k}}(s-)\mathbb{1}_{(0,\tau_\ell]}(s),\cdot\rangle\,{\rm d}X_{n_{m_k}}(s) \right\vert\wedge 1\right]\\
        &=\max\{\ell,1\}\,E\left[\left(\left\vert \int_{(0,T]} \langle \ell^{-1}X_{n_{m_k}}(s-)\mathbb{1}_{(0,\tau_\ell]}(s),\cdot\rangle\,{\rm d}X_{n_{m_k}}(s) \right\vert \norm{e}\right)\wedge 1\right]\\
        &=\max\{\ell,1\}\,E\left[\norm{ \int_{(0,T]} Y_{k,\ell}(s)\,{\rm d}X_{n_{m_k}}(s) }\wedge 1\right]\\
        &=\max\{\ell,1\}\lim_{r \rightarrow \infty}E\left[\norm{ \int_{(0,T]} S_{k,\ell,r}(s)\,{\rm d}X_{n_{m_k}}(s) }\wedge 1\right]\\
    &\leq \max\{\ell,1\}\sup_{\Gamma \in \mathcal{S}_{{\rm prd}}^{1,{\rm op}}}E\left[\norm{ \int_{(0,T]}  \Gamma(s)\,{\rm d}X_{n_{m_k}}(s) }\wedge 1\right],
	\end{align*}
which, since $\dem_T(X_{n_{m_k}},0)\to 0$ as $k\rightarrow \infty$, implies that
\[\lim_{k \rightarrow \infty}E[\vert I_{n_{m_k}}(\tau_\ell \wedge T)\vert\wedge 1]=0\qquad \text{for all }\ell \in \mathbb{N}.\]
From this, we can deduce that
\begin{align*}
    \lim_{k \rightarrow \infty}E[\vert I_{n_{m_k}}(T)\mathbb{1}_{\{\tau_\ell=\infty\}}\vert\wedge 1]&=\lim_{k \rightarrow \infty}E[\vert I_{n_{m_k}}(\tau_\ell\wedge T)\mathbb{1}_{\{\tau_\ell=\infty\}}\vert\wedge 1]\\
    &\leq \lim_{k \rightarrow \infty}E[\vert I_{n_{m_k}}(\tau_\ell \wedge T)\vert\wedge 1]=0 \qquad \text{for all }\ell \in \mathbb{N}.
\end{align*}
Since $P(\cup_{\ell=1}^{\infty}\{\tau_\ell=\infty\})=1$, or equivalently $P(\cap_{\ell=1}^{\infty}\{\tau_\ell=\infty\}^c)=0$, we have
\[\lim_{\ell\rightarrow \infty}P(\{\tau_\ell=\infty\}^c)=0.\]
Let $\epsilon>0$ be fixed. By using the simple estimate
\[E[\vert I_{n_{m_k}}(T)\vert\wedge 1]\leq E[\vert I_{n_{m_k}}(T)\mathbb{1}_{\{\tau_\ell=\infty\}}\vert\wedge 1] +P(\{\tau_\ell=\infty\}^c),\]
we first choose $\ell^*$ large enough so that $P(\{\tau_{\ell^*}=\infty\}^c)<\epsilon/2$. Then, having fixed $\ell^*$, we now take $k$ to be large enough so that $E[\vert I_{n_{m_k}}(T)\mathbb{1}_{\{\tau_{\ell^*}=\infty\}}\vert\wedge 1]<\epsilon/2$. Since $\epsilon>0$ was arbitrary, we may conclude that
\[\lim_{k \rightarrow \infty}E[\vert I_{n_{m_k}}(T)\vert\wedge 1]=0.\]
Since the argument holds for an arbitrary subsequence $(I_{n_m}(T))_{m \in \mathbb{N}}$, the subsequence principle allows us to conclude that $\lim_{n \rightarrow \infty}E[\vert I_{n}(T)\vert\wedge 1]=0$, which finishes the proof of our claim.
\end{proof}

\section{Cylindrical L\'evy processes and their stochastic integrals}

A cylindrical random variable $Z$ in a Hilbert space $U$ is a linear and continuous mapping $Z\colon U \rightarrow L_{P}^0(\Omega;\mathbb{R})$. 
Its characteristic function is defined as 
\[\varphi_Z\colon U \rightarrow \mathbb{C}, \qquad\varphi_Z(u)=E\big[e^{iZu}\big].\]

Let $\F=(\F_t)_{t\ge 0}$ be a filtration on $(\Omega,\mathcal{A},P)$ satisfying the usual conditions. 
A family $(L(t):t\geq 0)$ of cylindrical random variables $L(t)\colon U \rightarrow L_{P}^0(\Omega;\mathbb{R})$ is called a cylindrical Lévy process if, for every $n \in \mathbb{N}$ and every $u_1,...,u_n \in U$, the  $\mathbb{R}^n$-valued stochastic process
\[\big(\big(L(t)u_1,...,L(t)u_n\big): t \geq 0\big)\]
is a Lévy process with respect to $\F$.  

The characteristic function of $L(t)$ is, for each $t\geq 0$, of the form
\[\varphi_{L(t)}:U\rightarrow \mathbb{C}, \qquad \varphi_{L(t)}(u)=\exp\left(tS(u)\right),\]
where the mapping $S:U \rightarrow \mathbb{C}$  is given by
\[S(u)=ia(u)-\frac{1}{2}\langle Qu,u \rangle+ \int_U\left(e^{i\langle u,h\rangle}-1-i\langle u,h \rangle \mathbb{1}_{B_\mathbb{R}}(\langle u,h \rangle)\right)\, \lambda({\rm d}h).\]
Here,  $a:U\rightarrow \mathbb{R}$ is a continuous mapping with $a(0)=0$, and $Q:U \rightarrow U$ is a positive and symmetric operator. The set function $\lambda\colon \mathcal{Z}_\ast(U)\to [0,\infty]$ is defined on the algebra
\begin{align*}
	\mathcal{Z}_\ast(U)\!:=\big\{ \{u\in U:\! (\scapro{u}{u_1^\ast},\dots, \scapro{u}{u_n^\ast})\in A\}:
	u_1^\ast, \dots, u_n^\ast\in U,  A\in\Borel(\R^n\setminus\{0\}), n\in\N    \big\}, 
\end{align*} 
and satisfies for each $C:=\{u\in U:\! (\scapro{u}{u_1^\ast},\dots, \scapro{u}{u_n^\ast})\in A\}\in \mathcal{Z}_\ast(U)$ that 
\[
\lambda(C)= \lambda_{u_1,\dots, u_n}(A),
\]
where $\lambda_{u_1,\dots, u_n}$ is the L\'evy measure of $\big(\big(L(t)u_1,...,L(t)u_n\big): t \geq 0\big)$. The triplet $[a,Q,\lambda]$ is called the characteristics of $L$. 
See \cite{Applebaum-Riedle} and  \cite{riedle2011infinitely} for this and related results.

Stochastic integration with respect to cylindrical Lévy processes was introduced in the recent work \cite{BR} by two of us. In the present paper, however, we require additional properties of the stochastic integral that, while well established in the case of genuine Lévy processes, must be carefully verified in the cylindrical setting. In many instances, the cylindrical nature of the integrator introduces significant analytical challenges.

We begin with a brief review of the construction of the stochastic integral, referring to \cite{BR} for full details. Fix $T>0$ and let $L$ be a cylindrical Lévy process  in $U$ with characteristics $[a,Q,\lambda]$.  An $L_2(U,H)$-valued predictable step process $\Psi \colon \Omega \times [0,T]\rightarrow L_2(U,H)$ is of the form
\begin{align} \label{eq.step-HS}
	\Psi(\omega, t)=\sum_{i=1}^{n-1} \left(\sum_{k=1}^{N(i)}F_{i,k}\mathbbm{1}_{A_{i,k}}(\omega)\right) \mathbbm{1}_{ (t_i,t_{i+1}]}(t),
\end{align}
where $0=t_1<\cdots < t_{n}=T$, $A_{i,k} \in \mathcal{F}_{t_i}$ and $F_{i,k} \in L_2(U,H)$ for all $i=1,...,n-1$ and $k=1,...,N(i)$. The space of all $L_2(U,H)$-valued predictable step processes is denoted by $\mathcal{S}_{\rm prd}^{\rm HS}:=\mathcal{S}_{\rm prd}^{\rm HS}(U,H)$.

Let $\Psi \in \mathcal{S}_{\rm prd}^{\rm HS}$ be of the form (\ref{eq.step-HS}). It follows from \cite[Th.\ VI.5.2 and Prop.\ VI.5.3]{vakhania_1981} that there exists an $H$-valued random variable $F_{i,k}(L(t_{i+1})-L(t_i))\colon \Omega\to H$ for each $i=1,...,n-1$ and $k=1,...,N(i)$, satisfying \[ \big(L(t_{i+1})-L(t_i)\big)(F_{i,k}^\dagger h)=\langle F_{i,k}(L(t_{i+1})-L(t_i)),h \rangle \quad \text{$P$-a.s.\ for all}\; h \in H.\]
Using these $H$-valued random variables,  the stochastic integral of $\Psi$ is defined, for all $t\in [0,T]$, by
\[I(\Psi)(t):=\int_{(0,t]} \Psi(t) \, {\rm d}L(t) :=\sum_{i=1}^{n-1}\sum_{k=1}^{N(i)} \mathbb{1}_{A_{i,k}}F_{i,k} (L(t_{i+1}\wedge t)-L(t_i\wedge t) ).\]
Thus, the integral $I(\Psi)(t):\Omega\rightarrow H$ is a genuine $H$-valued random variable.

We consider the measure space $\big(\Omega \times [0,T],\mathcal{P}, P_T\big)$, where $\mathcal{P}$ denotes the predictable $\sigma$-algebra, generated by the left-continuous,  $\{\F_t\}_{t\ge 0}$-adapted processes, and the measure $P_T$ is defined by $P_T:=P \otimes {\rm Leb} \vert_{[0,T]}$.

\begin{definition} \label{def.pred_integrability}
	A predictable process $\Psi \colon \Omega \times [0,T] \rightarrow L_2(U,H)$ is $L$-integrable if there exists a sequence $(\Psi_n)_{n \in \mathbb{N}}$ of processes in $\mathcal{S}_{\rm prd}^{\rm HS}$ such that
	\begin{enumerate}[(1)]
		\item[{\rm (1)}]  $(\Psi_n)_{n \in \mathbb{N}}$ converges $P_T$-a.e. to $\Psi$;
		\item[{\rm (2)}] $\displaystyle \lim_{m,n \rightarrow \infty}d_T^{em}(I(\Psi_m),I(\Psi_n))=0.$
	\end{enumerate}
	In this case,  the stochastic integral process $(I(\Psi)(t):\, t\in [0,T])$ of $\Psi$ is defined as the limit 
	of $(I(\Psi_n))_{n\in\N}$ in $\dem_T$. The class of all $L$-integrable $L_2(U,H)$-valued predictable processes will be denoted by $\mathcal{I}_{{\rm prd},L}^{\rm HS}:=\mathcal{I}_{{\rm prd},L}^{\rm HS}(U,H)$. 
\end{definition}

It follows immediately from  the fact that convergence in $\dem_T$ implies convergence in $\ducp_T$, that the integral process has c\`adl\`ag paths. Furthermore,  as shown in \cite{BR}, the integral process is an $H$-valued semimartingale.

To characterise the space of $L$-integrable predictable processes, fix a cylindrical L\'evy process $L$ in $U$ with characteristics $[a,Q,\lambda]$. 
If  $F\colon U\to H$ is a Hilbert-Schmidt operator in $L_2(U,H)$, then there exists a genuine L\'evy process $FL$ in $H$ satisfying 
\[
\scapro{FL(t)}{h}=L(t)(F^\ast h) \qquad\text{for all }t\ge 0, h\in H. 
\]
The L\'evy process $FL$ has genuine characteristics $(a_F, Q_F,\lambda_F)$ satisfying
\begin{align*}
	&\scapro{a_{F}}{h} = a(F^\dagger h)+\int_H \big( \scapro{\theta(u)}{h} - \scapro{u}{h} \1_{[-1,1]}(\scapro{u}{h})\big) \,\lambda_F(\d u),\\
	& Q_F=FQF^\dagger,\qquad 
	\lambda_F=  \lambda\circ F^{-1} \quad\text{on }{\mathcal Z}_\ast(H).
\end{align*}
Here, the characteristics $(a_F, Q_F,\lambda_F)$ is given in terms of the truncation function
\[
\theta\colon H\to H, \qquad \theta(u)=\begin{cases}
u, &\text{if } \norm{u}\le 1, \\
	 \tfrac{u}{\norm{u}}, &\text{otherwise.}
\end{cases}
\] 
Using this characteristics, we define functions 
\begin{align*}
	\zeta_L:L_2(U,H) \rightarrow \R,\qquad 	\zeta_L(F)&= \int_H\Big(\norm{h}^2 \wedge 1\Big)\, \lambda_F(\d h)+{\rm Tr}(FQF^\dagger),\\
	\eta_L:L_2(U,H) \rightarrow \R,\qquad	\eta_L(F)&= \sup_{O  \in E_{L(H)}} \norm{a_{O F}},
\end{align*}
where $E_{L(H)}$ denotes the closed unit ball in $L(H,H)$. 

	For a measurable function $\psi:[0,T]\rightarrow L_2(U,H)$  define
	\begin{align*}
		&m_L(\psi):=\int_{(0,T]} \Big( \zeta_L(\psi(t))+\eta_L(\psi(t))+\big(\norm{\psi(t)}_{2}^2\wedge 1\big) \Big)\,\d t,
	\end{align*}
	and denote by $\M_L$ the space of Lebesgue a.e.\ equivalence classes of measurable functions $\psi:[0,T]\rightarrow L_2(U,H)$ for which $m_L(\psi)< \infty$.

The next result provides a characterisation of the space of predictable processes that are integrable with respect to a given cylindrical Lévy process. The proof, presented in \cite{BR}, is based on a decoupling method introduced in \cite{kwapien_woyczynski_1992}.
\begin{theorem} \label{pred_iff_integrable}
	A predictable Hilbert-Schmidt operator-valued process $\Psi \colon \Omega\times [0,T]\rightarrow L_2(U,H)$ is integrable with respect to $L$ if and only if 
	 $m_L(\Psi)<\infty$ a.s.
\end{theorem}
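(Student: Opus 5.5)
The plan is to follow the decoupling strategy of Kwapień and Woyczyński \cite{kwapien_woyczynski_1992}. The idea is to reduce the random, predictable integrand $\Psi$ to deterministic integrands by passing to a \emph{decoupled tangent} integral. Let $\tilde L$ be a cylindrical L\'evy process with the same characteristics $[a,Q,\lambda]$ as $L$, defined on an enlarged probability space and independent of $\mathcal{A}$. For a step process $\Psi\in\mathcal{S}^{\rm HS}_{\rm prd}$ of the form \eqref{eq.step-HS} we set
\[
\tilde I(\Psi)(t):=\sum_{i=1}^{n-1}\sum_{k=1}^{N(i)}\1_{A_{i,k}}F_{i,k}\big(\tilde L(t_{i+1}\wedge t)-\tilde L(t_i\wedge t)\big).
\]
The increments of $I(\Psi)$ and $\tilde I(\Psi)$ then form tangent sequences with respect to the enlarged filtration. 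Moreover, $\tilde I(\Psi)$ is conditionally, given $\mathcal{A}$, the integral of the deterministic function $\psi=\Psi(\omega,\cdot)$ with respect to $\tilde L$.

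\emph{Step 1 (deterministic integrands).} First I would prove the statement for deterministic measurable $\psi\colon[0,T]\to L_2(U,H)$. For a step function $\psi$, the integral $\int\psi\,\d L$ is an infinitely divisible $H$-valued random variable. Its characteristics are obtained by integrating in time the characteristics $(a_{\psi(t)},Q_{\psi(t)},\lambda_{\psi(t)})$ of the genuine L\'evy processes $\psi(t)L$. By the classical theorem that infinitely divisible laws converge exactly when their characteristics converge, a sequence of step functions is Cauchy in the sense of convergence in probability of the integrals if and only if the differences are small in the following sense: $\zeta_L$ of the differences is small (this controls the Gaussian part and the truncated second moments of the jumps), and the drifts $a$ of the differences are small. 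Since the metric $\dem_T$ involves a supremum over $\Gamma\in\mathcal{S}_{\rm prd}^{1,{\rm op}}$, the drift must be controlled uniformly after composing with operators $O$ in the unit ball. This is exactly what $\eta_L$ measures. The term $\norm{\psi}_2^2\wedge 1$ ensures $P_T$-a.e.\ approximability.

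Combining this with Lemma~\ref{le.Emery-ucp}, which lets us drop the supremum over time, we obtain the following. A deterministic $\psi$ is integrable if and only if $m_L(\psi)<\infty$, and $\rho^{\rm em}_T(I(\psi),0)$ is small precisely when $m_L$ of the \emph{differences} of approximating step functions is small. Care is needed here because $m_L$ is not a norm but only quasi-subadditive up to constants; I would verify that $\zeta_L$ and $\eta_L$ satisfy $\zeta_L(F+G)\le c(\zeta_L(F)+\zeta_L(G))$ and the analogous bound for $\eta_L$.

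\emph{Step 2 (decoupling inequalities).} Next I would establish the two-sided comparison between $I(\Psi)$ and its decoupled version $\tilde I(\Psi)$. Namely, for step processes, $\rho^{\rm em}_T(I(\Psi_n),0)\to0$ if and only if $\rho^{\rm em}_T(\tilde I(\Psi_n),0)\to0$ in probability. This would follow from the Kwapień--Woyczyński tangent-sequence inequalities for convergence in probability of sums of tangent sequences. They apply simultaneously to all $\Gamma\Psi_n$ with $\Gamma\in\mathcal{S}^{1,{\rm op}}_{\rm prd}$, since $\Gamma\Psi_n$ is again a step process. Conditioning on $\mathcal{A}$ and applying Step 1 pathwise, convergence of the decoupled integrals becomes the statement that $m_L(\Psi_n(\omega,\cdot)-\Psi_m(\omega,\cdot))\to0$ in probability.

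\emph{Step 3 (conclusion).} For sufficiency, assume $m_L(\Psi)<\infty$ almost surely. I would approximate $\Psi$ by predictable step processes $\Psi_n\to\Psi$ $P_T$-a.e.\ with $m_L(\Psi_n-\Psi)\to0$ in probability. One way to do this is to truncate, localise with the stopping times $\inf\{t:\int_0^t(\zeta_L+\eta_L+\norm{\cdot}_2^2\wedge1)(\Psi(s))\,\d s>\ell\}$, and use density of step processes together with dominated convergence for $m_L$. Steps 1 and 2 then give the Cauchy property (2) of Definition~\ref{def.pred_integrability}. For necessity, assume $\Psi$ is integrable with approximating sequence $(\Psi_n)$. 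The Cauchy property transfers through Step 2 to the statement that $m_L(\Psi_n-\Psi_m)\to0$ in probability. By Fatou's lemma along the a.e.\ convergent sequence, and using continuity properties of $\zeta_L$ and $\eta_L$, this yields $m_L(\Psi)<\infty$ almost surely.

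\emph{Main obstacle.} I expect the main difficulty to lie in Step 2, and specifically in the drift. The tangent-sequence comparison is clean for conditionally symmetric increments. Here, however, the drift $a$ of a cylindrical L\'evy process is merely a continuous, possibly nonlinear, map on $U$, and the truncation interacts with the operators $\Gamma$. The uniform control over $\Gamma$ is what forces the supremum over the unit ball of $L(H)$ in the definition of $\eta_L$. I would first handle the symmetrised process $L-\tilde L$, and then treat the drift part separately through the explicit formula for $a_{OF}$.
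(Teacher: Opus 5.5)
Your overall route is the one the paper relies on. The paper gives no argument of its own; it defers to \cite[Th.~4.4]{BR}, whose proof rests on the decoupling method of \cite{kwapien_woyczynski_1992}. Your ingredients are the right ones: the decoupled integral $\tilde I$, infinitely divisible characteristics for deterministic integrands, the supremum over $\Gamma$ as the source of $\eta_L$, and density plus Fatou. The sufficiency half works as you describe. The tangent-sequence inequality bounds $\int\Gamma\Psi\,\d L$ by its decoupled counterpart. Conditionally on $\mathcal A$, the latter is a deterministic integral of $\Gamma(\omega,\cdot)\Psi(\omega,\cdot)$. Finally, Lemma~\ref{le.modular_domination} gives $m_L(\Gamma\Psi)\le c\,m_L(\Psi)$ uniformly in $\Gamma$.

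The gap is the \emph{only if} half of Step~2. The Kwapie\'n--Woyczy\'nski inequalities for a single sum of tangent increments are one-sided: the conditionally independent sequence controls the original one, not conversely. Applying them to each $\Gamma\Psi_n$ and taking the supremum therefore gives only that $\rho^{\rm em}_T(I(\Psi_n),0)\to 0$ whenever the decoupled quantity tends to $0$.

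The converse genuinely fails for individual sums, even after inserting running maxima. Let $L$ be a real Brownian motion with drift $\mu$, and let $v$ be a $\pm1$-valued predictable step process that on each (fine) grid interval points from the current value of $\int v\,\d L$ towards the origin. Then $\sup_{t\le T}\abs{\int_{(0,t]} v\,\d L}$ tends to $0$ in probability as $\mu\to\infty$; it is of order $\mu^{-1}\log\mu$. However, conditionally on $\mathcal A$, the decoupled integral $\int_{(0,T]} v\,\d\tilde L$ is Gaussian with variance $T$. So no comparison with constants independent of the law of the increments, which is what tangent-sequence inequalities provide, can hold in this direction. The equivalence in $\dem$ is restored only through the freedom in $\Gamma$: taking $\Gamma=v$ turns $\int\Gamma\,\d I(v)$ into $L(T)$ and exposes the drift.

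Hence the necessity direction needs a two-sided comparison for the whole families $\{\int\Gamma\,\d I(\Psi)\}$ and $\{\int\Gamma\,\d\tilde I(\Psi)\}$, $\Gamma\in\mathcal S^{1,{\rm op}}_{\rm prd}$, being simultaneously bounded (respectively null) in probability. Equivalently, a semimartingale and its decoupled tangent process must have the same predictable integrands. This is a different and deeper statement than the tail inequalities you invoke. Alternatively, you need a direct argument that extracts the $\zeta_L$- and $\eta_L$-parts of $m_L(\Psi_n-\Psi_m)$ from $\int\Gamma\,\d I(\Psi_n-\Psi_m)$ by suitable predictable choices of $\Gamma$, for instance operators aligning the drifts $a_{O\Psi(t)}$.

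Your suggested remedy of first treating $L-\tilde L$ does not close the gap. Smallness of $I(\Psi_n)$ in $\dem$ says nothing about $\tilde I(\Psi_n)$, and hence nothing about $I(\Psi_n)-\tilde I(\Psi_n)$, until this reverse comparison is already available.
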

\begin{proof}
	See Theorem 4.4 in  \cite{BR}. 
\end{proof}

\begin{remark}\label{re.caglad_is_integrable}
    It follows from Theorem \ref{pred_iff_integrable} that every adapted càglàd process $\Psi$ is $L$-integrable. Indeed, since every càglàd function on a compact interval is bounded, it follows from \cite[Re. 3.17]{BR}, that, for each $\omega \in \Omega$, there exists a finite constant $C_\omega$ such that $m_L(\Psi(\omega))\leq C_\omega$. Hence, $\Psi$ is $L$-integrable by Theorem \ref{pred_iff_integrable}.
\end{remark}

The following stochastic dominated convergence result will be of great importance throughout this paper.

	\begin{theorem}\label{th_dom_stoch_conv}
	Let $(\Psi_n)_{n \in \mathbb{N}}$ be a sequence of $L$-integrable, predictable processes 
	converging to an $L_2(U,H)$-valued predictable process $\Psi$  $P_T$-a.e.  If there exists an $L$-integrable predictable  process $\Upsilon $ satisfying for all $n \in \mathbb{N}$ that
	\[(\zeta_L+\eta_L)(\Psi_n(\omega,t)) \leq (\zeta_L+\eta_L)(\Upsilon(\omega,t)) \quad  \text{for $P_T$-a.a.\ $(\omega,t)\in \Omega\times [0,T]$}, \]
	then $\Psi$ is $L$-integrable  and satisfies
	\[\displaystyle \lim_{n\to\infty} d_T^{em}(I(\Psi_n),I(\Psi))=0.\]
\end{theorem}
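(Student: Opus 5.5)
Integrability of $\Psi$ comes directly from Theorem~\ref{pred_iff_integrable}. The main step is to build an Emery-topology majorant $\Theta\colon \M_L\to[0,\infty)$ from the decoupling estimates of \cite{BR}, and then run a Lebesgue-type dominated convergence argument pathwise in $\omega$.

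\emph{Integrability of $\Psi$.} The domination gives $(\zeta_L+\eta_L)(\Psi_n)\le(\zeta_L+\eta_L)(\Upsilon)$ $P_T$-a.e. Since $\zeta_L$ is lower semicontinuous on $L_2(U,H)$ (via weak convergence of the truncated L\'evy measures $\lambda_F$) and $\eta_L$ is likewise lower semicontinuous, passing to the $P_T$-a.e.\ limit yields $(\zeta_L+\eta_L)(\Psi)\le(\zeta_L+\eta_L)(\Upsilon)$ a.e. The term $\norm{\Psi}_2^2\wedge 1$ is bounded by $1$, so it causes no trouble on $[0,T]$. Since $m_L(\Upsilon)<\infty$ a.s., we get $m_L(\Psi)<\infty$ a.s., and Theorem~\ref{pred_iff_integrable} gives integrability.

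\emph{Reduction to the Emery bound.} Put $\Phi_n:=\Psi_n-\Psi$; this process is $L$-integrable with $\Phi_n\to0$ $P_T$-a.e. The key tool is the estimate underlying the proof of Theorem~\ref{pred_iff_integrable} in \cite{BR}. It is obtained through the Kwapie\'n--Woyczy\'nski decoupling and Lemma~\ref{le.Emery-ucp}, which lets us drop the supremum in time. The estimate says that for every $\epsilon>0$ there is $\delta>0$ with
\[
\dem_T(I(\Phi),0)\le \epsilon + C\,P\big(m_L(\Phi)>\delta\big)
\]
for all integrable $\Phi$. Equivalently, $I$ is continuous from $\{\Phi: m_L(\Phi)\to0 \text{ in probability}\}$ into $(D_H,\dem_T)$. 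I would state this as a lemma extracted from \cite{BR}, first for step processes and then for general integrable $\Phi$ by passing to limits in Definition~\ref{def.pred_integrability}. It then suffices to show $m_L(\Psi_n-\Psi)\to0$ in probability.

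\emph{Pathwise dominated convergence.} Fix $\omega$ outside a null set. Then $\Psi_n(\omega,t)\to\Psi(\omega,t)$ for a.e.\ $t$ and $m_L(\Upsilon(\omega))<\infty$. We need $\zeta_L(\Psi_n-\Psi)+\eta_L(\Psi_n-\Psi)\to0$ pointwise and a dominating function.

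For pointwise convergence: $\zeta_L(F)\to0$ as $F\to0$ in $L_2$, and $\eta_L(F)\to0$ as well, by continuity of $a$ and of $\lambda_F$ at $0$; these are in \cite[Sec.~3]{BR}.

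For domination I would use quasi-subadditivity, $\zeta_L(F+G)\le 2(\zeta_L(F)+\zeta_L(G))$ and $\eta_L(F+G)\le c\,(\eta_L(F)+\eta_L(G)+\zeta_L(F)+\zeta_L(G))$. The extra $\zeta_L$ terms in the second inequality come from the drift correction produced by the truncation $\theta$. These give
\[
(\zeta_L+\eta_L)(\Psi_n-\Psi)\le c'\,(\zeta_L+\eta_L)(\Upsilon)
\]
a.e., which is integrable in $t$. Lebesgue's theorem then gives $m_L(\Psi_n(\omega)-\Psi(\omega))\to0$ for a.a.\ $\omega$, hence in probability, and the Emery bound concludes.

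I expect the main obstacle to be the $\eta_L$ part. The drift $a_{OF}$ is not subadditive, and its supremum over $O\in E_{L(H)}$ requires uniform control. I would first try to prove the quasi-triangle inequality for $\eta_L$ by writing $a_{O(F+G)}-a_{OF}-a_{OG}$ as integrals of $\theta(u+v)-\theta(u)-\theta(v)$. This difference vanishes when $u,v$ are small and is bounded by a constant times $(\norm{u}^2\wedge1)$-type terms otherwise. If this fails, the fallback is to cite the corresponding continuity property of $\eta_L$ established in \cite{BR}.
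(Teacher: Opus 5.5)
Your argument is correct and is essentially the proof of \cite[Th.~7.5]{BR}, to which the paper defers without reproducing it. That proof runs as follows: $\Psi$ is integrable by lower semicontinuity, Fatou and Theorem~\ref{pred_iff_integrable}; then $m_L(\Psi_n-\Psi)\to 0$ a.s.\ by pathwise dominated convergence, using the quasi-triangle inequalities for $\zeta_L$ and $\eta_L$ together with continuity of these functionals at $0$; finally \'Emery convergence follows from \cite[Cor.~7.3]{BR}, the same mechanism the paper uses in Lemma~\ref{le.prop_int_proc}(b) and Lemma~\ref{le.cont_of_lambda}. One simplification: cite \cite[Cor.~7.3]{BR} directly for your ``\'Emery bound'' rather than re-deriving it from step processes, since passing to general integrable $\Phi$ that way already requires modular convergence of the approximants, which is the converse direction of that corollary.
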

\begin{proof}
	See Theorem 7.5 in \cite{BR}. 
\end{proof}

\begin{corollary}\label{co.bdd_stoch_convergence}
    Let $(\Psi_n)_{n \in \mathbb{N}}$ be a sequence of $L$-integrable, predictable processes 
	converging to an $L_2(U,H)$-valued predictable process $\Psi$  $P_T$-a.e. If we have
    \[\esssup_{\omega\in\Omega}\sup_{s\in (0,T]}\sup_{n\in\N} \norm{\Psi_n(\omega,s)}<\infty,\]
    then $\Psi$ is $L$-integrable  and satisfies
	\[\displaystyle \lim_{n\to\infty} d_T^{em}(I(\Psi_n),I(\Psi))=0.\]
\end{corollary}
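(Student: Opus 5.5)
The plan is to deduce the corollary directly from Theorem~\ref{th_dom_stoch_conv}. The task is to produce an $L$-integrable predictable process $\Upsilon$ that dominates each $\Psi_n$ in the sense of the functionals $\zeta_L+\eta_L$. Set
\[
C:=\esssup_{\omega\in\Omega}\sup_{s\in (0,T]}\sup_{n\in\N} \norm{\Psi_n(\omega,s)}<\infty.
\]
The first step is a deterministic bound: there is a finite constant $K_C$ with
\[
(\zeta_L+\eta_L)(F)\le K_C \qquad\text{for all } F\in L_2(U,H) \text{ with } \norm{F}_{\rm HS}\le C.
\]
This should be exactly the content of \cite[Re.~3.17]{BR}, which is the remark already invoked in Remark~\ref{re.caglad_is_integrable} to say that $m_L$ is bounded on bounded functions. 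If it has to be checked by hand, the argument would go as follows. For the trace term, ${\rm Tr}(FQF^\dagger)\le \norm{Q}\norm{F}_{\rm HS}^2$. The jump term $\int_H(\norm{h}^2\wedge 1)\,\lambda_F(\d h)$ and the drift term $\eta_L(F)$ are controlled, uniformly over the Hilbert--Schmidt ball, by the continuity of the characteristic exponent $S$ of $L$ together with the fact that $FL$ is a genuine Lévy process. Since we may assume the result from \cite{BR}, I would simply cite it.

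The second step fixes the dominating process. A non-constant choice is awkward, because $\zeta_L+\eta_L$ is not monotone in the norm in any obvious way. Instead I would pick a deterministic $\Upsilon$ such that $(\zeta_L+\eta_L)(\Upsilon)\ge K_C$. The cleanest route is to take $\Upsilon$ taking values in the same ball and maximising the functional, but attaining a supremum is not guaranteed. So the better plan is to modify the domination: Theorem~\ref{th_dom_stoch_conv} is proved in \cite{BR} via the bound $\int_0^T (\zeta_L+\eta_L)(\Upsilon(s))\,\d s<\infty$ a.s. on the dominating function.

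Accordingly, I would rerun that step with the constant dominating function $K_C$. This means applying the underlying statement of \cite[Th.~7.5]{BR}, whose hypothesis is an integrable scalar majorant $g$ with $(\zeta_L+\eta_L)(\Psi_n)\le g$ and $\int_0^T g\,\d t<\infty$ a.s.

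If only the stated form of Theorem~\ref{th_dom_stoch_conv} is available, the fallback is as follows. Take $\Upsilon=\Psi_n$-independent from the supremum functional $\Phi(\omega,t):=\sup_n(\zeta_L+\eta_L)(\Psi_n(\omega,t))\le K_C$. Then check that $\Phi$ corresponds to some $F$ by scaling: for fixed nonzero $F_0$, the map $r\mapsto(\zeta_L+\eta_L)(rF_0)$ tends to infinity. This can be arranged, for example, by adding a nondegenerate direction, since the term $\norm{\psi}_2^2\wedge 1$ alone does not suffice.

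The third step concludes. Once the domination holds, Theorem~\ref{th_dom_stoch_conv} gives integrability of $\Psi$ and $\dem_T(I(\Psi_n),I(\Psi))\to0$. Integrability of each $\Psi_n$ is part of the hypotheses, and the $P_T$-a.e. convergence is assumed.

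The main obstacle is producing an admissible dominating process $\Upsilon$, as opposed to a scalar bound. I expect to resolve it by appealing to the majorant form of the dominated convergence theorem in \cite{BR} together with the uniform bound of \cite[Re.~3.17]{BR}.
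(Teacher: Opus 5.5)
Your main route is exactly the paper's proof: use \cite[Re.~3.17]{BR} to bound $\zeta_L+\eta_L$ by a constant on the relevant Hilbert--Schmidt ball, then rerun the proof of \cite[Th.~7.5]{BR} with that constant in place of the majorant coming from $\Upsilon$. Drop the scaling ``fallback'' for manufacturing an admissible $\Upsilon$: it is not needed, and it fails in general (for a symmetric compound Poisson noise one has $\eta_L\equiv 0$, and $r\mapsto\zeta_L(rF_0)$ stays bounded by the total jump intensity), which is precisely why one reruns the proof instead of applying Theorem~\ref{th_dom_stoch_conv} as stated.
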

\begin{proof}
    The proof is completely analogous to that of \cite[Th. 7.5]{BR}, with the exception that we can now use \cite[Re. 3.17]{BR} to obtain an upper bound on $m_L$ by a constant.
\end{proof}

 The following lemma provides a tool to obtain upper bounds for $\zeta_L$ and $\eta_L$.
\begin{lemma}\label{le.modular_domination}
    Let $U,H$ and $V$ be separable Hilbert spaces, and $L$ be a cylindrical L\'evy process in $U$. For each $C \in L_2(U,H)$ and $D \in L(H,V)$ with $\norm{D}\leq r$ for some $r>0$, the following assertions hold:
    \begin{enumerate}
        \item [{\rm (1)}] $\zeta_L(DC)\leq \max\{r,1\}\, \zeta_L(C)$;
        \item [{\rm (2)}] $\eta_L(DC)\leq d_r\left(\eta_L(C)+ \zeta_L(C)\right)$ for some constant $d_r>0$ depending only on $r$.
    \end{enumerate}
\end{lemma}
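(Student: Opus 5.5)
The plan is to split $\zeta_L$ into its jump part and its Gaussian part and compare each term for $DC$ with the corresponding term for $C$. The first step is to identify the characteristics of $DC$. Since $(DC)L = D(CL)$ as genuine L\'evy processes in $V$, we get $\lambda_{DC}=\lambda_C\circ D^{-1}$ and $Q_{DC}=DQ_CD^\dagger$ with $Q_C=CQC^\dagger$. A second route is to check $\scapro{D(CL)(t)}{v}=L(t)(C^\dagger D^\dagger v)$ and read off the characteristic function.

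For part (1) this gives
\[
\zeta_L(DC)=\int_H \big(\norm{Dh}^2\wedge 1\big)\,\lambda_C(\d h)+{\rm Tr}(DQ_CD^\dagger).
\]
The task is therefore to bound the two summands separately by the corresponding summands of $\zeta_L(C)$, pointwise in $h$ for the jump part and via the trace inequality for the Gaussian part. For the jump part I will use $\norm{Dh}\le r\norm{h}$, split according to whether $\norm{h}\le 1/\max\{r,1\}$ or not, and bound $\norm{Dh}^2\wedge 1$ by a multiple of $\norm{h}^2\wedge1$. For the Gaussian part I will use ${\rm Tr}(DQ_CD^\dagger)=\norm{DQ_C^{1/2}}_{\rm HS}^2\le \norm{D}^2\,{\rm Tr}(Q_C)$.

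This step is the main obstacle, and I am not sure it works. Both elementary comparisons naturally produce the factor $\max\{r,1\}^2$, not the linear factor $\max\{r,1\}$ in the statement. The test case $D=r\,\Id_H$ with $r>1$ and purely Gaussian $L$ gives ${\rm Tr}(r^2Q_C)=r^2\,{\rm Tr}(Q_C)$ exactly, so the stated linear factor cannot hold in that case. I would attempt the linear bound first, but I expect the argument to close only with $\max\{r,1\}^2=\max\{r^2,1\}$. I would therefore check the source of the constant in \cite{BR} before asserting part (1) with $\max\{r,1\}$. If the linear factor cannot be justified, the proof will record $\max\{r^2,1\}$ in its place. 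For the later applications only the existence of a constant depending on $r$ alone matters, and that is unaffected.

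For part (2), let $O$ be in the unit ball of $L(V)$. Write $OD=r\,\tilde O$, where $\tilde O:=OD/r$ satisfies $\norm{\tilde O}\le 1$. Taking $\tilde O$ in the supremum defining $\eta_L(C)$ gives $\norm{a_{\tilde OC}}\le \eta_L(C)$, so it remains to compare $a_{rF}$ with $a_F$ for $F=\tilde OC$. From the formula for $a_F$ I expect
\[
a_{rF}= r\,a_F+\int_V\big(\theta(ru)-r\theta(u)\big)\,\lambda_F(\d u).
\]
Here I use the bound $\norm{\theta(ru)-r\theta(u)}\le c_r(\norm{u}^2\wedge 1)$. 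This bound vanishes where both $\norm{u}\le1$ and $r\norm{u}\le1$. On the remaining set $\norm{u}\ge \min\{1,1/r\}$ it is bounded by $r+1$, hence by $(r+1)\max\{r^2,1\}(\norm{u}^2\wedge1)$. This gives
\[
\norm{a_{ODC}}\le r\,\eta_L(C)+c_r\,\zeta_L(\tilde OC).
\]
Finally, part (1) applied with the contraction $\tilde O$ bounds $\zeta_L(\tilde OC)$ by $\zeta_L(C)$. Taking the supremum over $O$ yields (2) with $d_r=\max\{r,c_r\}$. This part is insensitive to the constant issue in part (1).
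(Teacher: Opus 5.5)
Your objection to part (1) is correct, and your test case is a genuine counterexample. For $D=r\,\Id_H$ with $r>1$ and $L$ Gaussian, $\zeta_L(DC)=r^2\,\mathrm{Tr}(CQC^\dagger)=r^2\zeta_L(C)$. The jump part scales the same way. Take $U=H=V=\R$, $C=1$, $D=r$ and $L$ a Poisson process with intensity $\nu$ and jumps of size $1/r$; then $\zeta_L(C)=\nu/r^2$ and $\zeta_L(DC)=\nu$. So $\max\{r^2,1\}$ is the sharp constant, and it is exactly what your two comparisons give. The paper's proof does not catch this because it only refers to \cite[Le.\ 3.14]{BR}. Nothing later breaks. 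The lemma is used only in Lemma~\ref{le.prop_int_proc}(b), where $\max\{2c,1\}$ should read $\max\{4c^2,1\}$, and there only the existence of some dominating function matters.

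The paper defers (1) to \cite[Le.\ 3.14]{BR} and (2) to \cite[Le.\ 3.9, Re.\ 3.10, Le.\ 3.16]{BR}. Your argument is self-contained instead. For (1) you use the image characteristics $\lambda_{DC}=\lambda_C\circ D^{-1}$ and $Q_{DC}=DQ_CD^\dagger$; for (2) you use an explicit change-of-drift formula. This is more elementary and makes all constants explicit, which is precisely what exposes the misstated factor.

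One step in (2) does not work as written. Your $\tilde O=OD/r$ maps $H$ into $V$, so it is not an element of $E_{L(H)}$, over which the supremum defining $\eta_L(C)$ is taken. The bound $\norm{a_{\tilde OC}}\le\eta_L(C)$ is therefore unjustified unless $V=H$. The case $V\neq H$ is exactly the one used later ($V=\R$ in Lemma~\ref{le.prop_int_proc}(b)).

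The simplest repair is to drop $\tilde O$. Your computation for $r\,\Id$ goes through verbatim for any $B\in L(H,V)$ with $\norm{B}\le r$ and gives
\[
a_{BC}=B\,a_C+\int_H\big(\theta(Bu)-B\theta(u)\big)\,\lambda_C(\d u).
\]
To bound the integrand, check three cases:
\begin{itemize}
\item $\norm{u}\le1$ and $\norm{Bu}\le1$: the integrand is zero.
\item $\norm{u}\le1<\norm{Bu}$: this forces $\norm{u}>1/r$.
\item $\norm{u}>1$.
\end{itemize}
Together these give $\norm{\theta(Bu)-B\theta(u)}\le(1+r)\max\{r^2,1\}(\norm{u}^2\wedge1)$. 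Now take $B=OD$ and use $\norm{a_C}\le\eta_L(C)$, obtained by choosing $O=\Id_H$ in its definition. This yields
\[
\eta_L(DC)\le r\,\eta_L(C)+(1+r)\max\{r^2,1\}\,\zeta_L(C),
\]
without needing part (1) at all.

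Alternatively, keep your reduction and embed $\overline{\operatorname{Ran}\tilde O}$ isometrically into $H$, as in the proof of Lemma~\ref{le.cont_of_semimartingales}. This is harmless because $a_{KF}=Ka_F$ for every isometry $K$, since $\theta(Ku)=K\theta(u)$.
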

\begin{proof}
The proof of (1) is completely analogous to the second part of \cite[Le.\ 3.14]{BR}, while (2) follows from \cite[Le.\ 3.9]{BR}, \cite[Re.\ 3.10]{BR}, and an argument similar to \cite[Le.\ 3.16]{BR}.
\end{proof}

Throughout this paper, we will use the following properties of stochastic integrals. Although these results are standard in classical stochastic analysis, see e.g.\ \cite[Th.\ 2.7]{Chung-Williams}, we provide detailed proofs due to the cylindrical nature of our integrator. 
\begin{lemma}\label{le.prop_int_proc}
Let $L$ be a cylindrical L\'evy process in $U$, and let $\Psi$ be a predictable, $L_2(U,H)$-valued process that is integrable with respect to $L$. Then the integral process $(Y(t):\, t\in [0,T])$, defined by $Y(t):=\int_{(0,t]} \Psi\,{\rm d}L$, satisfies the following properties: 
	\begin{enumerate}
        \item [{\rm (a)}] Let $V$  be a separable Hilbert space  and $B \in L(H,V)$. Then, for all $0\leq p<q\leq T$ and $S \in \mathcal{F}_{p}$, we have that $\mathbb{1}_S B\Psi$ is $L$-integrable and
        	\begin{equation}\label{eq.op-commutes-int}
        		\mathbb{1}_{S}B \left(\int_{(p,q]} \Psi(s)\,{\rm d}L(s)\right)=\int_{(p,q]} \mathbb{1}_S B\Psi(s)\,{\rm d}L(s).
        \end{equation}
		\item [{\rm (b)}] Let $\Phi:\Omega\times[0,T]\rightarrow H$ be a uniformly bounded, adapted, càglàd process. Then, for each fixed $t>0$,  it follows that $\langle \Psi^\dagger \Phi, \cdot \rangle$ is $L$-integrable and
		\[\int_{(0,t]} \langle\Phi(s),\cdot\rangle \, {\rm d}Y(s)=\int_{(0,t]} \big\langle \Psi^\dagger(s)\Phi(s),\cdot\big\rangle \, {\rm d}L(s).\]
		\item [{\rm (c)}] For every stopping time $\tau$, we have that $\1_{(0,\tau]}\Psi$ is $L$-integrable and
		\[Y(\tau \wedge T)=\int_{(0,T]} \1_{(0,\tau]}(s)\Psi(s)\,{\rm d}L(s).\]
	\end{enumerate}
\end{lemma}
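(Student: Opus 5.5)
The strategy is the same for all three parts. First verify each identity for predictable step processes $\Psi\in\mathcal{S}_{\rm prd}^{\rm HS}$, where the integral is an explicit finite sum. Then pass to general $L$-integrable $\Psi$ through an approximating sequence $(\Psi_n)$ as in Definition~\ref{def.pred_integrability}, using the continuity of the relevant operations in $\dem_T$.

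\textbf{Part (a).} For $\Psi$ of the form \eqref{eq.step-HS}, the process $\1_S B\Psi$ is again a step process, after refining the partition to contain $p$ and $q$ and noting $S\in\F_p\subseteq\F_{t_i}$ for $t_i\ge p$. The identity \eqref{eq.op-commutes-int} then reduces to
\[
B\big(F(L(t)-L(s))\big)=(BF)(L(t)-L(s)) \quad \text{a.s.}
\]
This follows by testing against $v\in V$: both sides paired with $v$ equal $(L(t)-L(s))(F^\dagger B^\dagger v)$.

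For general $\Psi$, take step processes $\Psi_n\to\Psi$ $P_T$-a.e. with $(I(\Psi_n))$ Cauchy in $\dem_T$. Then $\1_S\1_{(p,q]}B\Psi_n\to \1_S\1_{(p,q]}B\Psi$ $P_T$-a.e. By Lemma~\ref{le.cont_of_semimartingales},
\[
\dem_T\big(I(\1_S\1_{(p,q]}B\Psi_n)-I(\1_S\1_{(p,q]}B\Psi_m),0\big)\le \max\{\norm{B},1\}\,\dem_T\big(I(\Psi_n)-I(\Psi_m),0\big).
\]
This uses the already established step-process identity $I(\1_S\1_{(p,q]}B\Psi_n)=\1_S\1_{(p,q]}B\,I(\Psi_n)$, read as a process. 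Hence $\1_S B\Psi$ is $L$-integrable. Passing to the limit in probability at time $T$, with $B$ continuous, gives the identity.

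\textbf{Part (c).} First treat stopping times $\tau$ with finitely many values $s_1<\dots<s_m$. Write
\[
\1_{(0,\tau]}=\sum_j \1_{\{\tau>s_{j-1}\}}\1_{(s_{j-1},s_j]}.
\]
Each event $\{\tau>s_{j-1}\}$ lies in $\F_{s_{j-1}}$, so (a) with $B=\Id$ gives $Y(\tau\wedge T)=\int_{(0,T]}\1_{(0,\tau]}\Psi\,{\rm d}L$.

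For general $\tau$, approximate from above by the dyadic discretisations $\tau_k\downarrow\tau$. Then $\1_{(0,\tau_k]}\Psi\to\1_{(0,\tau]}\Psi$ pointwise. Theorem~\ref{th_dom_stoch_conv} applies with dominating process $\Upsilon=\Psi$, because $(\zeta_L+\eta_L)(\1_A\Psi)\le(\zeta_L+\eta_L)(\Psi)$ given $\zeta_L(0)=\eta_L(0)=0$. It yields $\dem_T$-convergence of the integrals. On the other side, $Y(\tau_k\wedge T)\to Y(\tau\wedge T)$ by right-continuity of $Y$, which identifies the limit.

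\textbf{Part (b).} This is the main obstacle: integrability of $\langle\Psi^\dagger\Phi,\cdot\rangle$ and the associativity formula. I would first take $\Phi$ to be a simple càglàd process
\[
\Phi=\sum_i \phi_i\1_{(s_i,s_{i+1}]}, \qquad \phi_i\ \text{simple and }\F_{s_i}\text{-measurable}.
\]
On each piece $\{\phi_i=h\}$, the left-hand side equals $\langle h, Y(s_{i+1}\wedge t)-Y(s_i\wedge t)\rangle$. By (a) with $B=\langle h,\cdot\rangle\in L(H,\R)$ and $S=\{\phi_i=h\}$, this is the integral of $\1_S\langle\Psi^\dagger h,\cdot\rangle$, so the identity holds for simple $\Phi$.

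For general uniformly bounded adapted càglàd $\Phi$, approximate by simple càglàd $\Phi_n\to\Phi$ pointwise with $\sup_n\norm{\Phi_n}\le c$. One can use left-point samplings along refining partitions, followed by a countable-valued simple approximation of the values. On the left-hand side, convergence in probability holds by the dominated convergence theorem for $H$-valued semimartingale integrals in \cite{met82}, since $Y$ is a semimartingale. On the right-hand side, $\langle\Psi^\dagger\Phi_n,\cdot\rangle=D_n\Psi$ with $D_n=\langle\Phi_n,\cdot\rangle\in L(H,\R)$ and $\norm{D_n}\le c$. Lemma~\ref{le.modular_domination} then gives
\[
(\zeta_L+\eta_L)(D_n\Psi)\le C_c\,(\zeta_L+\eta_L)(\Psi).
\]
So Theorem~\ref{th_dom_stoch_conv} applies with dominating process a rescaling of $\Psi$. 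Some care is needed here: $\zeta_L,\eta_L$ are not homogeneous, so rather than rescaling I would dominate by $\Psi$ after dividing through by $\max\{c,1\}$, or redo the domination argument of \cite[Th.~7.5]{BR} with the constant $C_c$. This yields integrability of the limit and convergence of the right-hand sides, which proves (b).
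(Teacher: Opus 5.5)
Your proposal is correct and follows the paper's proof essentially step by step: the step-process identity plus Lemma~\ref{le.cont_of_semimartingales} for (a), dyadic approximation of $\tau$ from above with Theorem~\ref{th_dom_stoch_conv} and right-continuity for (c), and the simple-$\Phi$ case via (a) followed by a uniformly bounded approximation for (b). In (b), normalising to $\|\Phi\|\le 1$ does not by itself remove the unspecified constant $d_1$ in Lemma~\ref{le.modular_domination}(2), so use your second option instead; this is exactly what the paper does, combining \cite[Cor.~7.3]{BR} with a pathwise Lebesgue dominated convergence argument in which multiplicative constants in the domination are harmless.
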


\begin{proof}
(a): First, observe that identity \eqref{eq.op-commutes-int}  holds for all $\Psi \in \mathcal{S}_{\rm prd}^{\rm HS}$ by direct computation using the definition of the stochastic integral.

Now consider a general integrand  $\Psi \in \mathcal{I}_{{\rm prd},L}^{\rm HS}(U,H)$. By definition, there exists a sequence $(\Psi_n)_{n\in \mathbb{N}}\subseteq \mathcal{S}_{\rm prd}^{\rm HS}$ converging $P_T$-a.e.\ to $\Psi$ and 
such that $d_T^{em}(I(\Psi_m),I(\Psi_n))\to 0$ as $m,n\to\infty$. Since  identity  \eqref{eq.op-commutes-int} applies to each $\Psi_n$,  Lemma \ref{le.cont_of_semimartingales} implies that $d_T^{em}(I(\1_SB\Psi_m),I(\1_S B\Psi_n))\to 0$. Since $(\1_SB\Psi_n)_{n\in\N}$ converges to $\1_S B\Psi$ $P_T$-a.e., 
it follows that ${\mathbb{1}_SB\Psi}$ is $L$-integrable, and 
\[\lim_{n\rightarrow \infty}\int_{(p,q]} \mathbb{1}_S B\Psi_n(s)\,{\rm d}L(s)=\int_{(p,q]} \mathbb{1}_S B\Psi(s)\,{\rm d}L(s)\quad \text{in } L_P^0(\Omega,V),\]
On the other hand, by the continuity of $B$, we have 
\[\lim_{n\rightarrow \infty}\mathbb{1}_{S}B \left(\int_{(p,q]} \Psi_n(s)\,{\rm d}L(s)\right)=\mathbb{1}_{S}B \left(\int_{(p,q]} \Psi(s)\,{\rm d}L(s)\right)\quad \text{in } L_P^0(\Omega,V). \]
Since equality \eqref{eq.op-commutes-int} holds for each $\Psi_n$, the proof is completed.

(b):  We first establish that the process $\langle \Psi^\dagger\Phi,\cdot\rangle$ is $L$-integrable. Since $\Phi$ is uniformly bounded and the space of $L$-integrable predictable processes is a vector space, dividing by the uniform bound of $\Phi$ if necessary, we may assume that $\Phi$ is uniformly bounded by $1$. By noting that $\langle \Psi(s)^\dagger\Phi(s),\cdot\rangle= \langle \Phi(s) ,\cdot\rangle \circ \Psi(s)$ for all $s\in [0,T]$, we may use Lemma \ref{le.modular_domination} to establish that there exists a constant $c>0$ such that $m_L(\langle \Psi^\dagger\Phi,\cdot\rangle)\leq c\, m_L(\Psi)<\infty$ almost surely. The fact that $\langle \Psi^\dagger\Phi,\cdot\rangle$ is $L$-integrable now follows from Theorem \ref{pred_iff_integrable}.

We first show the identity under the assumption that $\Phi$ is of the form
\begin{equation}\label{eq.step-H}\Phi(\omega, t)=\sum_{i=1}^{n-1} \left(\sum_{k=1}^{N(i)}h_{i,k}\mathbbm{1}_{A_{i,k}}(\omega)\right) \mathbbm{1}_{ (t_i,t_{i+1}]}(t),
\end{equation} 
where $0=t_1<\cdots < t_{n}=t$, $A_{i,k} \in \mathcal{F}_{t_i}$ and $h_{i,k} \in H$ for all $i=1,...,n-1$ and $k=1,...,N(i)$. By Part (a) of this lemma,  we obtain
\begin{align*}
	\int_{(0,t]} \langle\Phi(s),\cdot\rangle\, {\rm d}Y(s)&=\sum_{i=1}^{n-1}\left\langle\sum_{k=1}^{N(i)}h_{i,k}\mathbbm{1}_{A_{i,k}},Y(t_{i+1})-Y(t_i)\right \rangle\\
	&=\sum_{i=1}^{n-1} \int_{(t_i,t_{i+1}]}\left\langle\Psi^\dagger(s)\Bigg(\sum_{k=1}^{N(i)}h_{i,k}\mathbbm{1}_{A_{i,k}}\Bigg),\cdot\right \rangle\,{\rm d}L(s)\\
	&=\int_{(0,t]} \langle\Psi^\dagger(s)\Phi(s),\cdot\rangle \, {\rm d}L(s).
\end{align*}

Now consider an arbitrary uniformly bounded, adapted, càglàd process $\Phi \colon \Omega \times [0,T] \to H$. By \cite[Lem.~1.2.19 and Rem.~1.2.20]{HNVW}, there exists a sequence $(\Phi_n)_{n \in \mathbb{N}}$ of processes of the form \eqref{eq.step-H} such that $\Phi_n \to \Phi$ $P_T$-a.e. and satisfying 
\begin{equation}\label{eq.bound on Phi_n}  c:=\esssup_{\omega\in\Omega}\sup_{s\in (0,T]}\sup_{n\in\N} \norm{\Phi_n(\omega,s)}<\infty. 
\end{equation}
 By Theorem~26.3 in \cite{met82}, for each fixed $t > 0$, we obtain
\begin{equation}\label{eq.n_limit_1}
	\lim_{n\rightarrow \infty}\int_{(0,t]} \langle \Phi_n(s),\cdot \rangle\, {\rm d}Y(s)=
	\int_{(0,t]} \langle \Phi(s),\cdot \rangle\, {\rm d}Y(s)
	\qquad\text{in } L_P^0(\Omega;\R).
\end{equation}
In order to complete the proof, it remains to establish that, for each $t>0$, 
\begin{equation}\label{eq.n_limit_2}
	\lim_{n\to\infty}\int_{(0,t]} \left\langle \Psi^\dagger(s)\Phi_n(s), \cdot \right\rangle\,{\rm d}L(s)
	= \int_{(0,t]} \left\langle \Psi^\dagger(s)\Phi(s), \cdot \right\rangle\,{\rm d}L(s)
	\qquad \text{in } L_P^0(\Omega;\R).
\end{equation}
According to Corollary~7.3 in~\cite{BR}, this is the case if and only if 
\begin{equation}\label{eq.modular_convergence}
	\lim_{n\to\infty} 
	E\!\left[\, m_L\!\left(\scapro{\Psi^\dagger(\Phi_n-\Phi)}{\cdot}\right)\wedge 1 \right]
	= 0.
\end{equation}
As a first step, we observe that, since $\Psi$ is $L$-integrable, there exists a set $\Omega_\Psi \subseteq \Omega$ with $P(\Omega_\Psi)=1$ such that, for every $\omega \in \Omega_\Psi$, 
\[m_L(\Psi(\omega))=\int_{(0,T]} \Big( \zeta_L(\Psi(\omega,t))+\eta_L(\Psi(\omega,t))+\big(\norm{\Psi(\omega,t)}_{2}^2\wedge 1\big) \Big)\,\d t<\infty.\]
By Lemma~\ref{le.modular_domination}, it follows that, for all $\omega \in \Omega_\Psi$ and $t \in (0,T]$, 
\begin{align*}
    \zeta_L\left(\scapro{\Psi^\dagger(\omega,t)(\Phi_n-\Phi)(\omega,t)}{\cdot}\right)&\leq \max\{2c,1\}\, \zeta_L(\Psi(\omega,t));\\
   \eta_L\left(\scapro{\Psi^\dagger(\omega,t)(\Phi_n-\Phi)(\omega,t)}{\cdot}\right)&\leq d_{2c}\left(\eta_L(\Psi(\omega,t))+\zeta_L(\Psi(\omega,t))\right).
\end{align*}
Moreover, a straightforward calculation yields, for all $\omega \in \Omega_\Psi$ and $t \in (0,T]$, 
\[
\|\langle \Psi^\dagger(\omega,t)(\Phi_n-\Phi)(\omega,t), \cdot \rangle\|_{2}^2
= \|\langle (\Phi_n-\Phi)(\omega,t), \cdot \rangle \circ \Psi(\omega,t)\|_{2}^2
\le 2\,\|\Psi(\omega,t)\|_{2}^2.
\]
Using the above bounds, we may apply Lebesgue’s dominated convergence theorem for each fixed $\omega \in \Omega_\Psi$, with the dominating function
\[
t \mapsto 3\,\zeta_L(\Psi(\omega,t))
+ \eta_L(\Psi(\omega,t))
+ 2\big(\|\Psi(\omega,t)\|_{2}^2 \wedge 1\big),
\]
to obtain
\[
\lim_{n\to\infty}
m_L \big(\langle \Psi^\dagger(\omega)(\Phi_n-\Phi)(\omega), \cdot \rangle\big)
= 0
\qquad \text{for all } \omega \in \Omega_\Psi.
\]
Since $P(\Omega_\Psi)=1$, this proves \eqref{eq.modular_convergence} and thus completes the proof of~(b).

(c): Let $\sigma:=\tau \wedge T$ and define $\sigma_n=2^{-n}[2^n\sigma+1]$ and $N_n=[2^n\,T]$ for each $n \in \mathbb{N}$, where $[x]$ denotes the integer part of any real $x$. Define the following subset of $\Omega\times [0,\infty)$: 
	\begin{equation*}
		J_n:=\bigcup_{k=0}^{N_n} \{\sigma\geq k2^{-n}\}\times (k2^{-n},(k+1)2^{-n}].
	\end{equation*}
	For each $n \in \mathbb{N}$, we have
    \begin{align*}
        Y(\sigma_n)&=\sum_{k=0}^{N_n} \mathbb{1}_{\left\{{k2^{-n} \leq \sigma < (k+1)2^{-n}}\right\}}Y((k+1)2^{-n})\nonumber\\
        &=\sum_{k=0}^{N_n} \left( \mathbb{1}_{\left\{{k2^{-n} \leq \sigma }\right\}}- \mathbb{1}_{\left\{ {(k+1)2^{-n} }\le \sigma\right\}}\right) Y((k+1)2^{-n})\nonumber\\
		&=\sum_{k=0}^{N_n} \mathbb{1}_{\{k2^{-n}\le \sigma\}}\Big(Y((k+1)2^{-n})-Y(k2^{-n})\Big).
    \end{align*}
Since  the filtration $\F$ is right-continuous, the set $\{ k2^{-n}\le \sigma\}$ is $\mathcal{F}_{k2^{-n}}$-measurable. Hence, by  Part (a) of this lemma and linearity of the integral, we obtain
    \begin{align*}
        Y(\sigma_n)&= \sum_{k=0}^{N_n} \mathbb{1}_{\{\sigma \geq k2^{-n}\}}\int_{(k2^{-n},(k+1)2^{-n}]} \Psi \,{\rm d}L\nonumber\\
        &=\sum_{k=0}^{N_n} \int_{(0,T]} \mathbb{1}_{(k2^{-n},(k+1)2^{-n}]\times \{\sigma\geq k2^{-n}\}}\Psi \, {\rm d}L
        =\int_{(0,T]} \mathbb{1}_{J_n}\Psi\, {\rm d}L. 
    \end{align*}
Since $Y$ has right continuous paths and    $\sigma_n \downarrow \sigma= \tau \wedge T$, we have 
	\[Y(\tau \wedge T) =\lim_{n \rightarrow \infty}Y(\sigma_n) .\]
On the other hand, applying Theorem \ref{th_dom_stoch_conv} with $\Psi$ as an integrable dominating function shows 
	\[\lim_{n\rightarrow \infty}\int_{(0,T]} \mathbb{1}_{J_n}\Psi\, {\rm d}L = \int_{(0,T]} \mathbb{1}_{(0,\tau \wedge T]}\Psi\, {\rm d}L,\]
which completes the proof.
\end{proof}

\begin{lemma}\label{le.quadratic_variation_of_integral}
	Let $L$ be a cylindrical L\'evy process in $U$, and let $\Psi$ be a predictable, $L_2(U,H)$-valued process that is integrable with respect to $L$. Define the integral process $(Y(t):\, t\in [0,T])$  by $Y(t):=\int_{(0,t]} \Psi\,{\rm d}L$. Then, for every uniformly bounded, real-valued, non-negative, adapted c\`agl\`ad process $Z$, the following identity holds:
	\begin{equation}\label{eq.quad_var_int_proc}
		\int_{(0,t]} Z(s)^2\;{\rm d}[Y](s)=\left[\int_{(0,{\cdot}]} Z(s)\Psi(s)\,{\rm d}L(s)\right](t)
		\qquad\text{for all }t\in [0,T].
	\end{equation}
\end{lemma}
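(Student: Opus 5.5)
The plan is to reduce the identity to a statement about semimartingale integrals, where standard calculus applies, and then move it over to the cylindrical integral using Lemma~\ref{le.prop_int_proc}(b).

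\emph{Step 1: reduction to $Y$.} Set $X(t):=\int_{(0,t]} Z(s)\Psi(s)\,{\rm d}L(s)$. By Remark~\ref{re.caglad_is_integrable} and Lemma~\ref{le.modular_domination}, with $\norm{Z}\le c$, the integrand $Z\Psi$ is $L$-integrable. I would first show that $X=\int_{(0,\cdot]} Z\,{\rm d}Y$ in the sense of \cite{met82}, i.e.\ as a real c\`agl\`ad integrand times the semimartingale $Y$. For $Z$ a simple predictable step process this is Lemma~\ref{le.prop_int_proc}(a), applied with $B=z_{i,k}\Id_H$ and $S=A_{i,k}$, and then summed. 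For general $Z$, approximate by uniformly bounded simple $Z_n\to Z$ $P_T$-a.e., as in the proof of Lemma~\ref{le.prop_int_proc}(b). The left-hand side then converges by \cite[Th.~26.3]{met82}. The right-hand side converges in $\dem_T$ by Corollary~\ref{co.bdd_stoch_convergence}, or by Theorem~\ref{th_dom_stoch_conv} with dominating process $\max\{c,1\}\Psi$ via Lemma~\ref{le.modular_domination}. This gives $X=Z\cdot Y$ up to indistinguishability.

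\emph{Step 2: the semimartingale identity.} It remains to show $[Z\cdot Y]=Z^2\cdot[Y]$ for an $H$-valued semimartingale $Y$ and a bounded real c\`agl\`ad adapted $Z$. I would use definition \eqref{def.quad_var} and do the simple case first. For $Z=\sum_i \zeta_i\1_{(t_i,t_{i+1}]}$ with $\zeta_i$ being $\F_{t_i}$-measurable, on each interval $(t_i,t_{i+1}]$ we have $X(t)-X(t_i)=\zeta_i(Y(t)-Y(t_i))$. Expanding
\[
\norm{X(t)}^2-\norm{X(t_i)}^2-2\int_{(t_i,t]}\langle X(s-),\cdot\rangle\,{\rm d}X(s)
\]
and using the associativity $\int \langle X(s-),\cdot\rangle\,{\rm d}(\zeta_i Y)=\zeta_i\int\langle X(s-),\cdot\rangle\,{\rm d}Y$ from \cite{met82}, together with the same expansion for $Y$, gives $\zeta_i^2([Y](t)-[Y](t_i))$. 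Here the cross terms $\langle X(t_i),Y(t)-Y(t_i)\rangle$ cancel against the integral of the constant part $X(t_i)$. Summing over $i$ yields the claim for simple $Z$.

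\emph{Step 3: passage to general $Z$.} Take simple $Z_n\to Z$ pointwise, uniformly bounded by $c$. Then $Z_n^2\cdot[Y]\to Z^2\cdot[Y]$ pathwise by dominated convergence for Lebesgue--Stieltjes integrals, since $[Y]$ is increasing. For the other side, Step 1 gives $X_n:=\int Z_n\Psi\,{\rm d}L\to X$ in $\dem_T$ by Corollary~\ref{co.bdd_stoch_convergence} (via Theorem~\ref{th_dom_stoch_conv}), and Lemma~\ref{le.cont_of_quad_variation} then gives $[X_n]\to[X]$ in $\dem_T$, hence in ucp. Identifying the limits yields \eqref{eq.quad_var_int_proc} for all $t\in[0,T]$ simultaneously, since both sides are c\`adl\`ag.

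\emph{Main obstacle.} I expect the hardest part to be the convergence of $X_n$ to $X$ in $\dem_T$, because the paper's dominated convergence results are formulated for $\zeta_L+\eta_L$ rather than for norms. The needed domination $(\zeta_L+\eta_L)(Z_n\Psi)\le C(\zeta_L+\eta_L)(\Psi)$ should follow from Lemma~\ref{le.modular_domination} with $D=Z_n(\omega,t)\Id_H$ and $r=c$. A further subtlety is that Theorem~\ref{th_dom_stoch_conv} asks for the domination with the same functional on both sides. I would handle this by taking as dominating process a scalar multiple of $\Psi$ and, if necessary, applying Lemma~\ref{le.modular_domination} once more to compare $(\zeta_L+\eta_L)(C\Psi)$ with $(\zeta_L+\eta_L)(\Psi)$.
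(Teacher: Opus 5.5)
Your proof is correct in substance and has the same architecture as the paper's. You prove the identity for simple $Z$, approximate a general $Z$ by uniformly bounded simple $Z_n$, and get $\int Z_n\Psi\,\d L\to\int Z\Psi\,\d L$ in $\dem_T$ from stochastic dominated convergence. You then transfer this to the brackets with Lemma~\ref{le.cont_of_quad_variation}, and treat $\int Z_n^2\,\d[Y]$ by pathwise dominated convergence.

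The real difference is the simple case. The paper never identifies $\int Z\Psi\,\d L$ with a M\'etivier integral $\int Z\,\d Y$. Instead it writes $\int_{(0,t]}Z^2\,\d[Y]$ as a limit of Riemann sums $\sum Z(t_{k,n})^2\norm{Y(t_{k+1,n})-Y(t_{k,n})}^2$ over partitions containing the grid points $t_i$ of $Z$ \cite[Cor.~26.7.2]{met82}. It rewrites each summand as $\norm{\int_{(t_{k,n},t_{k+1,n}]}Z\Psi\,\d L}^2$, which is Lemma~\ref{le.prop_int_proc}(a) in disguise, and recognises the limit as the bracket by \cite[Th.~26.5]{met82}. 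You instead verify the identity algebraically from \eqref{def.quad_var}. Your computation is right, and the cross terms do cancel. This avoids partition limits, at the cost of the associativity rule $\int H\,\d(\zeta_i Y)=\zeta_i\int H\,\d Y$ on $(t_i,t_{i+1}]$ from M\'etivier's calculus.

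Your Step~1 for general $Z$ is superfluous, since Step~3 only uses $X_n=Z_n\cdot Y$ for simple $Z_n$. Your choice of $Z_n\to Z$ pointwise, rather than merely $P_T$-a.e., is the right one, because $\d[Y]$ can charge Lebesgue-null sets. Left-endpoint dyadic discretisation achieves this by left-continuity.

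Two justifications in Step~3 need fixing.

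\emph{Corollary~\ref{co.bdd_stoch_convergence}.} It does not apply: it requires the integrands $Z_n\Psi$ to be uniformly bounded, whereas $\Psi$ is an arbitrary $L$-integrable process.

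\emph{Lemma~\ref{le.modular_domination}.} It does not deliver the hypothesis of Theorem~\ref{th_dom_stoch_conv}. It only gives $(\zeta_L+\eta_L)(Z_n\Psi)\le C(\zeta_L+\eta_L)(\Psi)$ with a constant $C$ possibly exceeding $1$. Applying it again to $C\Psi$ bounds $(\zeta_L+\eta_L)(C\Psi)$ from above, which is the wrong direction.

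What works is the elementary monotonicity $(\zeta_L+\eta_L)(zF)\le(\zeta_L+\eta_L)(F)$ for real $\abs{z}\le1$, which holds because:
\begin{itemize}
\item $\int(\norm{h}^2\wedge1)\,\lambda_{zF}(\d h)=\int(\norm{zh}^2\wedge1)\,\lambda_F(\d h)$;
\item ${\rm Tr}(z^2FQF^\dagger)\le{\rm Tr}(FQF^\dagger)$;
\item $zO$ lies in the unit ball of $L(H)$ whenever $O$ does.
\end{itemize}
Taking $F=c\Psi$ and $z=Z_n/c$ gives domination by $\Upsilon=c\Psi$, as you intended. The paper achieves the same by normalising $\abs{Z}\le1$, using quadratic homogeneity of both sides, and citing \cite[Rem.~3.6]{BR}.
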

\begin{proof} Fix $t\ge 0$, and assume first that $Z$ is of the form
	\begin{align} \label{eq.step-simple-H}
		Z(s)=\sum_{i=1}^{m-1} Z_i \mathbbm{1}_{ (t_i,t_{i+1}]}(s) \qquad\text{for }s\in [0,t], 
	\end{align}
	where $0=t_1<\cdots < t_{m}=t$, and each $Z_i$ is a real-valued, $\mathcal{F}_{t_i}$-measurable random variable. Let  $(\pi_n)_{n \in \mathbb{N}}$ be a sequence of nested partitions of $[0,t]$ with  mesh tending to $0$ and such that $\{t_1,\dots, t_n\}\subseteq \pi_n$ for all $n\in\N$. Corollary 26.7.2 in \cite{met82} yields
		\begin{align*}    
		\int_{(0,t]} Z(s)^2 {\rm d}[Y](s)
		&= \lim_{n \rightarrow \infty} \sum_{k\in \pi_n} Z(t_{k,n})^2 \norm{Y(t_{k+1,n})-Y(t_{k,n})}^2
		\qquad \text{in } L_P^0(\Omega;\R). 
	\end{align*}
    Since $Z$ is non-negative and constant on each interval $(t_{k,n},t_{k+1,n}]$, it follows that 
    \[
     Z(t_{k,n})^2 \norm{Y(t_{k+1,n})-Y(t_{k,n})}^2
     = \norm{Z(t_{k,n}) \int_{(t_{k,n},t_{k+1,n}]} \Psi\,{\rm d}L}^2
     = \norm{ \int_{(t_{k,n},t_{k+1,n}]} Z \Psi\,{\rm d}L}^2.
    \]
   Hence, by Theorem~26.5 in~\cite{met82}, we obtain in $L_P^0(\Omega;\R)$ that
	\begin{align*}    
		\int_{(0,t]} Z(s)^2 {\rm d}[Y](s)
		&= \lim_{n \rightarrow \infty} \sum_{k\in \pi_n}  \norm{ \int_{(t_{k,n},t_{k+1,n}]} Z \Psi\,{\rm d}L}^2
		=
		\left[ 
		\int_{(0,\cdot]} Z(s)\Psi(s) {\rm d}L(s)
		\right](t),
	\end{align*}
which proves Equation \eqref{eq.quad_var_int_proc} for $Z$ of the form \eqref{eq.step-simple-H}.

Now let $Z$ be an arbitrary uniformly bounded, real-valued, non-negative, adapted c\`agl\`ad process. 
Since both sides of~\eqref{eq.quad_var_int_proc} are quadratically homogeneous, we may, without loss of generality, assume that $\lvert Z\rvert \le 1$ $P$-a.s.

By \cite[Rem. 3.6]{BR}, we have $m_L(Z\Psi)\leq m_L(\Psi)<\infty$, which guarantees  that $Z\Psi$ is $L$-integrable. Let $(Z_n)_{n \in \mathbb{N}}$ be a sequence of processes of the form \eqref{eq.step-simple-H} converging $P_T$-a.e.\ to $Z$. Since $Z\le 1$, we may choose the sequence $(Z_n)_{n \in \mathbb{N}}$ so that $\lvert Z_n\rvert \le 1$ $P$-a.s.\ for all $n\in\N$. 
Then, again by~\cite[Rem.~3.6]{BR}, it follows that $m_L(Z_n\Psi)\le m_L(\Psi)$ $P$-a.s., implying that each $Z_n\Psi$ is $L$-integrable. 
Applying Theorem~\ref{th_dom_stoch_conv}, we obtain
	\begin{equation*}
		\lim_{n \rightarrow \infty}\dem_T\left(\int_{(0,\cdot]} Z_n(s)\Psi(s) {\rm d}L(s), \int_{(0,\cdot]} Z(s)\Psi(s) {\rm d}L(s) \right)=0.
	\end{equation*}
By Lemma~\ref{le.cont_of_quad_variation}, this implies
	\begin{equation}\label{eq.step3_1}
		\lim_{n \rightarrow \infty}\dem_T\left(\left[\int_{(0,\cdot]} Z_n(s)\Psi(s) {\rm d}L(s)\right],\left[ \int_{(0,\cdot]} Z(s)\Psi(s) {\rm d}L(s)\right]\right)=0.
	\end{equation}
	On the other hand, since  $\sup_{n \in \mathbb{N}}\abs{Z_n}\leq 1$, Lebesgue's  dominated convergence theorem yields,  for each $t \geq 0$, 
	\begin{equation}\label{eq.step3_2}
		\lim_{n \rightarrow \infty}\int_{(0,t]} Z_n(s)^2 {\rm d}[Y](s)=\int_{(0,t]} Z(s)^2 {\rm d}[Y](s) \quad \text{for $P$-a.a.\ }\omega\in \Omega.
	\end{equation}
	Combining Equations \eqref{eq.step3_1} and \eqref{eq.step3_2} using the initial step, the result follows.
\end{proof}

\section{Existence and uniqueness of solution}

We consider an abstract evolution equation of the form 
\begin{align}\label{eq.equation}
	\begin{split}
\d X(t)&= \big(A X(t)+ \drift(X(t))\big) \d t +  \diff(X(t-))\, \d L(t), \qquad t\ge 0,\\
  X(0)&= X_0, 
\end{split}
\end{align}
where $A$ is the generator of a strongly continuous semigroup $(S(t))_{t\ge 0}$ on $H$, and  
 $\drift\colon H\to H$,  $\diff\colon H\to L_2(U,H)$ are given maps. The initial condition is specified  by an $H$-valued, $\F_0$-measurable random variable $X_0$, and the noise is modelled by an arbitrary cylindrical L\'evy process $L$ on $U$.  
\begin{definition}
A process \(X\) is called a \emph{mild solution} to \eqref{eq.equation} if it is an \(H\)-valued, adapted, c\`adl\`ag process such that, for every \(t \ge 0\),
\[
X(t)=S(t)X_0 + \int_{(0,t]} S(t-s)\drift(X(s))\, \d s + \int_{(0,t]} S(t-s)\diff(X(s-))\, \d L(s) \quad P\text{-a.s.}
\] 
A mild solution is called unique if any two mild solutions are indistinguishable.
\end{definition}
The main finding of this work is the following existence and uniqueness result:
\begin{theorem}\label{th.existence}
Assume that the following assumptions are satisfied:
\begin{enumerate}
	\item [{\rm (A1)}] $(S(t))_{t\ge 0}$  is a contraction semigroup on $H$;
	\item [{\rm (A2)}] there exists a constant $c_\drift$ such that 
	\[   \norm{\drift(h_1)-\drift(h_2)}\le c_\drift \norm{h_1 -h_2}\qquad\text{for all }h_1,h_2\in H; \]
	\item[{\rm (A3)}] there exists a constant $c_\diff$ such that 
	\[   \norm{\diff(h_1)-\diff(h_2)}_{L_2(U,H)}\le c_\diff \norm{h_1 -h_2}\qquad\text{for all }h_1,h_2\in H. \]
\end{enumerate}
Then there exists a unique mild solution to Equation \eqref{eq.equation}.
\end{theorem}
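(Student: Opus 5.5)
We follow the strategy sketched in the introduction: an adaptive Euler–Peano scheme combined with a stochastic Grönwall inequality (Proposition~\ref{pro.ucp_convergence}, due to Lowther). For an adapted càdlàg $H$-valued process $Y$, define the mild solution map
\[
\mathcal{M}(Y)(t):=S(t)X_0+\int_{(0,t]} S(t-s)\drift(Y(s))\,\d s+\int_{(0,t]} S(t-s)\diff(Y(s-))\,\d L(s).
\]
The convolution integral is not a semimartingale in general, because the integrand depends on $t$. To handle this, we first treat integrands that are piecewise frozen. For a fixed $\epsilon>0$ we construct recursively stopping times $0=\tau_0\le\tau_1\le\cdots$ and a process $X^\epsilon$ that is piecewise constant on $[\tau_k,\tau_{k+1})$. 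We set $X^\epsilon(\tau_k):=\mathcal{M}(X^\epsilon)(\tau_k)$. The next time $\tau_{k+1}$ is the first time after $\tau_k$ at which $\norm{\mathcal{M}(X^{\epsilon})(t)-X^\epsilon(\tau_k)}\ge\epsilon$.

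On each stochastic interval the integrand $\diff(X^\epsilon(s-))$ equals $\diff(X^\epsilon(\tau_k))$, which is $\F_{\tau_k}$-measurable. Hence the stochastic integral can be written, using Lemma~\ref{le.prop_int_proc}(a),(c), as $S(t-\tau_k)$ applied to an increment of an integral process, plus earlier contributions propagated by the semigroup. The process $\mathcal{M}(X^\epsilon)$ is therefore càdlàg and adapted, and the $\tau_k$ are genuine stopping times. A key point is that $\tau_k\uparrow\infty$ almost surely. I would argue this by contradiction: on $\{\lim\tau_k=\tau<\infty\}$ the increments of $\mathcal{M}(X^\epsilon)$ over $[\tau_k,\tau_{k+1}]$ have size at least $\epsilon$, which contradicts the existence of left limits at $\tau$. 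This is the step where one needs care about strong continuity, perhaps by first approximating with the Yosida approximation. By construction we obtain $\sup_t\norm{X^\epsilon(t)-\mathcal{M}(X^\epsilon)(t)}\le\epsilon$.

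The second step is to compare two approximations, $X^\epsilon$ and $X^\delta$, or later two solutions. Set $D=X^\epsilon-X^\delta$. Then $D=\mathcal{M}(X^\epsilon)-\mathcal{M}(X^\delta)+R$ with $\norm{R}\le\epsilon+\delta$, so that
\[
D(t)=R(t)+\int_{(0,t]}S(t-s)\big(\drift(X^\epsilon(s))-\drift(X^\delta(s))\big)\d s+\int_{(0,t]}S(t-s)\big(\diff(X^\epsilon(s-))-\diff(X^\delta(s-))\big)\d L(s).
\]
Using (A1), I would estimate $\norm{D}^2$ by an Itô-type expansion of the convolution, that is, through the semigroup contraction property: the mild process is dominated by the strong one once one passes to the dilation or to the Yosida approximation. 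This expansion produces a drift term bounded by $c_\drift\int\norm{D}^2\d s$, a local martingale-like integral $\int\langle D(s-),\cdot\rangle\,\d(\cdot)$ handled via Lemma~\ref{le.prop_int_proc}(b), and a quadratic variation term. Using Lemma~\ref{le.quadratic_variation_of_integral}, the quadratic variation is $[\int(\diff(X^\epsilon)-\diff(X^\delta))\d L]$. By (A3) and Lemma~\ref{le.modular_domination}, its growth is controlled by $\norm{D(s-)}^2$ times the increment of $[\int \diff'\,\d L]$, where $\diff'$ is a normalised difference quotient. This yields an inequality of the form
\[
\norm{D(t)}^2\le C(\epsilon+\delta)^2+\int_{(0,t]}\norm{D(s-)}^2\,\d A(s)+\int_{(0,t]}\langle D(s-),\cdot\rangle\,\d N(s),
\]
with $A$ increasing and $N$ a semimartingale, both controlled in $\dem$ uniformly in $\epsilon,\delta$. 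Proposition~\ref{pro.ucp_convergence} then gives $D\to0$ in $\ducp$ as $\epsilon,\delta\to0$.

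Finally, completeness of $(D_H,\ducp)$ provides a limit $X$. We pass to the limit in $X^\epsilon=\mathcal{M}(X^\epsilon)+R^\epsilon$: the drift term converges by Lipschitz continuity, and the stochastic integral converges by Theorem~\ref{th_dom_stoch_conv}, Corollary~\ref{co.bdd_stoch_convergence}, or via localisation with stopping times. This shows that $X$ is a mild solution. Uniqueness is the same Grönwall argument with $R=0$. The main obstacle is the second step: making a rigorous Itô-type inequality for $\norm{D}^2$ when $D$ is a stochastic convolution with a cylindrical integrator. I would first try to prove the estimate for bounded $A$ via Yosida approximations $A_n$. There the convolution is a genuine semimartingale, so the quadratic variation tools of Section~3 apply, and the estimate is uniform in $n$ by contractivity.
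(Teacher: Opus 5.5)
Your adaptive scheme is the paper's Euler--Peano construction, but your argument that $\tau_k\uparrow\infty$ has a genuine gap. On $\{\tau<\infty\}$, with $\tau:=\lim_k\tau_k$, the process $X^\epsilon$ is constructed only on $[0,\tau)$, and your contradiction assumes that $\mathcal{M}(X^\epsilon)$ has a left limit at $\tau$. Nothing in the construction gives you this. The jumps of $X^\epsilon$ at the $\tau_k$ are at least $\epsilon$ but are not bounded above, and through the Lipschitz feedback in $\drift$ and $\diff$ the approximation could a priori become unbounded as $t\uparrow\tau$. In that case nothing controls the $m_L$-functional of $\1_{(0,\tau\wedge T]}(s)\hat S(-s)i\,\diff(X^\epsilon(s-))$. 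That integrand need not be $L$-integrable, $\mathcal{M}(X^\epsilon)$ need not extend to $\tau$, and there is no left limit to contradict.

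Ruling out this explosion is exactly where the stochastic Grönwall lemma is needed \emph{inside} the construction. In Step~1 of Proposition~\ref{pro.approx_solution}, the paper applies Proposition~\ref{pro.ucp_convergence} to $\lambda_nX^{\tau_n\wedge T}$ for an arbitrary null sequence $(\lambda_n)$. It then uses Lemma~\ref{le.bounded_in_prob} to get $X^*(\tau\wedge T)<\infty$ a.s. Only after that are the stopped integrands integrable, so that Theorem~\ref{th_dom_stoch_conv} gives convergence in probability of $\1_{\{\tau\le T\}}X(\tau_n)$. This contradicts $\norm{X(\tau_{n+1})-X(\tau_n)}\ge\epsilon$ on $\{\tau\le T\}$, together with Step~0 ($\tau_{n+1}>\tau_n$), which rests on Lemma~\ref{le.Lambda-invariant}. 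Your own Grönwall step cannot be used here as written, because it compares two approximations that are already assumed to be globally defined. The obstacle is a priori boundedness, not strong continuity, and Yosida approximations do not address it.

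There are two further points.

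\textbf{The convolution does not factor through $S(t-\tau_k)$.} You claim that on $[\tau_k,t]$ the convolution equals $S(t-\tau_k)$ applied to an increment of an integral process. This is false: $S(t-s)$ with $s\in(\tau_k,t]$ would require $S(\tau_k-s)$ at negative times. Path regularity of stochastic convolutions is delicate in general. What makes $\mathcal{M}(X)$ càdlàg and adapted, and what supplies left limits once boundedness is known, is the unitary dilation available under (A1):
$\mathcal{M}(X)(t)=\pi\hat S(t)\bigl(iX_0+\int_{(0,t]}\hat S(-s)i\,\drift(X(s))\,\d s+\int_{(0,t]}\hat S(-s)i\,\diff(X(s-))\,\d L(s)\bigr)$,
with an $\hat H$-valued semimartingale inside (Lemma~\ref{le.cont_of_lambda}).

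\textbf{Your Cauchy step can be much simpler.} Under the same representation, $D-R=\pi\hat S(t)(\cdots)$. The integrands are $\hat S(-s)i\bigl(\drift(X^\epsilon(s-))-\drift(X^\delta(s-))\bigr)$ and $\hat S(-s)i\bigl(\diff(X^\epsilon(s-))-\diff(X^\delta(s-))\bigr)$, bounded by $c_\drift D^*(s-)$ and $c_\diff D^*(s-)$, and $\sup_t\norm{R(t)}\le\epsilon+\delta$. This is precisely the input format of Proposition~\ref{pro.ucp_convergence}, and it is how the paper proves Lemma~\ref{le.conv_to_solution}. No Itô expansion of $\norm{D}^2$ and no Yosida approximation is needed. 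By contrast, your inequality for $\norm{D}^2$, with a general increasing $A$ and semimartingale $N$, is not of the form that proposition accepts, so you would have to prove a separate Grönwall lemma for it.

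Your limit passage and uniqueness argument are fine once these points are repaired. For the limit passage the paper uses the $\ducp$-continuity of $\Lambda$ from Lemma~\ref{le.cont_of_lambda}.
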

The proof of this theorem is provided at the end of this section. One of the main ingredients of the proof is a stochastic Grönwall result, which we will prove first. To that end, we establish the following inequality:
\begin{lemma}\label{le.pos_semimartingale}
    If $X$ is a real-valued, strictly positive semimartingale then
    \[\ln\left(\frac{X^*(t)}{X(0)}\right)\leq \left(\int_{(0,\cdot ]} \frac{1}{X^*(s-)}\;{\rm d}X(s)\right)^*(t) \quad \text{a.s.\ for all $t\ge 0$.}\]
\end{lemma}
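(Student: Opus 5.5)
The plan is to compare the stochastic integral $Y(t):=\int_{(0,t]} \frac{1}{X^*(s-)}\,{\rm d}X(s)$ with $\ln(X^*/X(0))$ through an integration by parts against the finite-variation process $1/X^*$. Write $M:=X^*$. Since $X>0$, we have $M(t)=\sup_{s\le t}X(s)$. Thus $M$ is adapted, c\`adl\`ag, non-decreasing, with $M\ge X(0)>0$, and $1/M$ is an adapted, non-increasing c\`adl\`ag process bounded by $1/X(0)$. In particular $1/M(s-)$ is a bounded c\`agl\`ad integrand, so $Y$ is well defined.

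\textbf{Step 1 (product rule).} Apply It\^o's product rule to $X\cdot(1/M)$. Since $1/M$ has finite variation, the covariation term merges with the Stieltjes integral, and we get
\[
\frac{X(t)}{M(t)}-1=\int_{(0,t]}\frac{1}{M(s-)}\,{\rm d}X(s)+\int_{(0,t]}X(s)\,{\rm d}\Big(\frac1M\Big)(s),
\]
where the last integral is a pathwise Lebesgue--Stieltjes integral. Hence
\[
Y(t)=\frac{X(t)}{M(t)}-1-\int_{(0,t]}X(s)\,{\rm d}\big(1/M\big)(s).
\]

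\textbf{Step 2 (pathwise lower bound).} Show pathwise that
\[
-\int_{(0,t]}X(s)\,{\rm d}(1/M)(s)\ge \ln\frac{M(t)}{M(0)}.
\]
The key observation is that the measure $-{\rm d}(1/M)$ is carried by the set where $X(s)=M(s)$.
\begin{itemize}
\item At a jump time $s$ of $M$ we have $X(s)=M(s)$.
\item The continuous part of ${\rm d}M$ does not charge the set $\{X<M\}$. By right-continuity, $M$ is locally constant to the right of such a point. The only possible exceptions are the countably many points with $X(s-)=M(s)>X(s)$, and these are null for a continuous measure.
\end{itemize}
So we may replace $X(s)$ by $M(s)$ in the integral. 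For the continuous part, this gives $\int {\rm d}M^c/M$, which is exactly the continuous contribution to $\ln M$. At a jump, the contribution is
\[
M(s)\Big(\frac{1}{M(s-)}-\frac{1}{M(s)}\Big)=\frac{\Delta M(s)}{M(s-)}\ge\ln\frac{M(s)}{M(s-)},
\]
using $x\ge\ln(1+x)$. Summing the continuous and jump parts yields the bound. Together with Step 1 this gives, for all $s$,
\[
Y(s)\ge \frac{X(s)}{M(s)}-1+\ln\frac{M(s)}{X(0)}.
\]

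\textbf{Step 3 (choosing the evaluation time).} Fix $t$ and pick $s_n\le t$ with $X(s_n)\to M(t)$. This is possible by the definition of the supremum, even if the supremum is only attained as a left limit. Since $X(s_n)\le M(s_n)\le M(t)$, we get $X(s_n)/M(s_n)\to1$ and $M(s_n)\to M(t)$. Therefore
\[
Y^*(t)\ge\limsup_n Y(s_n)\ge\ln\big(M(t)/X(0)\big),
\]
which is the claim.

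\textbf{Main obstacle.} I expect Step 2 to be the delicate part: justifying rigorously that the continuous part of ${\rm d}M$ lives on $\{X=M\}$, and handling the jump terms carefully. A secondary point is making sure the product rule in Step 1 is used in its Metivier-type form, where the finite-variation factor produces $X(s)$ rather than $X(s-)$ in the Stieltjes term. Alternatively, one can write the formula with $X(s-)$ and absorb $\Delta X\,\Delta(1/M)$ explicitly; this gives the same jump contribution, since $X(s)=M(s)$ at jumps of $M$.
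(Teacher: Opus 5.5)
Your proposal is correct and follows essentially the same route as the paper. Your Steps 1--2 amount to the paper's identity $\int_{(0,t]}\frac{1}{X^*(s-)}\,{\rm d}X(s)=\frac{X(t)}{X^*(t)}-1+\int_{(0,t]}\frac{1}{X^*(s-)}\,{\rm d}X^*(s)$, obtained by integration by parts using that ${\rm d}X^*$ is carried by $\{X=X^*\}$, combined with the bound $\int_{(0,t]}\frac{1}{X^*(s-)}\,{\rm d}X^*(s)\ge\ln(X^*(t)/X(0))$ from $\ln(1+x)\le x$ at jumps; the differences are only cosmetic (product rule for $X\cdot(1/X^*)$ instead of $(X/X^*)\cdot X^*$, and your approximating sequence $s_n$ in place of the paper's hitting time $\tau$ with its two-case analysis).
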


\begin{proof} (The proof originates from \cite{Lowther}). 
Applying It{\^o}'s formula, \cite[Th.\ II.32] {Protter}, to the non-decreasing semimartingale $X^\ast$ and the function $\beta\mapsto \ln \beta$, we obtain, for each $t>0$, that
\begin{align*}
    \ln\left(\frac{X^*(t)}{X(0)}\right)
    &= \int_{(0,t]} \frac{1}{X^\ast(s-)} \, \d X^\ast (s) + \sum_{s\in (0,t]} \left( \ln X^\ast (s) - \ln X^\ast (s-) - \frac{1}{X^\ast (s-)}\Delta X^\ast (s)  \right). 
\end{align*}
Since it follows from the inequality $\ln (1+\beta)\le \beta$ for all $\beta>-1$  that the summation is non-positive, we conclude, for each $t>0$, that
\begin{align}\label{eq.ln_domination}
    \ln\left(\frac{X^*(t)}{X(0)}\right)
    &\le \int_{(0,t]} \frac{1}{X^*(s-)}\;{\rm d}X^*(s).
\end{align}
Since $X^*$ is nondecreasing, the process $1/X^*$ is nonincreasing, and hence also of finite variation. Define
\[
Y(t):=\frac{X(t)}{X^*(t)}, \qquad t\ge0,
\]
so that $X=Y\,X^*$. Applying the integration-by-parts formula for products of semimartingales
(see, e.g., \cite[Cor.~2 to Th.~II.22]{Protter}) yields
\[
\d X(s)=\d\bigl(Y(s)X^*(s)\bigr)
=
Y(s-)\,\d X^*(s)+X^*(s-)\,\d Y(s)+\d[Y,X^*](s).
\]
Dividing by $X^*(s-)$ and integrating over $(0,t]$ gives
\begin{align}\label{eq.decomp}
	\begin{split}
 &	\int_{(0,t]} \frac{1}{X^*(s-)}\,\d X(s)\\
&\qquad 	=
	\int_{(0,t]} \frac{Y(s-)}{X^*(s-)}\,\d X^*(s)
	+\bigl(Y(t)-Y(0)\bigr)
	+\int_{(0,t]} \frac{1}{X^*(s-)}\,\d[Y,X^*](s).
	\end{split}
\end{align}
Since $X^*$ has finite variation, its quadratic covariation with any semimartingale
is purely discontinuous. Hence, by \cite[Th.~II.28]{Protter},
\[
[Y,X^*](t)=Y(0)X^*(0)+\sum_{0<s\le t}\Delta Y(s)\,\Delta X^*(s),
\]
and therefore, for $0<s\le t$,
\[
\Delta[Y,X^*](s)=\Delta Y(s)\,\Delta X^*(s).
\]
Consequently, the stochastic integral with respect to the finite-variation process
$[Y,X^*]$ reduces to a sum over jumps:
\begin{equation}\label{eq.bracket_integral_sum}
	\int_{(0,t]} \frac{1}{X^*(s-)}\,\d [Y,X^*](s)
	=
	\sum_{0<s\le t}\frac{1}{X^*(s-)}\,\Delta Y(s)\,\Delta X^*(s).
\end{equation}
Observe that $\Delta X^*(s)\neq0$ only if $X$ attains a new maximum by a jump at time $s$.
In this case,
\[
X^*(s)=X(s),\qquad X^*(s-)=\sup_{r<s}X(r)\ge X(s-),
\]
so that
\[
Y(s)=1,\qquad Y(s-)=\frac{X(s-)}{X^*(s-)}\le1,
\qquad \Delta Y(s)=1-Y(s-).
\]
If $\Delta X^*(s)=0$, the corresponding summand in \eqref{eq.bracket_integral_sum} vanishes.
Hence,
\begin{equation}\label{eq.bracket_as_compensator}
	\int_{(0,t]} \frac{1}{X^*(s-)}\,\d [Y,X^*](s)
	=
	\sum_{\substack{0<s\le t\\ \Delta X^*(s)>0}}
	\frac{1-Y(s-)}{X^*(s-)}\,\Delta X^*(s).
\end{equation}
Decompose $X^*$ into its continuous and jump parts,
\[
X^*=(X^*)^c+\sum_{0<s\le\cdot}\Delta X^*(s).
\]
On the support of $\d(X^*)^c$, the running maximum increases continuously, and hence
$X(s)=X^*(s)$ at those times, implying $Y(s-)=1$. Therefore,
\[
\int_{(0,t]} \frac{Y(s-)}{X^*(s-)}\,\d (X^*)^c(s)
=
\int_{(0,t]} \frac{1}{X^*(s-)}\,d(X^*)^c(s).
\]
For the jump part we have
\[
\int_{(0,t]} \frac{Y(s-)}{X^*(s-)}\,\ d\Big(\sum_{0<u\le s}\Delta X^*(u)\Big)
=
\sum_{0<s\le t}\frac{Y(s-)}{X^*(s-)}\,\Delta X^*(s).
\]
Combining these identities with \eqref{eq.bracket_as_compensator} yields
\[
\int_{(0,t]} \frac{Y(s-)}{X^*(s-)}\,\d X^*(s)
+\int_{(0,t]} \frac{1}{X^*(s-)}\,\d [Y,X^*](s)
=
\int_{(0,t]} \frac{1}{X^*(s-)}\,\d X^*(s).
\]
Substituting this equality into \eqref{eq.decomp} gives
\begin{equation}\label{eq.change-final}
	\int_{(0,t]} \frac{1}{X^*(s-)}\,\d X(s)
	=
	\frac{X(t)}{X^*(t)}-1
	+\int_{(0,t]} \frac{1}{X^*(s-)}\,\d X^*(s),
	\qquad \text{$P$-a.s.\ for all } t\ge0. 
\end{equation}	
Fix \(t\ge0\), and work on a sample path outside a \(P\)-null set on which \eqref{eq.change-final} holds for all times. Define
\[
\tau:=\inf\{s\in[0,t]:X^*(s)=X^*(t)\}.
\]
Since \(X^*\) is nondecreasing and c\`adl\`ag, we have $\tau \leq t$, and \(X^*\) is constant on \([\tau,t]\).
By definition of \(\tau\), either \(X(\tau)=X^*(\tau)\) or \(X(\tau-)=X^*(\tau-)=X^*(t)\).

If \(X(\tau)=X^*(\tau)\), then applying \eqref{eq.change-final} at time \(\tau\) yields
\[
\int_{(0,\tau]} \frac{1}{X^*(s-)}\,{\rm d}X(s)
=
\int_{(0,\tau]} \frac{1}{X^*(s-)}\,{\rm d}X^*(s).
\]
Since \(X^*\) is constant on \([\tau,t]\), it follows that
\[
\int_{(0,\tau]} \frac{1}{X^*(s-)}\,{\rm d}X^*(s)
=
\int_{(0,t]} \frac{1}{X^*(s-)}\,{\rm d}X^*(s).
\]
If \(X(\tau-)=X^*(\tau-)=X^*(t)\), then, letting \(t\uparrow\tau\) in \eqref{eq.change-final} gives
\[
\int_{(0,\tau)} \frac{1}{X^*(s-)}\,{\rm d}X(s)
=
\int_{(0,\tau)} \frac{1}{X^*(s-)}\,{\rm d}X^*(s).
\]
Since \(X^*\) is constant on \([\tau,t]\) and  \(\Delta X^*(\tau)=0\), we obtain
\[
\int_{(0,\tau)} \frac{1}{X^*(s-)}\,{\rm d}X^*(s)
=
\int_{(0,t]} \frac{1}{X^*(s-)}\,{\rm d}X^*(s).
\]
Therefore, in either case,
\[
\left(\int_{(0,\cdot]} \frac{1}{X^*(s-)}\,{\rm d}X(s)\right)^*(t)
\ge
\int_{(0,t]} \frac{1}{X^*(s-)}\,{\rm d}X^*(s).
\]
Combining this with \eqref{eq.ln_domination} completes the proof.
\end{proof}

Although the next result does not provide a quantitative bound in terms of an exponential function, we refer to it as a stochastic Grönwall result, as it captures the typical application of the Grönwall inequality.
\begin{proposition}\label{pro.ucp_convergence}
    Let $H,U$ and $V$ be separable Hilbert spaces, and $(Y_n)_{n \in \mathbb{N}}$ be a sequence of $H$-valued processes of the form
    \[Y_n(t)={S_n(t)}\left(\int_{(0,t]} \drift_n(s)\,{\rm d}s+\int_{(0,t]} \diff_n(s)\,{\rm d}L(s)\right)+C_n(t)
    \qquad\text{for all }t\ge 0, \]
where, for some constants $K_1,K_2>0$,  the following conditions hold: 
\begin{enumerate}
    {\item [{\rm (1)}] the sequence $(S_n)_{n\in \mathbb{N}}$ consists of $\mathcal{L}(V,H)$-valued, strongly continuous, deterministic functions satisfying $\sup_{n \in \mathbb{N}}\sup_{t\geq 0}\norm{S_n(t)}_{V\rightarrow H}\leq 1$;}
    \item [{\rm (2)}] the sequence $(\drift_n)_{n \in \mathbb{N}}$ consists of {$V$-valued}, $P$-a.s.\ Bochner integrable, predictable processes satisfying, for all $n \in \mathbb{N}$ and $t\geq 0$,  that $\drift_n^*(t)\leq K_1 Y_n^*(t-)$;
    \item [{\rm (3)}] the sequence $(\diff_n)_{n \in \mathbb{N}}$ consists of  {$L_2(U,V)$-valued},  $L$-integrable, predictable processes satisfying, for all $n \in \mathbb{N}$ and $t\geq 0$,  that $\diff_n^*(t)\leq K_2 Y_n^*(t-)$;    \item  [{\rm (4)}] the sequence   $(C_n)_{n \in \mathbb{N}}$ consists of   $H$-valued, càdlàg, adapted processes converging to $0$ uniformly on compacts in probability.
\end{enumerate}
Then  $(Y_n)_{n \in \mathbb{N}}$ converges to $0$ uniformly on compacts in probability.
\end{proposition}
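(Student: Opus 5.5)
The plan is to compare $Y_n$ with a semimartingale and then apply the logarithmic estimate of Lemma~\ref{le.pos_semimartingale} to a strictly positive process built from that semimartingale. Write
\[
Z_n(t):=\int_{(0,t]}\drift_n(s)\,\d s+\int_{(0,t]}\diff_n(s)\,\d L(s).
\]
This is a $V$-valued semimartingale, since the stochastic integral is a semimartingale by \cite{BR}. By condition (1) we have $\norm{Y_n(t)}\le \norm{Z_n(t)}+\norm{C_n(t)}$, and hence $Y_n^*\le Z_n^*+C_n^*$. It therefore suffices to show $Z_n\to 0$ ucp.

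\textbf{Localisation.} Take $(\epsilon_n)$ from Lemma~\ref{le.ucp_consequence} applied to $(C_n)$, and set $\tau_n:=\inf\{t: C_n^*(t)>\epsilon_n\}$. Then $P(\tau_n\le T)\to 0$ for every $T$, so it is enough to prove $Z_n^{\tau_n}\to 0$ ucp. On $(0,\tau_n]$ we have $Y_n^*(s-)\le Z_n^*(s-)+\epsilon_n$.

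\textbf{The positive semimartingale.} Define $X_n:=\epsilon_n^2+\norm{Z_n^{\tau_n}}^2$, which is strictly positive with $X_n(0)=\epsilon_n^2$. By \eqref{def.quad_var},
\[
\d X_n=2\scapro{Z_n(s-)}{\cdot}\,\d Z_n^{\tau_n}+\d[Z_n^{\tau_n}].
\]
Hence
\begin{align*}
\int_{(0,t]}\frac{\d X_n(s)}{X_n^*(s-)}
&=2\int_{(0,t\wedge\tau_n]}\frac{\scapro{Z_n(s-)}{\drift_n(s)}}{X_n^*(s-)}\,\d s
+2\int_{(0,t\wedge\tau_n]}\Big\langle \diff_n(s)^\dagger\frac{Z_n(s-)}{X_n^*(s-)},\cdot\Big\rangle\,\d L(s)\\
&\quad+\Big[\int_{(0,\cdot\wedge\tau_n]} X_n^*(s-)^{-1/2}\diff_n(s)\,\d L(s)\Big](t).
\end{align*}
The middle term uses Lemma~\ref{le.prop_int_proc}(b),(c). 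The last term uses Lemma~\ref{le.quadratic_variation_of_integral} with the bounded càglàd weight $X_n^*(s-)^{-1/2}\le \epsilon_n^{-1}$; the drift part contributes no quadratic variation.

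Using conditions (2) and (3) together with $(Z^*+\epsilon)Z^*\le \tfrac32(\epsilon^2+Z^{*2})$ and $(Z^*+\epsilon)\le \sqrt2\,(\epsilon^2+Z^{*2})^{1/2}$, all three integrands are bounded by constants independent of $n$. The drift integrand is bounded by $3K_1$. The $L$-integrands have Hilbert--Schmidt norm at most $3K_2$ and $\sqrt2 K_2$, respectively. Thus the right-hand side equals $R_n(t)+I_n(t)+[J_n](t)$, where $\abs{R_n(t)}\le 3K_1t$ and $I_n=\int\Theta_n\,\d L$, $J_n=\int\Xi_n\,\d L$ with $\norm{\Theta_n},\norm{\Xi_n}_{\rm HS}\le c$ uniformly.

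\textbf{Tightness (main step).} The expected obstacle is to show that $I_n^*(T)$ and $[J_n](T)$ are bounded in probability uniformly in $n$. The integrands vary with $n$ and there is no moment or Lévy--Itô structure to exploit. I would use Lemma~\ref{le.bounded_in_prob}. For any real $\lambda_n\to 0$, the integrands $\lambda_n\Theta_n$ converge to $0$ pointwise and are uniformly bounded, so Corollary~\ref{co.bdd_stoch_convergence} gives $\dem_T(\lambda_n I_n,0)\to 0$. In particular $\lambda_n I_n^*(T)\to 0$ in probability, and hence $(I_n^*(T))_n$ is bounded in probability. Similarly $\dem_T(\lambda_n J_n,0)\to 0$, and Lemma~\ref{le.cont_of_quad_variation} then yields $\lambda_n^2[J_n](T)\to 0$ in probability. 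Since $[J_n]\ge 0$, this gives boundedness in probability of $([J_n](T))_n$; here one uses that any $\lambda_n\to0$ is of the form $\mu_n^2$ with $\mu_n\to0$.

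\textbf{Conclusion.} By Lemma~\ref{le.pos_semimartingale},
\[
\ln\!\big(X_n^*(T)/\epsilon_n^2\big)\le 3K_1T+2I_n^*(T)+[J_n](T)=:M_n,
\]
where $(M_n)$ is bounded in probability. Hence $(Z_n^{\tau_n})^*(T)^2\le X_n^*(T)\le \epsilon_n^2e^{M_n}\to 0$ in probability. Combined with the localisation and the bound $Y_n^*\le Z_n^*+C_n^*$, this yields $Y_n\to 0$ ucp.
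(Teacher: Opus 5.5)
Your proposal is correct and follows essentially the same route as the paper. It applies Lemma~\ref{le.pos_semimartingale} to $\epsilon_n^2+\norm{Z_n}^2$ and expands via integration by parts, Lemma~\ref{le.prop_int_proc}(b) and Lemma~\ref{le.quadratic_variation_of_integral} with integrands bounded uniformly in $n$; it then gets tightness from Lemma~\ref{le.bounded_in_prob}, Corollary~\ref{co.bdd_stoch_convergence} and Lemma~\ref{le.cont_of_quad_variation}, and localises at $\tau_n=\inf\{t:\norm{C_n(t)}>\epsilon_n\}$. The differences are only organisational: you stop $Z_n$ at $\tau_n$ directly instead of first treating the case $\sup_t\norm{C_n(t)}\le\epsilon_n$ and then passing to the auxiliary $\widetilde{Y}_n$, and you apply Lemma~\ref{le.bounded_in_prob} to each term separately rather than to the logarithm as a whole.
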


\begin{proof} (The proof originates from \cite{Lowther}). 
We first prove the claim under the additional assumption that there exists a sequence $(\epsilon_n)_{n\in\N}$ of  strictly positive reals converging to $0$ and satisfying 
\begin{align}\label{eq.first-assumption-C_n}
	\sup_{t\geq 0}\norm{C_n(t)}\leq \epsilon_n \qquad\text{a.s.\ for all }n\in\N.
\end{align}
{To this end, we define a sequence $(Z_n)_{n \in \mathbb{N}}$ of $V$-valued processes, and another sequence $(D_n)_{n \in \mathbb{N}}$ of $H$-valued processes for all $t\geq0$ by
\begin{align*}
Z_n(t)&\vcentcolon= \int_{(0,t]} \drift_n(s)\,{\rm d}s+\int_{(0,t]} \diff_n(s)\,{\rm d}L(s);\\
D_n(t)&\vcentcolon=Y_n(t)-C_n(t)=S_n(t)Z_n(t).
\end{align*}
}Since $(Z_n)_{n \in \mathbb{N}}$ is a sequence of $V$-valued semimartingales, it follows from \cite[Co.\ 26.6]{met82} that  the expression $\norm{Z_n}^2+\epsilon_n^2$ defines a positive semimartingale for each $n \in \mathbb{N}$. {Using the elementary fact that the function $\ln$ is increasing,} and applying Lemma \ref{le.pos_semimartingale} to the strictly positive semimartingale $\norm{Z_n}^2+\epsilon_n^2$ demonstrates, for each $t>0$, that 
\begin{align}\label{eq.log_bound_on_ito_process}
    &{\ln\left(\epsilon_n^{-2}\left(D_n^*(t)\right)^2+1\right)}\nonumber\\
    &\quad\leq\ln\left(\epsilon_n^{-2}\left(Z_n^*(t)\right)^2+1\right)\leq \left(\int_{(0,\cdot]} \frac{1}{\left(Z_n^*(s-)\right)^2+\epsilon_n^2}\;{\rm d}{\norm{Z_n}^2}(s)\right)^*(t)\quad\text{ a.s.}
\end{align}
Define the predictable processes $\alpha_n$, $\beta_n$  and $\beta_n^\dagger$ by
\[
\alpha_n(s): =\frac{\drift_n(s)}{(Z_n^*(s-)+\epsilon_n)}, 
\qquad 
\beta_n(s) : = \frac{\diff_n(s)}{(Z_n^*(s-)+\epsilon_n)}, 
\quad 
\beta_n^\dagger(s) : = \frac{\diff_n^\dagger(s)}{(Z_n^*(s-)+\epsilon_n)}, 
\]
for all $s\ge 0$, where for each $\Phi \in L(U,V)$, we denoted by $\Phi^\dagger$ the adjoint of $\Phi$. Note, that the assumptions on $\drift_n$ and $\diff_n$, together with \eqref {eq.first-assumption-C_n},  guarantee that $\alpha_n$ is bounded by $K_1$, while $\beta_n$ and  $\beta_n^\dagger$ are bounded by $K_2$. For each \(t>0\), the semimartingale \(Z_n\) admits the representation

\[
Z_n(t)=\int_{(0,t]} (Z_n^*(s-)+\epsilon_n)\alpha_n(s)\,{\rm d}s+\int_{(0,t]} (Z_n^*(s-)+\epsilon_n)\beta_n(s)\,{\rm d}L(s).
\]
Applying the integration by parts formula \cite[Cor.\ 26.6]{met82}, Part (b) of
Lemma \ref{le.prop_int_proc} and Lemma \ref{le.quadratic_variation_of_integral}, we obtain
\begin{align}\label{eq.squared_integrator_control}
    &\int_{(0,t]}\frac{1}{\left(Z_n^*(s-)\right)^2+\epsilon_n^2}\;{\rm d}{\norm{Z_n}^2}(s)\nonumber\\
    &= 2\int_{(0,t]} \frac{\langle Z_n(s-), \cdot\rangle\,}{\left(Z_n^*(s-)\right)^2+\epsilon_n^2}\;{\rm d}Z_n(s)+\int_{(0,t]}\frac{1}{\left(Z_n^*(s-)\right)^2+\epsilon_n^2}\;\d[Z_n](s)\nonumber\\
    &= 2\left(\int_{(0,t]} \frac{\langle Z_n(s-), \alpha_n\rangle(Z_n^*(s-)+\epsilon_n)}{\left(Z_n^*(s-)\right)^2+\epsilon_n^2}\,{\rm d}s+ \int_{(0,t]} \frac{\langle \beta_n^\dagger Z_n(s-),\cdot \rangle(Z_n^*(s-)+\epsilon_n)}{\left(Z_n^*(s-)\right)^2+\epsilon_n^2}\,{\rm d}L(s)\right)\nonumber\\
    &\qquad\qquad+ \left[\int_{(0,\cdot]} \frac{(Z_n^*(s-)+\epsilon_n)}{\left((Z_n^*(s-))^2+\epsilon_n^2\right)^{1/2}}\beta_n\,{\rm d}L(s)\right](t).
\end{align}
Let $(\lambda_n)_{n \in \mathbb{N}}$ be a decreasing sequence of positive real numbers with $\lim_{n \rightarrow \infty}\lambda_n=0$. Using the elementary inequality $(a+b)^2\leq 2(a^2+b^2)$ for every $a,b\in\R$, all integrands on the right hand side of Equation \eqref{eq.squared_integrator_control} are bounded. Consequently, the dominated convergence theorem, or alternatively a direct estimate, yields
\begin{equation}\label{eq.dct_1}
    \lim_{n \rightarrow \infty}E\left[\sup_{t \in [0,T]}\left \vert\int_{(0,t]} \lambda_n \frac{\langle Z_n(s-), \alpha_n\rangle(Z_n^*(s-)+\epsilon_n)}{\left(Z_n^*(s-)\right)^2+\epsilon_n^2}\,{\rm d}s\right\vert \wedge 1\right]=0, 
\end{equation}
and Corollary \ref{co.bdd_stoch_convergence} implies
\begin{equation}\label{eq.dct_2}
    \lim_{n \rightarrow \infty}E\left[\sup_{t \in [0,T]}\abs{\int_{(0,t]} \lambda_n\frac{\langle \beta_n^\dagger Z_n(s-), \cdot\rangle(Z_n^*(s-)+\epsilon_n)}{\left(Z_n^*(s-)\right)^2+\epsilon_n^2}\,{\rm d}L(s)} \wedge 1\right]=0.
\end{equation}
Similarly, Corollary \ref{co.bdd_stoch_convergence} and Lemma \ref{le.cont_of_quad_variation} together guarantee
\begin{equation}\label{eq.dct_3}
    \lim_{n \rightarrow \infty}E\left[\sup_{t \in [0,T]}\abs{\left[\int_{(0,\cdot]} \lambda_n \frac{(Z_n^*(s-)+\epsilon_n)}{\left((Z_n^*(s-))^2+\epsilon_n^2\right)^{1/2}}\beta_n \,{\rm d}L(s)\right](t)} \wedge 1\right]=0.
\end{equation}
Combining Equations \eqref{eq.log_bound_on_ito_process}-\eqref{eq.dct_3}, we conclude that for every $t\in [0,T]$, 
\begin{align*}
    \lim_{n \rightarrow \infty}\lambda_n &\ln\left(\epsilon_n^{-2}\left(D_n^*(t)\right)^2+1\right)
   =0 \quad \text{in } L_P^0(\Omega,\mathbb{R}).
\end{align*}
Since the above convergence holds for an arbitrary sequence $(\lambda_n)_{n\in\N}$ decreasing to $0$, by Lemma \ref{le.bounded_in_prob}, the sequence $(\ln\left(\epsilon_n^{-2}(D_n^*(t))^2+1\right))_{n \in \mathbb{N}}$ of random variables is bounded in probability. Hence, we obtain by exponentiation  that $(\epsilon_n^{-2}(D_n^*(t))^2)_{n \in \mathbb{N}}$ is also bounded in probability. Since $\epsilon_n^{-2} \rightarrow \infty$, it follows  that $D_n^*(t) \rightarrow 0$ in probability. Since this argument holds for all $t\geq0$, we conclude that $D_n \rightarrow 0$ uniformly on compacts in probability. By the very definition of the sequence $(D_n)_{n \in \mathbb{N}}$ and Assumption \eqref{eq.first-assumption-C_n}, we deduce that $Y_n \rightarrow 0$ uniformly on compacts in probability.

To prove the result without assuming \eqref{eq.first-assumption-C_n}, fix $T>0$. Lemma \ref{le.ucp_consequence} implies the existence of a strictly positive sequence $(\epsilon_n)_{n\in\N}$ converging to $0$ and satisfying $P(C_n^\ast(T)>\epsilon_n)\to 0$ as $n\to \infty$. Define a sequence $(\tau_n)_{n \in \mathbb{N}}$ of stopping times by
\[\tau_n:=\inf\{t\geq 0:\norm{C_n(t)}>\epsilon_n\},\]
and a sequence $(\widetilde{Y}_n)_{n \in \mathbb{N}}$ of auxiliary processes by
\[\widetilde{Y}_n(t):={S_n(t)}\left(\int_{(0,t]} \drift_n(s) {\mathbb{1}_{(0,\tau_n]}}\,{\rm d}s+\int_{(0,t]} \diff_n(s){\mathbb{1}_{(0,\tau_n]}(s)}\,{\rm d}L(s)\right)+C_n(t)\mathbb{1}_{[0,\tau_n)}(t).\]
The first part of the proof, with $\tilde{\drift}_n = \drift_n\mathbb{1}_{[0,\tau_n]}$ and $\tilde{\diff}_n = \diff_n \mathbb{1}_{[0,\tau_n]}$, guarantees that $(\widetilde{Y}_n)_{n \in \mathbb{N}}$ converges to $0$ uniformly on compacts in probability. Since  
\[\lim_{n \rightarrow \infty}P(\tau_n\leq T)=\lim_{n \rightarrow \infty}P(C_n^*(T)>\epsilon_n)=0, \]
and $\widetilde{Y}_n(t)=Y_n(t)$ on  $\{\tau_n>t \}$,  we conclude  
\begin{align*}
	\lim_{n \rightarrow \infty}P\left(Y_n^*(T)>\epsilon\right)&\le \lim_{n \rightarrow \infty}\bigg(P\Big(\{Y_n^*(T)>\epsilon\} \cap \{\tau_n> T\}\Big)+P(\tau_n\leq T)\bigg)\\
	&\le \lim_{n \rightarrow \infty}P\left(\widetilde{Y}_n^*(T)>\epsilon\right)+ \lim_{n \rightarrow \infty}P(\tau_n \leq T)=0,
\end{align*}
which completes the proof.
\end{proof}

Given Equation \eqref{eq.equation}, we define a mapping $\Lambda:D_H\rightarrow D_H$ by
\begin{equation}\label{eq.fixed_point_operator}
    \Lambda(X)(t)=S(t)X_0+\int_{(0,t]} S(t-s)\drift(X(s))\,{\rm d}s+\int_{(0,t]} S(t-s)\diff(X(s-))\,{\rm d}L(s),
\end{equation}
where the fact that $\Lambda$ is well-defined follows from Lemma \ref{le.cont_of_lambda} below. To prove Theorem \ref{th.existence}, we will show that $\Lambda$ has a fixed point in $D_H$, that is, an $H$-valued adapted c\`adl\`ag process $X$ satisfying
\[
\Lambda(X)(t)= X(t)\qquad \text{$P$-a.s.\ for all }t\ge 0.
\]
Before proceeding with the fixed-point argument, we first establish that $\Lambda$ is well defined and examine some of its key properties in the following lemmas.
\begin{lemma}\label{le.cont_of_lambda}
  Equation  \eqref{eq.fixed_point_operator} defines a continuous  map $\Lambda:(D_H,\ducp) \rightarrow (D_H,\ducp)$. 
\end{lemma}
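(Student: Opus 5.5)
The proof has two parts. First, for each $X\in D_H$ the process $\Lambda(X)$ is adapted and c\`adl\`ag. Second, $\Lambda$ is sequentially continuous for $\ducp$; this suffices because $(D_H,\ducp)$ is metrisable.

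\emph{Regularity of $\Lambda(X)$.} The stochastic convolution cannot be handled directly, since $t\mapsto \int_{(0,t]}S(t-s)\Psi(s)\,\d L(s)$ is not a stochastic integral process. I would remove the time-dependence of the integrand with the Sz.-Nagy dilation theorem, using (A1).
\begin{itemize}
\item There are a separable Hilbert space $K\supseteq H$ with isometric embedding $i\colon H\to K$, a strongly continuous unitary group $(W(t))_{t\in\R}$ on $K$, and the orthogonal projection $\pi\colon K\to H$, such that $S(t)=\pi W(t) i$ for $t\ge 0$.
\item Since $X$ is c\`adl\`ag and adapted and $\diff$ is Lipschitz, the process $s\mapsto \diff(X(s-))$ is adapted, c\`agl\`ad and $L_2(U,H)$-valued. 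Hence $\Psi(s):=W(-s)i\,\diff(X(s-))$ is predictable, and by Lemma~\ref{le.modular_domination} (with $\norm{W(-s)i}\le 1$) we get $m_L(\Psi)\lesssim m_L(\diff(X(\cdot-)))<\infty$ a.s. Remark~\ref{re.caglad_is_integrable} and Theorem~\ref{pred_iff_integrable} then show that $\Psi$ is $L$-integrable.
\item The $K$-valued process $Z(t):=\int_{(0,t]}W(-s)i\drift(X(s))\,\d s+\int_{(0,t]}\Psi(s)\,\d L(s)$ is therefore an adapted c\`adl\`ag semimartingale, and Lemma~\ref{le.prop_int_proc}(a) pulls $\pi W(t)$ inside the integral.
\item This gives the representation $\Lambda(X)(t)=S(t)X_0+\pi W(t)Z(t)$. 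Because $W$ is strongly continuous and uniformly bounded, $t\mapsto W(t)Z(t)$ is c\`adl\`ag. Adaptedness is clear.
\end{itemize}
This representation has exactly the form required in Proposition~\ref{pro.ucp_convergence}, with $V=K$ and $S_n(t)=\pi W(t)$, which will also be useful later.

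\emph{Continuity.} Let $X_n\to X$ in $\ducp$. By the subsequence principle it suffices to show that every subsequence has a further subsequence along which $\Lambda(X_n)\to\Lambda(X)$ ucp. Fix $T$.
\begin{itemize}
\item Lemma~\ref{lem:sum_of_processes} gives a subsequence (not relabelled) for which $\xi:=\norm{X}+\sum_n\norm{X_n-X}$ is a real c\`adl\`ag adapted process.
\item Set $\tau_\ell:=\inf\{t:\xi(t)>\ell\}$. Then $P(\tau_\ell\le T)\to 0$ as $\ell\to\infty$.
\item On $(0,\tau_\ell]$ the integrands $\diff(X_n(s-))-\diff(X(s-))$ are bounded by $2c_\diff\ell$, uniformly in $n$, and they converge to $0$ $P_T$-a.e. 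The a.e.\ convergence holds because $X_n\to X$ uniformly on $[0,T]$ a.s.\ along the subsequence (summability of $\xi$).
\item Corollary~\ref{co.bdd_stoch_convergence} applied to $\1_{(0,\tau_\ell]}W(-s)i\big(\diff(X_n(s-))-\diff(X(s-))\big)$ yields $\dem_T$-convergence, hence $\ducp_T$-convergence, to $0$ of the corresponding $K$-valued integrals.
\item By Lemma~\ref{le.prop_int_proc}(c), these stopped integrals coincide with the unstopped ones on $\{\tau_\ell>T\}$.
\item The drift term converges pathwise uniformly on $[0,T]$, since $\sup_{s\le T}\norm{\drift(X_n(s))-\drift(X(s))}\le c_\drift (X_n-X)^*(T)\to 0$.
\item Applying the uniformly bounded operators $\pi W(t)$ preserves uniform convergence on $[0,T]$.
\item Finally, I remove the localisation with the $\epsilon/2$ argument used at the end of Lemma~\ref{le.cont_of_quad_variation}: first choose $\ell$ large, then $n$ large.
\end{itemize}

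\emph{Main obstacle.} I expect the hardest step to be the c\`adl\`ag property of the stochastic convolution, which forces the dilation argument. The point to check is that composition with the time-dependent unitary $W(-s)$ preserves predictability and $L$-integrability. Predictability follows from strong continuity of $W$, via approximation by step processes. For integrability, $\sup_s\norm{W(-s)i}\le 1$ lets Lemma~\ref{le.modular_domination} apply pointwise in $s$.
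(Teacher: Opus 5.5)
Your proof is correct. The regularity part and the drift term are handled exactly as in the paper: dilation, Lemma~\ref{le.prop_int_proc}(a) to pull $\pi\hat S(t)$ out of the integral, and c\`adl\`ag paths of the dilated integral process.

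The routes differ only for the stochastic convolution term.

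\emph{The paper's route.} It sets $Z_n(t)=\hat S(-t)i\bigl(\diff(Y_n(t-))-\diff(Y(t-))\bigr)$ and passes to an a.s.\ convergent subsequence. It then uses \cite[Le.~3.16]{BR} to bound $\zeta_L+\eta_L$ of $Z_{n_k}(\omega,t)$ by an $\omega$-dependent constant, and obtains $m_L(Z_{n_k}(\omega))\to 0$ by pathwise dominated convergence in $t$. Finally, \cite[Cor.~7.3]{BR} converts convergence of the modular in probability into $\dem_T$-convergence. Because that criterion tolerates random, non-uniform bounds, no stopping times are needed.

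\emph{Your route.} You make the bound uniform in $\omega$ by localising with $\tau_\ell$, built from the summable subsequence of Lemma~\ref{lem:sum_of_processes}. This lets you apply the in-paper Corollary~\ref{co.bdd_stoch_convergence}. You then remove the localisation using Lemma~\ref{le.prop_int_proc}(c) and the $\epsilon/2$ argument from Lemma~\ref{le.cont_of_quad_variation}.

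\emph{Comparison.} Your argument is somewhat longer but relies only on results stated in this paper. The paper's argument is shorter but depends on the modular machinery of \cite{BR}.

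Two small remarks. First, Lemma~\ref{le.prop_int_proc}(c) is stated only at the terminal time. To get the identity on $\{\tau_\ell>T\}$ simultaneously for all $t\le T$, apply it at each $t\le T$ and use right-continuity. Second, you were right to omit any initial-value term: $S(t)X_0$ cancels in $\Lambda(X_n)-\Lambda(X)$, so the paper's $C_{1,n}$ is superfluous.
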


\begin{proof}
Since for $X\in D_H$ and fixed $t>0$, the process $(S(t-s)\drift(X(s)):\, s\in [0,t])$ is strongly measurable and Bochner integrable on $(0,t]$, 
while the process $(S(t-s)\diff(X(s-)):\, s\in [0,t])$ is predictable and has c\`agl\`ad paths, the integrals in the definition of $\Lambda(X)(t)$ exist; see also Remark~\ref{re.caglad_is_integrable}. 

To prove that $\Lambda(X)$ has c\`adl\`ag trajectories for each $X\in D_H$, we apply the dilation theorem \cite[Th. I.8.1]{NAFO} to conclude that there exists a Hilbert space $\Hat{H}$ such that $H \subseteq \Hat{H}$ with an isometric embedding $i:H\rightarrow \hat{H}$ and a group of unitary operators $(\Hat{S}_t)_{t \in \mathbb{R}}$ such that $S(t)=\pi\Hat{S}(t)i$ for all $t \geq 0$, where $\pi:\Hat{H}\rightarrow H$ denotes the orthogonal projection from $\hat{H}$ to $H$. Applying Part~(a) of Lemma~\ref{le.prop_int_proc} gives that, for every \(t>0\), 
\begin{align}\label{eq.dilation_of_lambda_operator}
&\Lambda(X)(t)\nonumber \\
&\; =\pi\hat{S}(t)iX_0+\int_{(0,t]} \pi\hat{S}(t-s)i\drift(X(s))\,{\rm d}s+\int_{(0,t]} \pi\hat{S}(t-s)i\diff(X(s-))\,{\rm d}L(s)\nonumber\\
&\;=\pi\hat{S}(t)iX_0+\pi\hat{S}(t)\int_{(0,t]} \hat{S}(-s)i\drift(X(s))\,{\rm d}s+\pi\hat{S}(t)\int_{(0,t]} \hat{S}(-s)i\diff(X(s-))\,{\rm d}L(s)\nonumber\\
&\;=\pi\hat{S}(t)\left(iX_0+\int_{(0,t]} \hat{S}(-s)i\drift(X(s))\,{\rm d}s+\int_{(0,t]} \hat{S}(-s)i\diff(X(s-))\,{\rm d}L(s)\right).
\end{align}
Since $\drift$ is continuous, the $\hat{H}$-valued Lebesgue integral process
\[\left(\int_{(0,t]} \hat{S}(-s)i\drift(X(s))\,{\rm d}s:t \geq 0\right)\]
is well-defined and has continuous paths. Since \(\diff\) is continuous, the process \(\bigl(\hat{S}(-s)i\,\diff(X(s-)) : s \ge 0\bigr)\) is
predictable with c\`agl\`ad paths, and is therefore integrable by Remark~\ref{re.caglad_is_integrable}. 
By Theorem~7.4 in \cite{BR}, the stochastic integral
\[
\left(
\int_{(0,t]} \hat{S}(-s)i\,\diff(X(s-))\,\mathrm{d}L(s)
: t \ge 0
\right)
\]
is an \(\hat{H}\)-valued c\`adl\`ag semimartingale.  
Hence, by \eqref{eq.dilation_of_lambda_operator}, we conclude that 
\(\Lambda(X) \in D_H\).

To establish continuity of $\Lambda:(D_H,\ducp) \rightarrow (D_H,\ducp)$, let  $(Y_n)_{n\in \mathbb{N}}$ be a sequence in $D_H$  converging to a process $Y$ in  $D_H$ and fix $T>0$. Applying the representation \eqref{eq.dilation_of_lambda_operator}, we obtain that
\begin{align}\label{eq.cont_lambda_split}
    &E\left[\sup_{t \in [0,T]} \norm{\Lambda(Y_n)(t)-\Lambda(Y)(t)}_H\wedge 1\right]\nonumber\\
    &\qquad\leq E\left[\sup_{t \in [0,T]} \norm{S(t)(Y_n(0)-Y(0))}_H\wedge 1\right]\nonumber\\
    &\qquad\quad+E\left[\sup_{t \in [0,T]} \norm{\pi\hat{S}(t)\int_{(0,t]} \hat{S}(-s)i\Big(\drift(Y_n(s))-\drift(Y(s))\Big)\,{\rm d}s}_H\wedge 1\right]\nonumber\\
    &\qquad\quad+E\left[\sup_{t \in [0,T]} \norm{\pi\hat{S}(t)\int_{(0,t]} \hat{S}(-s)i\Big(\diff(Y_n(s-))-\diff(Y(s-))\Big)\,{\rm d}L(s)}_H\wedge 1\right]\nonumber\\
    &\qquad:=C_{1,n}+C_{2,n}+C_{3,n}.
\end{align}
Since $S$ is a semigroup of contractions, it follows from Lebesgue's dominated convergence theorem that
\[\lim_{n \rightarrow \infty} C_{1,n}\leq \lim_{n \rightarrow \infty}E\left[\norm{Y_n(0)-Y(0)}_H\wedge 1\right]=0.\]
It follows from the dilation theorem and Lipschitz continuity of $\drift$ that
\begin{align*}
    C_{2,n} 
    &\leq E\left[ \int_{(0,T]} \norm{\hat{S}(-s)i\Big(\drift(Y_n(s))-\drift(Y(s))\Big)}_{\hat{H}}\,{\rm d}s\wedge 1\right]\\
    &\leq \max{\{c_{\drift} T, 1\}} E\left[\sup_{t \in [0,T]} \norm{Y_n(t)-Y(t)}_{H}\wedge 1\right],
\end{align*}
which establishes $\lim_{n \rightarrow \infty} C_{2,n}=0$.

For estimating the term $C_{3,n}$ in \eqref{eq.cont_lambda_split}, define the $L_2(U,\Hat{H})$-valued random variables 
\[
Z_n(t):= \hat{S}(-t)i\Big(\diff(Y_n(t-))-\diff(Y(t-))\Big) \qquad\text{for }t\ge 0. 
\] 
Lipschitz continuity of $\diff$ and the dilation theorem imply, for each $n\in\N$ and $T>0$, that
\begin{align}\label{eq.cont_ucp_integrand}
    E\left[\sup_{t \in [0,T]}\norm{Z_n(t)}\wedge 1\right]
    \leq \max\{c_{\diff},1\} E\left[\sup_{t \in [0,T]} \norm{Y_n(t-)-Y(t-)}_H\wedge 1\right], 
\end{align}
from which it follows that $(Z_n^*(T))_{n\in\N}$ converges to $0$ in probability. It follows that for every subsequence  $(C_{3,n_m})_{m \in \mathbb{N}}$  of $(C_{3,n})_{n \in \mathbb{N}}$ there exists a further subsequence $(C_{3,n_{m_k}})_{k \in \mathbb{N}}$ and a set $\Omega_1\subseteq \Omega$ with $P(\Omega_1)=1$, such that for each $\omega \in \Omega_1$, we have $Z_{n_{m_k}}^*(\omega,T) \rightarrow 0$. In particular, for all $\omega\in \Omega_1$ there exists a constant $c(\omega)<\infty$, such that $\sup_{k \in \mathbb{N}} Z_{n_{m_k}}^*(\omega,T) \leq c(\omega)$. Hence, for each fixed $\omega \in \Omega_1$, it follows from \cite[Le. 3.16]{BR} that there exists a constant $d(\omega)<\infty$, such that
\begin{align*}
     \sup_{k \in \mathbb{N}}\sup_{t \in [0,T]}\left( \zeta_L(Z_{n_{m_k}}(\omega,t))+\eta_L(Z_{n_{m_k}}(\omega,t))\right)<d(\omega).
\end{align*}
Thus, for each fixed $\omega \in \Omega_1$, we can apply Lebesgue's dominated convergence theorem with the constant in time dominating function $d(\omega)+1$, to conclude
\[\lim_{k\rightarrow \infty}m_L(Z_{n_{m_k}}(\omega))=0\qquad \text{for all }\omega \in \Omega_1.\]
As $P(\Omega_1)=1$, it follows that
\[\lim_{k\rightarrow \infty}E\left[m_L(Z_{n_{m_k}})\wedge 1\right]=0,\]
which, by Corollary~7.3 in~\cite{BR}, implies
 $\lim_{k\to\infty} d_T^{em}(I(Z_{n_{m_k}}),0)=0$. As convergence in $d_T^{em}$ implies convergence in $d_T^{ucp}$, we have
\begin{align*}
    \lim_{k \rightarrow \infty}C_{3,{n_{m_k}}}
        \leq \lim_{k \rightarrow \infty}E\left[\sup_{t \in [0,T]} \norm{\int_{(0,t]} Z_{n_{m_k}}(s) \,{\rm d}L(s)}_{\hat{H}}\wedge 1\right]=0.
\end{align*}
Since the subsequence $(C_{3,n_m})_{m \in \mathbb{N}}$ is arbitrary it follows that $C_{3,n}\to 0$ 
as $n\to\infty$. 

It follows from \eqref{eq.cont_lambda_split} that 
\[
\lim_{n\to\infty} E\left[\sup_{t \in [0,T]} \norm{\Lambda(Y_n)(t)-\Lambda(Y)(t)}_H\wedge 1\right]=0, 
\]
which establishes continuity of the map $\Lambda:(D_H,\ducp) \rightarrow (D_H,\ducp)$.
\end{proof}

\begin{remark}\label{re.lambda_representation_dilated}
	The dilation formula used in \eqref{eq.dilation_of_lambda_operator} in 
	Lemma~\ref{le.cont_of_lambda} yields an alternative representation of 
	\(\Lambda\). 
	Specifically, for every \(X \in D_H\) and \(t \ge 0\),
	\[
	\Lambda(X)(t)
	= \pi\hat{S}(t)\Biggl(
	iX_0
	+ \int_{(0,t]} \hat{S}(-s)i\,\drift(X(s))\,\mathrm{d}s
	+ \int_{(0,t]} \hat{S}(-s)i\,\diff(X(s-))\,\mathrm{d}L(s)
	\Biggr).
	\]
	This representation will be crucial later when evaluating \(\Lambda(X)\) at stopping 
	times, as it ensures the existence of a particularly convenient expression for the 
	resulting random variables.
	
	 The dilation theorem from \cite{NAFO} has a broad range of applications. One of the first uses in the context of SPDEs appears in \cite{Hausenblas-Seidler}.
\end{remark}

\begin{lemma}\label{le.Lambda-invariant}
Let $X$ and $Y$ be processes in $D_H$ and let $\tau$ be a stopping time.
If $X(t)=Y(t)$ for all $t<\tau$, then for every $T\geq0$ we obtain $\Lambda(X)(\tau \wedge T)=\Lambda(Y)(\tau \wedge T)$.
\end{lemma}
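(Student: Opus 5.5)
The plan is to use the dilated representation of Remark~\ref{re.lambda_representation_dilated}. This reduces the claim to showing that the two "inner" processes
\[
M_X(t):=iX_0+\int_{(0,t]}\hat S(-s)i\,\drift(X(s))\,\d s+\int_{(0,t]}\hat S(-s)i\,\diff(X(s-))\,\d L(s)
\]
and $M_Y$ agree at the random time $\sigma:=\tau\wedge T$. Once this is shown, $\Lambda(X)(\sigma)=\pi\hat S(\sigma)M_X(\sigma)=\pi\hat S(\sigma)M_Y(\sigma)=\Lambda(Y)(\sigma)$ follows immediately. Evaluating the representation at a random time is legitimate because the identity holds for all $t$ simultaneously, almost surely: both sides are c\`adl\`ag in $t$, since $\hat S$ is strongly continuous and $M_X$ is c\`adl\`ag. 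This simultaneity is exactly why the dilation is needed. Without it, $\int_{(0,t]}S(t-s)\cdots$ is not an integral process in $t$, and one could not plug in $\tau$.

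The Lebesgue part is pathwise. For $s<\tau$ we have $X(s)=Y(s)$, so
\[
\int_{(0,\sigma]}\hat S(-s)i\,\drift(X(s))\,\d s=\int_{(0,\sigma]}\hat S(-s)i\,\drift(Y(s))\,\d s .
\]
The two integrands differ at most at the single point $s=\sigma$, which has Lebesgue measure zero.

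For the stochastic integral I apply Lemma~\ref{le.prop_int_proc}(c), with $\hat H$ in place of $H$. The integrand $\Psi_X(s):=\hat S(-s)i\,\diff(X(s-))$ is $L$-integrable by Remark~\ref{re.caglad_is_integrable}, and part (c) gives
\[
\int_{(0,\sigma]}\Psi_X\,\d L=\int_{(0,T]}\1_{(0,\tau]}(s)\Psi_X(s)\,\d L(s).
\]
Now $\1_{(0,\tau]}(s)X(s-)=\1_{(0,\tau]}(s)Y(s-)$ for every $s$. Indeed, for $s\le\tau$ we have $X(r)=Y(r)$ for all $r<s$, so the left limits coincide. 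Hence $\1_{(0,\tau]}\Psi_X=\1_{(0,\tau]}\Psi_Y$ identically on $\Omega\times[0,T]$, and the two stochastic integrals are equal almost surely because the integral is well defined on equivalence classes of integrands. The initial terms $iX_0$ coincide trivially.

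The main obstacle is this stochastic-integral step, specifically making sure that the identity $\1_{(0,\tau]}\Psi_X=\1_{(0,\tau]}\Psi_Y$ really gives equality of the integrals in the cylindrical setting, where no pathwise integral is available. I would argue it by linearity: the difference $\1_{(0,\tau]}(\Psi_X-\Psi_Y)$ is the zero process, and its integral is $0$, which follows from Definition~\ref{def.pred_integrability} by approximating with the zero step process. The remaining care is the null set: the identity from Lemma~\ref{le.prop_int_proc}(c) and the dilation identity each hold only almost surely, so the conclusion $\Lambda(X)(\tau\wedge T)=\Lambda(Y)(\tau\wedge T)$ holds $P$-a.s.
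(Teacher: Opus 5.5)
Your proposal is correct and follows the paper's proof essentially step by step. You reduce via the dilated representation of Remark~\ref{re.lambda_representation_dilated}, use Lemma~\ref{le.prop_int_proc}(c) to rewrite the stopped stochastic integral as $\int_{(0,T]}\1_{(0,\tau]}(s)\hat S(-s)i\,\diff(X(s-))\,\d L(s)$, observe that $X(s-)=Y(s-)$ for $s\in(0,\tau]$, and conclude. Your explicit handling of the Lebesgue term, of the evaluation at the random time, and of the null sets just spells out what the paper leaves implicit.
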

\begin{proof} 
By Remark \ref{re.lambda_representation_dilated}, 
it suffices to show that for every $T\geq 0$,
\[\pi\hat{S}(\tau \wedge T)\int_{(0,\tau \wedge T]} \hat{S}(-s)i\diff(X(s-))\,{\rm d}L(s)=\pi\hat{S}(\tau \wedge T)\int_{(0,\tau \wedge T]} \hat{S}(-s)i\diff(Y(s-))\,{\rm d}L(s).\]
By part (c) of Lemma \ref{le.prop_int_proc}, we have
\[\int_{(0,\tau \wedge T]} \hat{S}(-s)i\diff(X(s-))\,{\rm d}L(s)=\int_{(0,T]} \mathbb{1}_{(0,\tau]}(s) \hat{S}(-s)i\diff(X(s-))\,{\rm d}L(s).\]
Since  $X(t)=Y(t)$ for all $t<\tau$, and the left limits are determined by the behaviour of the process  strictly before the time point, it follows that $X(t-)=Y(t-)$ for all $t\in (0,\tau]$. Consequently,
\[\int_{(0,T]} \mathbb{1}_{(0,\tau]}(s) \hat{S}(-s)i\diff(X(s-))\,{\rm d}L(s)=\int_{(0,T]} \mathbb{1}_{(0,\tau]}(s) \hat{S}(-s)i\diff(Y(s-))\,{\rm d}L(s).\]
Applying part~(c) of Lemma~\ref{le.prop_int_proc} once more yields
\[\int_{(0,T]} \mathbb{1}_{(0,\tau]}(s) \hat{S}(-s)i\diff(Y(s-))\,{\rm d}L(s)=\int_{(0,\tau\wedge T]} \hat{S}(-s)i\diff(Y(s-))\,{\rm d}L(s),\]
which completes the proof.
\end{proof}

The following is immediate from the c\`adl\`ag paths of the underlying processes. For later reference and completeness, we state it here and provide a proof.
\begin{lemma}\label{lemma:stopping_times_increase_1}
Assume that  $Y$ is an $H$-valued c\`adl\`ag adapted process. 
If  $\sigma_0\colon \Omega \rightarrow [0,\infty]$ is a stopping time with $Y(\sigma_0)=0$ on $\{\sigma_0<\infty\}$ and 
 $\sigma_1 := \inf\{t\geq \sigma_0 \colon  \norm{Y(t)} > \epsilon\}$ for some $\epsilon>0$, then 
$P(\{\sigma_1 = \sigma_0\} \cap \{ \sigma_0<\infty\})=0$.
\end{lemma}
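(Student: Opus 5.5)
The plan is a pathwise argument using right-continuity of $Y$ at $\sigma_0$. Fix $\omega$ outside the null set on which $Y(\cdot,\omega)$ fails to be c\`adl\`ag, and assume $\sigma_0(\omega)<\infty$. I will show that then $\sigma_1(\omega)>\sigma_0(\omega)$. The event in the statement is therefore contained in a null set, and completeness of the probability space makes it measurable with probability zero. This avoids any question of whether $\{\sigma_1=\sigma_0\}$ is measurable a priori. It is measurable anyway, since $\sigma_1$ is a stopping time by the d\'ebut theorem under the usual conditions, but that fact is not needed.

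The key step runs as follows. Since $Y(\sigma_0(\omega),\omega)=0$ and $t\mapsto Y(t,\omega)$ is right-continuous at $\sigma_0(\omega)$, there is a $\delta=\delta(\omega)>0$ such that
\[
\norm{Y(t,\omega)}\le \epsilon/2<\epsilon \qquad\text{for all } t\in[\sigma_0(\omega),\sigma_0(\omega)+\delta).
\]
Hence the set $\{t\ge\sigma_0(\omega): \norm{Y(t,\omega)}>\epsilon\}$ is contained in $[\sigma_0(\omega)+\delta,\infty)$. Taking the infimum gives $\sigma_1(\omega)\ge\sigma_0(\omega)+\delta>\sigma_0(\omega)$, with the usual convention $\inf\emptyset=\infty$ also covering the case where the set is empty.

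I expect no real obstacle here. The only points needing care are using the \emph{strict} inequality $>\epsilon$ in the definition of $\sigma_1$ together with the closed condition at $t=\sigma_0$ itself, which is where $Y(\sigma_0)=0$ enters, and the null-set bookkeeping for the paths. For the latter, ``c\`adl\`ag adapted'' in $D_H$ means almost all paths are c\`adl\`ag, or all after modification.
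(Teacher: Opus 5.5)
Your proposal is correct and is essentially the paper's argument: both are pathwise and rest only on right-continuity of $Y$ at $\sigma_0$ together with $Y(\sigma_0)=0$. The paper argues by contradiction, taking a sequence $t_k\downarrow\sigma_0$ with $\norm{Y(t_k)}$ bounded away from $0$; you argue directly with a $\delta$-neighbourhood, which gives the slightly more explicit pathwise bound $\sigma_1\ge\sigma_0+\delta$.
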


\begin{proof}
For each fixed $\omega\in \{\sigma_1 = \sigma_0\}\cap\{\sigma_0 < \infty\}$ there exists a sequence $(t_k)_{k\in \N}$ in $(\sigma_0(\omega),\infty)$ decreasing to $ \sigma_0(\omega)$ and satisfying $\norm{Y(\omega, t_k)}>\frac{\epsilon}{2}$ for all $k\in\N$, due to right-continuity of $Y$. On the other hand, since $Y(\omega, \sigma_0(\omega))=0$, right-continuity also implies  $\lim_{k\to\infty }Y(\omega,t_k) =0$, which yields a contradiction.
\end{proof}

The main ingredient of our proof of Theorem \ref{th.existence} is the following result, which we will interpret later as a pathwise approximation of the solution of \eqref{eq.equation}. 
\begin{proposition}\label{pro.approx_solution}
    Let $\Lambda$ be the function defined in  \eqref{eq.fixed_point_operator}. Then for every $\epsilon>0$, there exists a process $X \in D_H$ satisfying, for all $\omega \in \Omega$, that
\[\sup_{t\geq0}\norm{X(\omega,t)-\Lambda(X)(\omega,t)}\le \epsilon.\]
\end{proposition}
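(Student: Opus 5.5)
We construct $X$ by the adaptive Euler--Peano scheme described in the introduction: freeze the approximation and reset it to the value of $\Lambda$ whenever the frozen process drifts more than $\epsilon$ away from its image. Set $\sigma_0:=0$ and $X_0(t):=X_0$ for all $t\ge 0$, i.e.\ the constant process equal to the initial condition. Inductively, suppose a process $X_n\in D_H$ and a stopping time $\sigma_n$ have been constructed. We define
\[
\sigma_{n+1}:=\inf\{t\ge \sigma_n:\ \norm{X_n(t)-\Lambda(X_n)(t)}>\epsilon\},
\]
which is a stopping time by the usual conditions and the c\`adl\`ag property of $X_n-\Lambda(X_n)$ (Lemma~\ref{le.cont_of_lambda}). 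We then put
\[
X_{n+1}(t):=X_n(t)\1_{[0,\sigma_{n+1})}(t)+\Lambda(X_n)(\sigma_{n+1})\1_{[\sigma_{n+1},\infty)}(t).
\]
The random variable $\Lambda(X_n)(\sigma_{n+1})$ is $\F_{\sigma_{n+1}}$-measurable and well defined, by the dilated representation in Remark~\ref{re.lambda_representation_dilated} evaluated at the stopping time. Hence $X_{n+1}\in D_H$, and $X_{n+1}$ is piecewise constant after $\sigma_1$.

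The key invariant to verify is that $X_{n+1}=X_n$ on $[0,\sigma_{n+1})$. By Lemma~\ref{le.Lambda-invariant}, this gives $\Lambda(X_{n+1})=\Lambda(X_n)$ at all times up to and including $\sigma_{n+1}$. The lemma is stated at $\tau\wedge T$, so it is applied with $\tau=t\wedge\sigma_{n+1}$, noting that agreement before $\sigma_{n+1}$ implies agreement before $t\wedge\sigma_{n+1}$. Consequently, on $[\sigma_n,\sigma_{n+1})$ we have $\norm{X_{n+1}-\Lambda(X_{n+1})}\le\epsilon$ by the definition of $\sigma_{n+1}$. At $\sigma_{n+1}$ itself we have $X_{n+1}(\sigma_{n+1})=\Lambda(X_{n+1})(\sigma_{n+1})$ whenever $\sigma_{n+1}<\infty$. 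Thus $Y:=X_{n+1}-\Lambda(X_{n+1})$ vanishes at $\sigma_{n+1}$, and Lemma~\ref{lemma:stopping_times_increase_1} gives $\sigma_{n+2}>\sigma_{n+1}$ a.s.\ on $\{\sigma_{n+1}<\infty\}$. Outside a null set, which can be removed by modifying the process there using completeness of the filtration, we may therefore define $X(t):=X_n(t)$ for $t<\sigma_n$. The processes are consistent, and at each time $t<\sigma_n$ we have $\Lambda(X)(t)=\Lambda(X_n)(t)$ by Lemma~\ref{le.Lambda-invariant}, so that $\norm{X(t)-\Lambda(X)(t)}\le\epsilon$ there.

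The main obstacle is to show that $\sigma_n\to\infty$ almost surely, i.e.\ that the update times do not accumulate at a finite time. Only then is $X$ defined on all of $[0,\infty)$ and c\`adl\`ag. Suppose $\sigma_\infty:=\lim_n\sigma_n<\infty$ on a set of positive probability. I would first introduce the process $\tilde X:=X$ on $[0,\sigma_\infty)$, frozen afterwards at some adapted value; since the left limit of $X$ at $\sigma_\infty$ need not exist, one may for instance freeze $\tilde X$ at $0$. By Lemma~\ref{le.Lambda-invariant}, $\Lambda(\tilde X)$ is a c\`adl\`ag process agreeing with $\Lambda(X_n)$ on $[0,\sigma_n]$. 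In particular $\Lambda(\tilde X)$ has a left limit at $\sigma_\infty$, so it is Cauchy along the sequence $\sigma_n\uparrow\sigma_\infty$, and $X(\sigma_n)=\Lambda(\tilde X)(\sigma_n)$ converges as well. On the other hand, at time $\sigma_{n+1}$ the frozen value $X(\sigma_n)$ satisfies $\norm{X(\sigma_n)-\Lambda(\tilde X)(\sigma_{n+1})}\ge\epsilon$: either $\sigma_{n+1}$ is a jump exit, or the distance reaches $\epsilon$ along times decreasing to $\sigma_{n+1}$, and right continuity yields $\ge\epsilon$ at $\sigma_{n+1}$. This gives
\[
\norm{\Lambda(\tilde X)(\sigma_n)-\Lambda(\tilde X)(\sigma_{n+1})}\ge\epsilon \qquad\text{for all }n,
\]
which contradicts the existence of the left limit of the c\`adl\`ag path $\Lambda(\tilde X)$ at $\sigma_\infty$. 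Hence $\sigma_n\to\infty$ a.s.

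The delicate points are the construction of $\tilde X$ so that it lies in $D_H$, and the exact handling of the strict and non-strict inequalities at the exit times. A final modification on a null set makes the bound $\sup_{t\ge0}\norm{X(t)-\Lambda(X)(t)}\le\epsilon$ hold for every $\omega$.
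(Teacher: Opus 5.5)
Your scheme, the strict increase $\sigma_{n+1}>\sigma_n$ via Lemma~\ref{lemma:stopping_times_increase_1}, and the closing contradiction from jumps of size at least $\epsilon$ all match the paper. However, your proof that $\sigma_n\to\infty$ has a genuine gap. The auxiliary process $\tilde X$ cannot lie in $D_H$. On $\{\sigma_\infty<\infty\}$ you have $X(\sigma_{n+1}-)=X(\sigma_n)$ and $\norm{X(\sigma_{n+1})-X(\sigma_n)}\ge\epsilon$ for all $n$. So $X$, and hence $\tilde X$, has no left limit at $\sigma_\infty$, whatever value you freeze it at afterwards. (If $\tilde X$ were c\`adl\`ag you would not need $\Lambda(\tilde X)$ at all, since the jumps of $X$ would already give the contradiction; the step assumes what it is meant to prove.)

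Consequently Lemma~\ref{le.cont_of_lambda} does not give you a c\`adl\`ag process $\Lambda(\tilde X)$. Defining it directly requires $s\mapsto\1_{(0,\sigma_\infty)}(s)\drift(X(s))$ to be Bochner integrable and $s\mapsto\1_{(0,\sigma_\infty)}(s)\diff(X(s-))$ to be $L$-integrable. That essentially means $X$ stays bounded on $[0,\sigma_\infty\wedge T)$, and nothing in your argument yields this. In fact you never use (A2)--(A3), and they are indispensable. Take $H=\R$, $A=0$, $\diff=0$, $\drift(x)=x^2$, $X_0=1$; here $\Lambda$ still maps $D_H$ into $D_H$ and Lemmas~\ref{le.Lambda-invariant} and~\ref{lemma:stopping_times_increase_1} remain valid. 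Your scheme produces $X=1+n\epsilon$ on $[\sigma_n,\sigma_{n+1})$ with $\sigma_{n+1}-\sigma_n=\epsilon/(1+n\epsilon)^2$, so $\sigma_\infty<\infty$.

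The missing ingredient is the paper's non-explosion step, $P(X^*(T\wedge\sigma_\infty)<\infty)=1$. For an arbitrary null sequence $(\lambda_n)$, the paper writes $Y_n:=\lambda_n X^{\sigma_n\wedge T}$, via the dilation, in the form required by the stochastic Gr\"onwall result, Proposition~\ref{pro.ucp_convergence}. The Lipschitz conditions bound the drift and diffusion integrands by $c_\drift Y_n^*(t-)$ and $c_\diff Y_n^*(t-)$, and the remainder $C_n$ tends to $0$ ucp because of the $\epsilon$-bound. Hence $Y_n\to0$ ucp, and Lemma~\ref{le.bounded_in_prob} shows that $(X^*(\sigma_n\wedge T))_{n}$ is bounded in probability.

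Only with this bound are $\1_{(0,\sigma_\infty\wedge T]}(s)\hat S(-s)i\drift(X(s))$ and $\1_{(0,\sigma_\infty\wedge T]}(s)\hat S(-s)i\diff(X(s-))$ integrable. Then Theorem~\ref{th_dom_stoch_conv} shows that $X(\sigma_n)$ converges on $\{\sigma_\infty\le T\}$, which gives the contradiction you describe. In your pathwise spirit, you could instead use the c\`adl\`ag property of the integral process built from these integrands; that process is what should replace $\Lambda(\tilde X)$.
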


\begin{proof} (The proof is adapted from \cite{Lowther}). 
Let $\epsilon >0$ be fixed. Define recursively a sequence of stopping times $(\tau_n)_{n \geq 0}$ and processes $(X_n)_{n \geq 0}$ by setting $\tau_0=0$ and $X_0(t)=X_0$ for all $t\ge 0$, and, for all $n\geq 0$, by 
    \[\tau_{n+1}=\inf\{t\geq \tau_n:\;\norm{X_{n}(t)-\Lambda(X_{n})(t)}\geq \epsilon\},\]
    and
    \begin{equation}\label{eq.X_n}
    X_{n+1}(t)=
    \begin{cases*}
      X_n(t) & if $t<\tau_{n+1}$, \\
      \Lambda(X_n)(\tau_{n+1}) & if $t\geq \tau_{n+1}$. 
    \end{cases*}
  \end{equation}
Since the sequence $(\tau_n)_{n \geq 0}$ of stopping times is non-decreasing, it has a possibly infinite limit  $\tau$. As $X_{n+1}(t)=X_n(t)$ for all $t<\tau_{n+1}$ and all $n\in\N$,  we can define a c\`adl\`ag process $X$ on the random interval $[0,\tau)$ by
\begin{equation}\label{eq.X}
    X(t)=X_n(t) \quad \text{for}\;t<\tau_{n+1}.
\end{equation}
The very definition of the sequence $(X_n)_{n \in \mathbb{N}}$ and Lemma \ref{le.Lambda-invariant} together guarantee that $\norm{X(t)-\Lambda(X)(t)}<\epsilon$ for all  $t\in [0,\tau)$. Hence, in order to finish the proof, it suffices to show that $P(\tau=\infty)=1$. To accomplish this, we proceed in steps.

\textit{Step 0:} We prove that, for all $n\geq 0$, 
\[P(\{\tau_{n+1} = \tau_{n}\} \cap \{ \tau_{n}<\infty\})=0.\]
To this end, define $Y(t)=X_{n}(t)-\Lambda(X_{n})(t)$ for $t\ge  0$. Lemma \ref{le.Lambda-invariant} implies $Y(\tau_n)=0$ since $X_{n-1}=X_n$ on $\{t< \tau_n\}$. Consequently, applying Lemma \ref{lemma:stopping_times_increase_1} with $\sigma_0=\tau_n$ and $\sigma_1=\tau_{n+1}$ establishes the claim.

\textit{Step 1:} Next, we prove that $P(X^*(T \wedge \tau)<\infty)=1$  for each fixed $T \geq 0$. 
The dilation theorem \cite[Th. I.8.1]{NAFO} guarantees that there exists a Hilbert space $\Hat{H}$ such that $H \subseteq \Hat{H}$ with an isometric embedding $i:H\rightarrow \hat{H}$ and a group of unitary operators $(\Hat{S}_t)_{t \in \mathbb{R}}$ such that $S(t)=\pi\Hat{S}(t)i$ for all $t \geq 0$, where $\pi:\Hat{H}\rightarrow H$ denotes the orthogonal projection from $\hat{H}$ to $H$. Remark \ref{re.lambda_representation_dilated} implies that for each $t>0$ we have 
\begin{align*}
    &X^{\tau_n \wedge T}(t)\\
    &=\Big(\Lambda\left(X^{\tau_n\wedge T}\right)(t)-\Lambda(0)(t)\Big)+ \Lambda(0)(t)+\Big(X^{\tau_n\wedge T}(t)-\Lambda\left(X^{\tau_n\wedge T}\right)(t)\Big)\\
    &=\pi\hat{S}(t)\bigg(\int_{(0,t]} \Hat{S}(-s)i\left(\drift\left(X^{\tau_n\wedge T}(s)\right)-\drift(0)\right)\,{\rm d}s\\
    &\qquad\qquad\qquad\qquad\qquad\qquad\qquad+\int_{(0,t]} \Hat{S}(-s)i\left(\diff\left(X^{\tau_n \wedge T}(s-)\right)-\diff(0)\right)\,{\rm d}L(s)\bigg)\\
    &\qquad\qquad+\pi\hat{S}(t)\left(ix_0+\int_{(0,t]} \Hat{S}(-s)i\drift(0)\,{\rm d}s+\int_{(0,t]} \Hat{S}(-s)i\diff(0)\,{\rm d}L(s)\right)\\
    &\qquad\qquad+\Big(X^{\tau_n \wedge T}(t)-\Lambda\left(X^{\tau_n \wedge T}\right)(t)\Big).
\end{align*}
Let $(\lambda_n)_{n\in \mathbb{N}}$ be a sequence  of real numbers converging to $0$. Multiplying both sides by 
$\lambda_n$ results in the identity
\begin{equation}\label{eq.approx_ucp_2}
	\lambda_n X^{\tau_n \wedge T}(t)= \pi \Hat{S}(t)\left(\int_{(0,t]} A_n(s)\, {\rm d}s + \int_{(0,t]} B_n(s)\, {\rm d}L(s)\right) + C_n(t),
\end{equation}
where
\begin{align*}
    A_n(s) &:= \lambda_n \Hat{S}(-s)i\Big(\drift\left(X^{\tau_n\wedge T}(s-)\right)-\drift(0)\Big);\\
    B_n(s) &:= \lambda_n \Hat{S}(-s)i\Big(\diff\left(X^{\tau_n \wedge T}(s-)\right)-\diff(0) \Big);\\
    C_n(t) &:= \lambda_n\pi\hat{S}(t)\left(ix_0+\int_{(0,t]} \Hat{S}(-s)i\drift(0)\,{\rm d}s+\int_{(0,t]} \Hat{S}(-s)i\diff(0)\,{\rm d}L(s)\right)\\
    &\qquad+\lambda_n\Big(X^{\tau_n \wedge T}(t)-\Lambda\left(X^{\tau_n \wedge T}\right)(t)\Big).
\end{align*}
Letting $Y_n(t):=\lambda_n X^{\tau_n \wedge T}(t)$, Lipschitz continuity  implies
\begin{align*}
    \norm{A_n(s)}
    \leq \lambda_n  c_{\drift} \norm{X^{\tau_n\wedge T}(s-)}
    = c_{\drift} \norm{Y_n(s-)},
\end{align*}
where $c_{\drift}$ is the Lipschitz constant of $\drift$. By a completely analogous computation for $B_n$, and using that the semigroup $\Hat{S}$ is strongly continuous, we arrive at the estimates
\begin{equation}
A_n^*(t)\leq c_{\drift} Y_n^*(t-) \qquad \text{ and }\qquad  B_n^*(t)\leq c_{\diff} Y_n^*(t-). 
\end{equation}
Since by the definition of $X$ we have 
\[\norm{X^{\tau_n \wedge T}(t)-\Lambda\left(X^{\tau_n \wedge T}\right)(t)}<\epsilon \qquad \text{for all }\, t\geq 0, n\in\N,\]
we obtain that the sequence $(C_n)_{n \in \mathbb{N}}$ converges to $0$ uniformly on compacts in probability. By Proposition \ref{pro.ucp_convergence}, the sequence $(Y_n)_{n\in \mathbb{N}}$  converges to $0$ uniformly on compacts in probability, which further implies that the sequence $(\lambda_n X^{\tau_n \wedge T})_{n\in \mathbb{N}}$ of $H$-valued processes also converges to $0$ uniformly on compacts in probability.  Lemma \ref{le.bounded_in_prob} shows that the collection $(X^*(\tau_n \wedge T))_{n \in \mathbb{N}}$ of random variables is bounded in probability, which means 
\[\lim_{m \rightarrow \infty}\sup_{n \in \mathbb{N}}P(X^*(\tau_n \wedge T)>m)=0.\]
Standard arguments show
\begin{align*}
    P\left(X^*(T \wedge \tau)=\infty\right)
    &=P\left(\bigcap_{m=1}^\infty \bigcup_{n=1}^\infty\left\{ X^*\left(T \wedge \tau_n\right) > m\right\}\right)\\
    &= \lim_{m\rightarrow \infty}\sup_{n \in \mathbb{N}}P\Big(X^*(T\wedge \tau_n)>m\Big)=0.
\end{align*}

\textit{Step 2:}  We complete the proof  by establishing
$P\left(\tau =\infty \right)=1$. 
Fix \(T>0\). Since \(X(t)=X_n(t)\) for all \(t<\tau_{n+1}\) and every \(n\in\mathbb N\), Lemma~\ref{le.Lambda-invariant} yields
\begin{align*}
 X(\omega,\tau_n)
= X_{n}(\omega,\tau_n)
=\Lambda(X_{n-1})(\omega,\tau_n)
=\Lambda(X)(\omega,\tau_n) \qquad {\text{for all } \omega \in \{\tau\leq T\}}.
\end{align*}
Applying Remark \ref{re.lambda_representation_dilated} together with part (c) of Lemma \ref{le.prop_int_proc}, we obtain
\begin{align}\label{eq.aux}
	\begin{split}
{\mathbb{1}_{\{\tau \leq T\}}}\,X(\tau_n)
&=	\mathbb{1}_{\{\tau \leq T\}}\pi \Hat{S}(\tau_n)\bigg(ix_0+\int_{(0,T]}  \mathbb{1}_{(0,\tau_n]}(s)\Hat{S}(-s)i\drift (X(s))\,{\rm d}s\\
&\qquad\qquad+\int_{(0,T]} \mathbb{1}_{(0,\tau_n]}(s)\Hat{S}(-s)i\diff(X(s-))\,{\rm d}L(s)\bigg). 
\end{split}
\end{align}
By Step 1 and Lipschitz continuity, the processes
\[
\bigl(\mathbb 1_{(0,\tau\wedge T]}(t)\,\hat S(-t)i\,\drift(X(t)):\,t\ge0\bigr)
\quad\text{and}\quad
\bigl(\mathbb 1_{(0,\tau\wedge T]}(t)\,\hat S(-t)i\,\diff(X(t-)):\,t\ge0\bigr)
\]
are integrable for every \(T\ge0\); the former in the Lebesgue sense, the latter stochastically with respect to \(L\) by Remark~\ref{re.caglad_is_integrable}. Moreover, the first of these processes dominates the integrand of the first integral in \eqref{eq.aux}, while the $m_L$-functional of the second process dominates the $m_L$-functional of the integrand in the second integral in \eqref{eq.aux}.  Hence, by Lebesgue’s dominated convergence theorem and Theorem~\ref{th_dom_stoch_conv}, we obtain
\begin{align*}
	&\lim_{n\to\infty} {\mathbb{1}_{\{\tau \leq T\}}}\,X(\tau_n)\\
	&\qquad\qquad=
	{\mathbb{1}_{\{\tau \leq T\}}}\,\pi \hat S(\tau)\Bigg(
	ix_0
	+ \int_{(0,T]}
	\mathbb 1_{(0,\tau\wedge T]}(s)\,\hat S(-s)i\,\drift(X(s))\,\mathrm ds \\
	&\qquad\qquad\qquad\qquad\qquad\qquad\qquad\qquad
	+ \int_{(0,T]}
	\mathbb 1_{(0,\tau\wedge T]}(s)\,\hat S(-s)i\,\diff(X(s-))\,\mathrm dL(s)
	\Bigg).
\end{align*}
In particular, the sequence \(\bigl({\mathbb{1}_{\{\tau \leq T\}}}\,X(\tau_n)\bigr)_{n\in\mathbb N}\) converges in probability.

Since \(\tau_n<\tau_{n+1}\) and $X_n$ is constant on $[\tau_n,\infty)$ for all \(n\in\mathbb N\), it follows
 from the definition of \(X\) and \(X_n\) that
\[
\|X(\tau_{n+1})-X(\tau_n)\|\
=\|X_{n+1}(\tau_{n+1})-X_n(\tau_n)\|
=\|\Lambda(X_{n})(\tau_{n+1})-X_n(\tau_{n+1})\|
\ge\varepsilon
\]
$P$-a.s.\ on $\{\tau\le T\}$.
Consequently,
\begin{align*}
(1 \wedge \epsilon)\,P(\tau\le T)
	&\le
	E\!\left[\bigl\|\mathbb 1_{\{\tau\le T\}}\bigl(X(\tau_{n+1})-X(\tau_n)\bigr)\bigr\|\wedge1\right]. 
\end{align*}
Since \(\bigl({\mathbb{1}_{\{\tau \leq T\}}}\,X(\tau_n)\bigr)_{n\in\mathbb N}\) is Cauchy in \(L^0_P(\Omega;H)\), the right side converges to zero as \(n\to\infty\), and hence \(P(\tau\le T)=0\). 

Since \(P(\tau\le T)=0\) for all $T>0$, we conclude \( P(\tau=\infty)=1\).  
%
\end{proof}

\begin{lemma}\label{le.conv_to_solution}
    Let $(X_n)_{n \in \mathbb{N}}$ be a sequence of processes in $D_H$ such that 
    \begin{equation*}\lim_{n\rightarrow \infty}\ducp(X_n,\Lambda(X_n))= 0. 
    \end{equation*}
Then, there exists a process $X \in D_H$ so that 
$$ \lim_{n\rightarrow \infty}\ducp(X_n,X)= 0$$ and $X=\Lambda(X)$.
\end{lemma}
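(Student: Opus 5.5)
The plan is to show that $(X_n)_{n\in\N}$ is Cauchy in $(D_H,\ducp)$, using the stochastic Grönwall result, Proposition~\ref{pro.ucp_convergence}. Completeness of $(D_H,\ducp)$ then provides a limit $X$, and continuity of $\Lambda$ (Lemma~\ref{le.cont_of_lambda}) identifies $X$ as a fixed point.

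\emph{Cauchy property.} It suffices to show that for arbitrary subsequences $(n_k)$, $(m_k)$ tending to infinity we have $X_{n_k}-X_{m_k}\to 0$ in ucp. Write $R_n:=X_n-\Lambda(X_n)$, so that $R_n\to 0$ in ucp by hypothesis. Using the dilation representation of Remark~\ref{re.lambda_representation_dilated}, I would set $Y_k:=X_{n_k}-X_{m_k}$ and decompose
\[
Y_k(t)=\pi\hat S(t)\left(\int_{(0,t]}\drift_k(s)\,\d s+\int_{(0,t]}\diff_k(s)\,\d L(s)\right)+C_k(t),
\]
where
\[
\drift_k(s)=\hat S(-s)i\bigl(\drift(X_{n_k}(s))-\drift(X_{m_k}(s))\bigr),\qquad \diff_k(s)=\hat S(-s)i\bigl(\diff(X_{n_k}(s-))-\diff(X_{m_k}(s-))\bigr),
\]
and $C_k:=R_{n_k}-R_{m_k}$. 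The initial terms $\pi\hat S(t)iX_0$ cancel. Taking $S_k(t):=\pi\hat S(t)$ gives $V=\hat H$ and operator norm at most $1$, so condition (1) of Proposition~\ref{pro.ucp_convergence} holds, and condition (4) holds because $C_k\to 0$ in ucp.

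\emph{Main obstacle.} The difficulty is the domination required in conditions (2)--(3). The bound $\diff_k^*(t)\le c_\diff Y_k^*(t-)$ holds, since $\diff_k$ only involves left limits. For the drift, however, $\drift_k(s)$ involves $X(s)$ rather than $X(s-)$. I would handle this by noting that the Lebesgue integral is unchanged if $X(s)$ is replaced by $X(s-)$, because a càdlàg path has only countably many jumps. This gives a predictable integrand bounded by $c_\drift Y_k^*(s-)$, and hence the bound by $c_\drift Y_k^*(t-)$, exactly as in the proof of Proposition~\ref{pro.approx_solution}. Proposition~\ref{pro.ucp_convergence} then yields $Y_k\to 0$ in ucp. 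By the subsequence argument, this implies that $(X_n)$ is $\ducp$-Cauchy. By completeness there exists $X\in D_H$ with $\ducp(X_n,X)\to 0$.

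\emph{Fixed point.} By Lemma~\ref{le.cont_of_lambda} we have $\Lambda(X_n)\to\Lambda(X)$ in ucp. The triangle inequality then gives
\[
\ducp(X,\Lambda(X))\le \ducp(X,X_n)+\ducp(X_n,\Lambda(X_n))+\ducp(\Lambda(X_n),\Lambda(X))\to 0.
\]
Hence $X$ and $\Lambda(X)$ are indistinguishable, that is, $X=\Lambda(X)$.
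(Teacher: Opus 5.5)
Your proposal is correct and follows essentially the same route as the paper. You write $X_m-X_n$ via the dilation as an expression of the form required by Proposition~\ref{pro.ucp_convergence}, with remainder $D_{m,n}=R_m-R_n$, deduce the Cauchy property from it, and then use completeness of $(D_H,\ducp)$ and continuity of $\Lambda$ (Lemma~\ref{le.cont_of_lambda}) to identify the limit as a fixed point. Your reduction to arbitrary index sequences $(n_k),(m_k)$, and your remark that $X(s)$ may be replaced by $X(s-)$ in the Lebesgue integral, make explicit two steps the paper uses tacitly: it writes $A_{m,n}(s-)$ and lets $m,n\to\infty$ directly.
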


\begin{proof}
It suffices to establish that there exists a process $X \in D_H$ such that the following convergences hold:
\[\Lambda(X)\stackrel{{\rm(i)}}{=}\lim_{n \rightarrow \infty}\Lambda(X_n)\stackrel{{\rm(ii)}}{=}\lim_{n \rightarrow \infty}X_n\stackrel{{\rm(iii)}}{=}X,\]
where the limits are  in the topology of uniform convergence in probability on compact intervals. The fact that ${\rm(ii)}$ holds is a direct consequence of our assumptions, provided at least one of the two limits exists. Hence, it remains only to establish that there exists a process $X \in D_H$ such that ${\rm(i)}$ and ${\rm(iii)}$ both hold.

For each $m,n \in \mathbb{N}$ we introduce the processes
    \begin{align*}
        D_{m,n}&:=X_m-X_n-\Lambda(X_m)+\Lambda(X_n)\\
        A_{m,n}&:=\drift(X_m)-\drift(X_n)\\
        B_{m,n}&:=\diff(X_m)-\diff(X_n).
    \end{align*}
It follows that 
\begin{align*}
    X_m(t)-X_n(t)&= \Lambda(X_m)(t)-\Lambda(X_n)(t)+D_{m,n}(t)\\
    &=\int_{(0,t]} S(t-s)A_{m,n}(s-)\,{\rm d}s+\int_{(0,t]} S(t-s)B_{m,n}(s-)\,{\rm d}L(s)+D_{m,n}(t),
\end{align*}
where $\ducp(D_{m,n},0)\rightarrow 0$ as $m,n \rightarrow \infty$ and 
\begin{align*}
    A_{m,n}^*(s-)&\leq c_\drift (X_m-X_n)^*(s-)\\
    B_{m,n}^*(s-)&\leq c_\diff (X_m-X_n)^*(s-).
\end{align*}
Using an argument analogous to that in Proposition~\ref{pro.approx_solution}, the dilation theorem \cite[Th.~I.8.1]{NAFO} enables us to apply Proposition~\ref{pro.ucp_convergence} and conclude that $X_m-X_n \rightarrow 0$ in probability uniformly on compacts as $m,n \rightarrow \infty$. Since $D_H$ is complete with respect to convergence in probability uniformly on compact intervals, there exists a process $X \in D_H$ such that $X_n \rightarrow X$ in probability uniformly on compacts  as $n \rightarrow \infty$. This establishes the existence of a process $X \in D_H$ for which ${\rm (iii)}$ holds. Finally, combining ${\rm (iii)}$ with Lemma \ref{le.cont_of_lambda} yields  $\Lambda(X_n)\rightarrow \Lambda(X)$ in probability uniformly on compacts as $n \rightarrow \infty$,  which establishes ${\rm (i)}$.
\end{proof}

Lemma \ref{le.conv_to_solution} allows us to approximate the solution of the evolution equation \eqref{eq.equation}. Indeed, the proof of Proposition \ref{pro.approx_solution} explicitly constructs the processes $X_n$ pathwise. Lemma \ref{le.conv_to_solution} then establishes that these processes converge to the solution of \eqref{eq.equation}. This reasoning forms the basis of the following proof of our main result, Theorem \ref{th.existence}.

\begin{proof}[Proof of Theorem \ref{th.existence}]  By Proposition \ref{pro.approx_solution}, for each $n \in \mathbb{N}$ there exists a process $X_n \in D_{H}$ such that $\sup_{t \geq 0}\norm{X_n(t)-\Lambda(X_n)(t)}< 1/n$. Hence, Lemma \ref{le.conv_to_solution} implies that there exists a process $X \in D_H$ such that $X=\Lambda(X)$ and $X_n \rightarrow X$ in probability uniformly on compact intervals as $n \rightarrow \infty$. Thus, $X$ is a fixed point of $\Lambda$ and hence a mild solution of Equation \eqref{eq.equation}.

To establish uniqueness, assume that $X \in D_H$ and $Y \in D_H$ are both mild solutions of Equation \eqref{eq.equation}, that is, $\Lambda(X)=X$ and $\Lambda(Y)=Y$. Define a sequence $(Z_n)_{n \in \mathbb{N}}$ in $D_H$ by
\begin{align*}
    Z_n=
\begin{cases}
X \quad \text{if }\,n \, \text{ is odd};\\
Y \quad \text{if }\,n \, \text{ is even}.
\end{cases}
\end{align*}
It follows from the definition of $(Z_n)_{n \in \mathbb{N}}$ that $Z_n-\Lambda(Z_n)\rightarrow 0$ in probability uniformly on compact intervals as $n \rightarrow \infty$. Thus, by Lemma \ref{le.conv_to_solution}, $(Z_n)_{n \in \mathbb{N}}$ converges in probability uniformly on compact intervals, hence subsequences $(Z_{2n})_{n \in \mathbb{N}}$ and $(Z_{2n-1})_{n \in \mathbb{N}}$ converge in probability uniformly on compact intervals to the same limit. Consequently, we must have that $X=Y$, that is $X$ and $Y$ are indistinguishable. 
\end{proof}

\end{document}